\documentclass[11pt, letterpaper,reqno]{amsart}
\usepackage[left=1in,right=1in,bottom=0.5in,top=0.8in]{geometry}
\usepackage{amsfonts}
\usepackage{amsmath, amssymb}
\usepackage{graphicx}
\usepackage[font=small,labelfont=bf]{caption}
\usepackage{epstopdf}
\usepackage{xcolor}
\usepackage{amsthm}
\usepackage{float}
\usepackage{pgfplots}
\usepackage{listings}
\usepackage{longtable}
\usepackage{mathrsfs}
\usepackage{dsfont}
\usepackage[pdfpagelabels,hyperindex]{hyperref}
\hypersetup{linkbordercolor=green}

\newcommand*{\rom}[1]{\expandafter\@slowromancap\romannumeral #1@}
 \colorlet{lgray}{white!80!black}
\colorlet{lred}{white!85!red}
\colorlet{lgreen}{white!60!green}
\colorlet{dgreen}{black!30!green}
\colorlet{lpurple}{white!60!purple}
\colorlet{lblue}{white!60!blue}
\definecolor{green}{rgb}{0.1,0.8,0.1}
\definecolor{yellow}{rgb}{1.0,0.85,0.25}
\definecolor{purple}{rgb}{1.0, 0, 1.0}
\definecolor{blue}{rgb}{0, 0, 1.0}
\tikzstyle{unfused}=[lgray, line width=1.5pt, ->]
\tikzstyle{fused}=[lgray, line width=4pt, ->]
\tikzstyle{dual}=[black, line width=1pt, dashed]
\tikzstyle{lightdual}=[black, line width=0.5pt, dashed]
\tikzstyle{cut}=[black, line width=1.0pt]

\theoremstyle{plain}
\newtheorem{theorem}{Theorem}[section]

\newtheorem{proposition}[theorem]{Proposition}
\newtheorem{corollary}[theorem]{Corollary}
\theoremstyle{remark}
\newtheorem{definition}[theorem]{Definition}
\newtheorem{remark}[theorem]{Remark}

\newtheorem{fact}[theorem]{Fact}
\newenvironment{claim}[1]{\par\noindent\underline{Claim:}\space#1}{}

\newcommand{\ARXIV}[1]{\href{https://arXiv.org/abs/#1}{arXiv:#1}}

\colorlet{shadecolor}{gray!20}
\pgfplotsset{compat=1.9}
\usetikzlibrary{shapes.multipart}
\usetikzlibrary{patterns}
\usetikzlibrary{shapes.multipart}
\usetikzlibrary{arrows}
\usetikzlibrary{decorations.markings}
\usepgflibrary{decorations.shapes}
\usetikzlibrary{decorations.shapes}
\usepgflibrary{shapes.symbols}
\usetikzlibrary{shapes.symbols}
\usetikzlibrary{decorations.pathreplacing}
\tikzstyle{fleche}=[>=stealth', postaction={decorate}, thick]
\tikzstyle{axis}=[->, >=stealth', thick, gray]
\tikzstyle{paths}=[>->, >=stealth', thick]
\tikzstyle{path}=[->, >=stealth', thick]
\tikzstyle{grille}=[dotted, gray]
\newcommand{\paththick}{\raisebox{-6pt}{\begin{tikzpicture}[scale=0.35]
		\draw[thick] (-1,0) -- (1,0);
		\draw[thick] (0,-1) -- (0,1);
		\end{tikzpicture}}}
\newcommand{\pathdotted}{\raisebox{-6pt}{\begin{tikzpicture}[scale=0.35]
		\draw[dotted] (-1,0) -- (1,0);
		\draw[dotted] (0,-1) -- (0,1);
		\end{tikzpicture}}}
\newcommand{\pathrr}{\raisebox{-6pt}{\begin{tikzpicture}[scale=0.35]
		\draw[thick] (-1,0) -- (1,0);
		\draw[dotted] (0,-1) -- (0,1);
		\end{tikzpicture}}}
\newcommand{\pathru}{\raisebox{-6pt}{\begin{tikzpicture}[scale=0.35]
		\draw[thick] (-1,0) -- (0,0) -- (0,1);
		\draw[dotted] (0,-1) -- (0,0) -- (1,0);
		\end{tikzpicture}}}
\newcommand{\pathuu}{\raisebox{-6pt}{\begin{tikzpicture}[scale=0.35]
		\draw[dotted] (-1,0) -- (1,0);
		\draw[thick] (0,-1) -- (0,1);
		\end{tikzpicture}}}
\newcommand{\pathur}{\raisebox{-6pt}{\begin{tikzpicture}[scale=0.35]
		\draw[dotted] (-1,0) -- (0,0) -- (0,1);
		\draw[thick] (0,-1) -- (0,0) -- (1,0);
		\end{tikzpicture}}}
\newcommand{\pathbrr}{\raisebox{2pt}{\begin{tikzpicture}[scale=0.5]
		\draw[path] (-1,0) -- (0,0);
		\draw[dotted] (0,0) -- (0,1);
		\end{tikzpicture}}}
\newcommand{\pathbuu}{\raisebox{2pt}{\begin{tikzpicture}[scale=0.5]
		\draw[path] (0,0) -- (0,0.1);
		\draw[thick] (0,0) -- (0,1);
		\draw[dotted] (-1,0) -- (0,0);
		\end{tikzpicture}}}
\newcommand{\pathbru}{\raisebox{2pt}{\begin{tikzpicture}[scale=0.5]
		\draw[thick] (-1,0) -- (0,0);
		\draw[thick] (0,0) -- (0,1);
		\end{tikzpicture}}}
\newcommand{\pathbur}{\raisebox{2pt}{\begin{tikzpicture}[scale=0.5]
		\draw[dotted] (0,0) -- (0,1);
		\draw[dotted] (-1,0) -- (0,0);
		\end{tikzpicture}}} 
\newcommand{\pathlyn}{\raisebox{2pt}{\begin{tikzpicture}[scale=0.5]
		\draw[path] (0,-1) -- (0,0);
		\draw[dotted] (0,0) -- (1,0);
		\end{tikzpicture}}}
\newcommand{\pathlny}{\raisebox{2pt}{\begin{tikzpicture}[scale=0.5]
		\draw[dotted] (0,-1) -- (0,0);
		\draw[thick] (0,0) -- (1,0);
            \draw[path] (-0.05,0)--(0,0);
		\end{tikzpicture}}}
\newcommand{\pathlyy}{\raisebox{2pt}{\begin{tikzpicture}[scale=0.5]
		\draw[thick] (0,-1) -- (0,0);
		\draw[thick] (0,0) -- (1,0);
		\end{tikzpicture}}}
\newcommand{\pathlnn}{\raisebox{2pt}{\begin{tikzpicture}[scale=0.5]
		\draw[dotted] (0,-1) -- (0,0);
		\draw[dotted] (0,0) -- (1,0);
		\end{tikzpicture}}}

\DeclareFontFamily{U}{mathx}{}
\DeclareFontShape{U}{mathx}{m}{n}{<-> mathx10}{}
\DeclareSymbolFont{mathx}{U}{mathx}{m}{n}
\DeclareMathAccent{\widehat}{0}{mathx}{"70}
\DeclareMathAccent{\widecheck}{0}{mathx}{"71}
\def\r{\rightarrow}
\def\u{\uparrow}
\def \PP {\mathbb{P}}
\def \z {\mathbb{Z}}
\def \be {\begin{equation}}
\def \ee {\end{equation}}
\def \de {\delta}
\def \ep {\varepsilon}
\def \D {\mathbf{D}}
\def \E {\mathbf{E}}
\def \k {\kappa}
\def \ll {\langle}
\def \rr {\rangle}
\def \lb {\left(}
\def \rb {\right)}
\def \Du {D^\u}
\def \Dr {D^\r}
\def \Eu {E^\u}
\def \Er {E^\r}

\def \k {\kappa}

\def \se{\mathsf{e}}

\def \th {\theta}
\def \hP {\mathcal{P}}
\def \hQ {\mathcal{Q}}
\DeclareMathOperator{\U}{U}
\def \aa {\mathtt{a}}
\def \bb {\mathtt{b}}
\def \cc {\mathtt{c}}
\def \dd {\mathtt{d}}

\def \AA {\mathtt{A}}
\def \BB {\mathtt{B}}
\def \CC {\mathtt{C}}
\def \DD {\mathtt{D}}
\def \tA{\widetilde{A}}
\def \tB{\widetilde{B}}
\def \tC{\widetilde{C}}
\def \tD{\widetilde{D}}
\def \ttA{\tA\sqrt{t}}
\def \ttB{\tB\sqrt{t}}
\def \ttC{\tC/\sqrt{t}}
\def \ttD{\tD/\sqrt{t}}
\def \sr{\sqrt{r}}
\def \zt{\widetilde{Z_N(t)}}
\def \zn{\widetilde{Z_N(1)}}

\def \pt {\partial_t}
\def \po {|_{t=1}}
\def\qp#1{(#1;q)_\infty}
\def\qps#1{(#1;q)}

\newcommand{\upperRomannumeral}[1]{\uppercase\expandafter{\romannumeral#1}}
\DeclareMathOperator{\dehp}{DEHP}
\DeclareMathOperator{\usw}{USW}

\usepgflibrary{fpu}
\makeatletter
\raggedbottom
\let\NAT@parse\undefined
\makeatother

\title{Stationary measure for six-vertex model on a strip}
\author{Zongrui Yang}
\address{Department of Mathematics\\
                Columbia University\\
New York, NY, 10027}
\email{zy2417@columbia.edu}
\begin{document}
\begin{abstract}
We study the stochastic six-vertex model on a strip 
\be\label{eq:strip abstract}\left\{(x,y)\in\z^2: 0\leq y\leq x\leq y+N\right\}\ee with two open boundaries. We develop a `matrix product ansatz' method to solve for its stationary measure, based on the compatibility of three types of local moves of any down-right path. The stationary measure on a horizontal path turns out to be a tilting of the stationary measure of the asymmetric simple exclusion process (ASEP) with two open boundaries. 
Similar to open ASEP, the statistics of this tilted stationary measure as the number of sites $N\rightarrow\infty$ (with the bulk and boundary parameters fixed) also exhibit a phase diagram, which is a tilting of the phase diagram of open ASEP. We study the limit of mean particle density as an example. \end{abstract}
\maketitle
\section{Introduction}
Understanding Kardar–Parisi–Zhang (KPZ) universality is one of the major goals in probability and in statistical physics. Much progress has been made through the study of certain `exactly solvable' models in the KPZ universality class. 
The stochastic six-vertex model (S6V) plays a prominent role among these models, since many other models in this class are its specializations (see Figure $1$ and $2$ in \cite{Kuan_Algebraic}), including the asymmetric simple exclusion process (ASEP). 
While most studies focus on KPZ universality on full space, recent progress has been made towards understanding the KPZ equation on a half-line or an open interval, via studies of certain `integrable' models in the KPZ class with one or two open boundaries. These include studies of  half-space stochastic six-vertex models \cite{S6V_in_half_quadrant,Jimmy_He} and polymer models \cite{Half_space_Macdonald,Imamura_Mucciconi_Sasamoto,Jimmy_He,Barraquand_Corwin}.
The stationary measure of open ASEP on an interval was studied by the `matrix product ansatz' method in \cite{DEHP93,USW04,BW17,BW19,BS19,BYW22} which leads to the construction of stationary measure of open KPZ equation on an interval \cite{Corwin_Knizel,BLD21,BKWW21,BK21}.

 Considering the significance and extensive research on open ASEP, it is natural to ask if one can construct and study a six-vertex model with two open boundaries. An open `staggered' six-vertex model is studied in the physics literature \cite{FG1, FG2} (see also \cite{FS}), which is defined on a strip with two boundaries parallel to the $y$-axis and with certain `$K$-matrices' manually placed at the boundaries. A six-vertex model defined on the same strip with a `U-turn' boundary is studied in \cite{BS22, Z}, where the arrows make a U-turn at the boundary and re-enter the system. The boundaries in these models can be seen as of different natures from the single boundary in the half-space six-vertex model \cite{S6V_in_half_quadrant, Jimmy_He}. In this paper, we introduce and study a stochastic six-vertex model on a strip with two open boundaries: $y=x$ and $y=x-N$, where arrows are allowed to enter or exit the system at the open boundaries. This model can be regarded as a natural counterpart to the half-space six-vertex model (which has one open boundary, $y=x$). To the best of our knowledge, the model in this paper has not been introduced before.

%It is an interesting question to describe the stationary measures of six-vertex models. For the full-space six-vertex model, the `extremal' stationary measures are classified in \cite{Lin} as product Bernoulli measures and certain `blocking measures'. For the half-space six-vertex model, we believe that this question is open. In this paper we will study the stationary measure of six-vertex model on a strip, using the `matrix ansatz' method. 

The stationary measures of six-vertex models and ASEP have been intensively studied since the 70s. In the full-space case, all the `extremal' stationary measures are classified in \cite{Liggett} for ASEP and in \cite{Lin,Amol_limit shape} for the six-vertex model. They are known as product Bernoulli measures and certain `blocking measures.' For the half-space open ASEP, a certain subset of stationary measures have been studied in \cite{Liggett half-space, G, Sasamoto Williams, BW17}. These measures are known to exhibit a phase diagram involving three phases (see \cite[Figure 3.1]{G}). As mentioned in the first paragraph, the stationary measure of open ASEP on an interval was extensively studied by the `matrix product ansatz' method, which admits a phase diagram (see Figure \ref{fig:phase diagram intro} (a)). In this paper, we will study the stationary measure of the six-vertex model on a strip, using the `matrix product ansatz' method.

 We will provide a detailed definition of the six-vertex model on a strip in subsections \ref{subsec:definition of the model} and \ref{subsection:interacting particle system}. Here, we will only offer a brief introduction. 
Our model is defined on the strip \eqref{eq:strip abstract}
%\eqref{eq:strip} 
%\edit{$$\left\{(x,y)\in\z^2: 0\leq y\leq x\leq y+N\right\},$$}
with each edge containing up to one up/right arrow. 
There are initially arrows occupying some outgoing edges of a down-right path $\hP$,
 and we inductively sample through vertex weights:  
 \be\label{bulk_vertex_weights}\begin{gathered}
    \PP\left(\paththick\right) = 1, \quad\PP\left(\pathdotted\right) = 1,\\
    \PP\left(\pathrr\right) = \th_2, \quad\PP\left(\pathru\right) = 1-\th_2,\quad\PP\left(\pathuu\right) = \th_1, \quad\PP\left(\pathur\right) =1-\th_1,
\end{gathered}
\ee
\be\label{left_vertex_weights}
 \PP\left(\pathbrr\hspace{0.2cm}\right) = b,\ \ \PP\left(\pathbuu\hspace{0.2cm}\right) = d,\ \ \PP\left(\pathbru\hspace{0.2cm}\right) = 1-b, \ \ \PP\left(\pathbur\hspace{0.2cm}\right) =1-d,
 \ee
\be\label{right_vertex_weights} 
\PP\left(\pathlyn\hspace{0.2cm}\right)=c,\ \
\PP\left(\pathlny\hspace{0.2cm}\right)=a,\ \ 
\PP\left(\pathlyy\hspace{0.2cm}\right)=1-c,\ \
\PP\left(\pathlnn\hspace{0.2cm}\right)=1-a.\ \ 
\ee  
 where $a,b,c,d$ are boundary parameters and $\th_1,\th_2$ are bulk parameters. See Figure \ref{fig:example6v} for an example of such sampling and Figure \ref{fig:outgoing arrows down-right path} for the outgoing edges of a down-right path. 
We look at the outgoing configurations (i.e. whether the outgoing edges are occupied or not) of all the translated paths $\hP_k=\hP+(k,k)$ for $k\in\mathbb{Z}_{\geq 0}$. 
When we regard $k$ as time then this can be regarded as an interacting particle system whose evolution is governed by the six-vertex model.
There is a standard result (Theorem \ref{thm:scaling limit}) that under a scaling limit, this particle system converges to open ASEP, so it is expected to contain more information than open ASEP. 
We develop a matrix product ansatz method to solve for its stationary measure (Theorem \ref{thm: general matrix ansatz construction}): We begin by prescribing a measure on the outgoing configurations of any down-right path $\hP$ as matrix product states:
$$\mu_{\mathcal{P}}(\tau_1,\dots,\tau_N)=\frac{\ll W|((1-\tau_1)E^{p_1}+\tau_1D^{p_1})\times\dots\times ((1-\tau_N)E^{p_N}+\tau_ND^{p_N})|V\rr}{\ll W|\prod_{i=1}^N(E^{p_i}+D^{p_i})|V\rr},$$
where $p_i\in\left\{\u,\r\right\}$, $1\leq i\leq N$ are outgoing edges of $\hP$ labeled from the up-left of $\hP$ to the down-right of $\hP$, and $\tau_i\in\left\{0,1\right\}$, $1\leq i\leq N$ are occupation variables indicating whether there are arrows on these outgoing edges. 
Then we solve the matrices and vectors $D^\u, D^\r, E^\u, E^\r, \ll W|,|V\rr$ in it using the compatibility of $\mu_\hP$ with three types of local moves  of down-right paths: 
$$
\begin{tikzpicture}[scale=0.6]
		\draw[dotted] (0,0) -- (1,0)--(1,1)--(0,1)--(0,0);
		\draw[very thick] (0,1) -- (0,0) -- (1,0);
		\end{tikzpicture}
  \quad\quad
\raisebox{10pt}{\scalebox{1.5}{$\longmapsto$}}
\quad\quad
\begin{tikzpicture}[scale=0.6]
		\draw[dotted] (0,0) -- (1,0)--(1,1)--(0,1)--(0,0);
		\draw[very thick] (0,1) -- (1,1) -- (1,0);
		\end{tikzpicture},
\quad\quad
\begin{tikzpicture}[scale=0.6]
		\draw[dotted] (0,0) -- (0,1)--(-1,0)--(0,0);
            \draw[very thick] (0,0) -- (-1,0);
		\end{tikzpicture}
  \quad\quad
\raisebox{10pt}{\scalebox{1.5}{$\longmapsto$}}
\quad\quad
\begin{tikzpicture}[scale=0.6]
		\draw[dotted] (0,0) -- (0,1)--(-1,0)--(0,0);
		\draw[very thick] (0,0) -- (0,1);
		\end{tikzpicture}
\quad\quad
\begin{tikzpicture}[scale=0.6]
		\draw[dotted] (0,0) -- (1,0)--(0,-1)--(0,0);
            \draw[very thick] (0,0) -- (0,-1);
		\end{tikzpicture}
  \quad\quad
\raisebox{10pt}{\scalebox{1.5}{$\longmapsto$}}
\quad\quad
\begin{tikzpicture}[scale=0.6]
		\draw[dotted] (0,0) -- (1,0)--(0,-1)--(0,0);
		\draw[very thick] (0,0) -- (1,0);
		\end{tikzpicture}
$$ 
The three sets of compatibility relations \eqref{eq:bulk relations}, \eqref{eq:left boundary relations} and \eqref{eq:right boundary relations} (totally eight relations) coming from local moves look complicated, but they can actually be simplified in Theorem \ref{thm:stationary measure S6V} (after imposing $D^\r=D^\u+I$ and $E^\r=E^\u-I$) to the so-called $\dehp$ algebra 
 $$\D\E-q\E\D=\D+\E,\quad \ll W|(\alpha\E-\gamma\D)=\ll W|,\quad (\beta\D-\de\E)|V\rr=|V\rr.$$ 
%\eqref{eq:DEHP algebra} (Theorem \ref{thm:stationary measure S6V}). 
The $\dehp$ algebra has appeared in the matrix product ansatz solution of stationary measure of open ASEP in the seminal work \cite{DEHP93} by B. Derrida, M. Evans, V. Hakim and V. Pasquier (see subsection \ref{subsec:matrix ansatz for open ASEP} for a review). In particular, when $\hP$ is a horizontal path, then the stationary measure of six-vertex model on a strip is a tilting of the stationary measure of open ASEP:

\begin{figure} 
\centering
\begin{tikzpicture}[scale=0.6]
\draw[dotted] (-0.5,-0.5)--(4.5,4.5);
\draw[dotted] (4.5,-0.5)--(9.5,4.5);
\draw[dotted] (0,0)--(5,0);
\draw[dotted] (1,1)--(6,1);
\draw[dotted] (2,2)--(7,2);
\draw[dotted] (3,3)--(8,3);
\draw[dotted] (4,4)--(9,4);
\draw[dotted] (0,-0.5)--(0,0);
\draw[dotted] (1,-0.5)--(1,1);
\draw[dotted] (2,-0.5)--(2,2);
\draw[dotted] (3,-0.5)--(3,3);
\draw[dotted] (4,-0.5)--(4,4);
\draw[dotted] (5,0)--(5,4.5);
\draw[dotted] (6,1)--(6,4.5);
\draw[dotted] (7,2)--(7,4.5);
\draw[dotted] (8,3)--(8,4.5);
\draw[dotted] (9,4)--(9,4.5);
\draw[very thick] (2,2)--(2,1)--(3,1)--(3,0)--(5,0);
\draw[very thick] (4,4)--(4,3)--(8,3);
\draw[ultra thick,lgray] (2,2)--(3,2);
\draw[ultra thick,lgray] (3,1)--(3,2);
\draw[ultra thick,lgray] (3,1)--(4,1);
\draw[ultra thick,lgray] (4,0)--(4,1);
\draw[ultra thick,lgray] (5,0)--(5,1);
\draw[ultra thick,lgray] (4,4)--(5,4);
\draw[ultra thick,lgray](5,3)--(5,4);
\draw[ultra thick,lgray](6,3)--(6,4);
\draw[ultra thick,lgray](7,3)--(7,4);
\draw[ultra thick,lgray](8,3)--(8,4);
\node at (4,1) {$\bullet$};
\node at (5,1) {$\bullet$};
\node at (6,1) {$\bullet$};
\node at (3,2) {$\bullet$};
\node at (4,2) {$\bullet$};
\node at (5,2) {$\bullet$};
\node at (6,2) {$\bullet$};
\node at (7,2) {$\bullet$};
\node at (3,3) {$\bullet$};
\node at (4,3) {$\bullet$};
\node at (5,3) {$\bullet$};
\node at (6,3) {$\bullet$};
\node at (7,3) {$\bullet$};
\node at (8,3) {$\bullet$};
\node at (4,4) {$\bullet$};
\end{tikzpicture} 
 
\caption{Outgoing edges on down-right paths and set of vertices $\U(\hP,\hQ)$. The lower thick path is $\hP$ and upper thick path is $\hQ$. The gray edges are outgoing edges of $\hP$ and $\hQ$. Outgoing edges of $\hP$ are labelled from the up-left of the path to the down-right of the path: $p_1=\r$, $p_2=\u$, $p_3=\r$, $p_4=\u$, $p_5=\u$. The thick nodes are vertices in $\U(\hP,\hQ)$. }
%We start with a (deterministic) outgoing configuration of $\hP$, inductively sample through all vertices in $\U(\hP,\hQ)$ and get a probability measure on the set of all outgoing configurations of $\hQ$.}
\label{fig:outgoing arrows down-right path}
\end{figure}
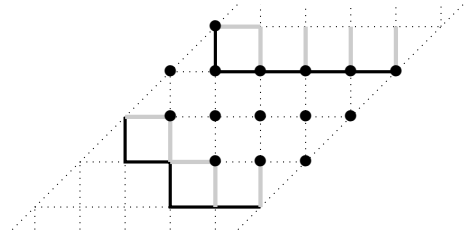

\begin{figure}
\centering
\begin{tikzpicture}[scale=0.6]
\draw[dotted] (-0.5,-0.5)--(4.5,4.5);
\draw[dotted] (4.5,-0.5)--(9.5,4.5);
\draw[dotted] (0,0)--(5,0);
\draw[dotted] (1,1)--(6,1);
\draw[dotted] (2,2)--(7,2);
\draw[dotted] (3,3)--(8,3);
\draw[dotted] (4,4)--(9,4);
\draw[dotted] (0,-0.5)--(0,0);
\draw[dotted] (1,-0.5)--(1,1);
\draw[dotted] (2,-0.5)--(2,2);
\draw[dotted] (3,-0.5)--(3,3);
\draw[dotted] (4,-0.5)--(4,4);
\draw[dotted] (5,0)--(5,4.5);
\draw[dotted] (6,1)--(6,4.5);
\draw[dotted] (7,2)--(7,4.5);
\draw[dotted] (8,3)--(8,4.5);
\draw[dotted] (9,4)--(9,4.5);
\draw[path,thin](2,2)--(5,2)--(5,4);\draw[path,thin](5,3)--(5,4);
\draw[thin](3,1)--(3,2);\draw[thin](5,0)--(5,1); 
\draw[path,thin](8,3)--(8,4);
%\draw[path,thin](4,0)--(4,4);\draw[ultra thick](4,0)--(4,1);
\draw[path,thin](4,0)--(4,1)--(6,1);
\draw[path,thin](4,0)--(4,1)--(5,1)--(5,4);
\draw[path,thin](2,2)--(6,2)--(6,4);
 \draw[path,thin](2,2)--(3,2)--(3,3)--(4,3)--(4,4)--(5,4); 
\end{tikzpicture} 
\caption{Sample configuration of stochastic six-vertex model on a strip. Down-right paths $\hP$ and $\hQ$ are the same as in Figure \ref{fig:outgoing arrows down-right path} and are omitted. }
\label{fig:example6v}
\end{figure}
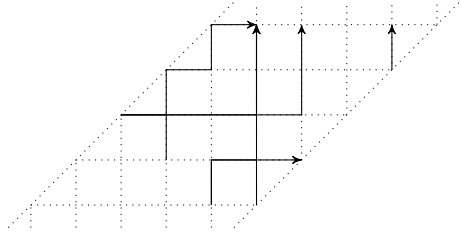

\begin{theorem}\label{thm:main thm tilting}
    Assume that the parameters of six-vertex model on a strip satisfy:
    \be\label{eq:condition Thm 1.1}0<a, b, c, d, \th_1, \th_2<1,\quad \th_1<\th_2,\quad b+d<1.\ee Define:
    \be\label{eq:alphabetagammadelta in intro}(q,\alpha,\beta,\gamma,\de)=\lb\frac{\th_1}{\th_2},\frac{(1-\th_1)a}{\th_2},\frac{(1-\th_2)b}{\th_2(1-b-d)},\frac{(1-\th_2)c}{\th_2},\frac{(1-\th_1)d}{\th_2(1-b-d)}\rb,\ee
and assume $ab/(cd)\notin\{q^l:l=0,1,\dots\}$.
     Let 
$\mu(\tau_1,\dots,\tau_N)$ be the stationary measure of six-vertex model on a strip on a horizontal path (see subsections \ref{subsec:definition of the model} and \ref{subsection:interacting particle system} for its definition). Let
$\pi(\tau_1,\dots,\tau_N)$ be the stationary measure of  open ASEP with particle jump rates $(q,\alpha,\beta,\gamma,\de)$ (see subsection \ref{subsection:interacting particle system} and Figure \ref{fig:openASEP}, where $L=q$ and $R=1$). We have:
\be \label{eq:relation between tau and pi}
\mu(\tau_1,\dots,\tau_N)=r^{\sum_{i=1}^N\tau_i}\pi(\tau_1,\dots,\tau_N)/Z,
\ee
for any $\tau_1,\dots,\tau_N\in\{0,1\}$,
where $r=\frac{1-\th_2}{1-\th_1}\in(0,1)$ and $Z$ is a normalizing constant. 
\end{theorem}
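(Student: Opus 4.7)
The plan is to combine Theorem \ref{thm: general matrix ansatz construction}, which supplies the matrix product ansatz for $\mu$ on any down-right path, with Theorem \ref{thm:stationary measure S6V}, which (after setting $D^\r=D^\u+I$ and $E^\r=E^\u-I$) reduces the eight compatibility relations to the $\dehp$ algebra with parameters \eqref{eq:alphabetagammadelta in intro}. The heart of the argument is then a scalar rescaling of the S6V matrices that produces the Radon--Nikodym factor $r^{\sum_i\tau_i}$.

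First I specialize Theorem \ref{thm: general matrix ansatz construction} to the horizontal path: every outgoing edge points $\u$, so $p_i=\u$ for $1\leq i\leq N$ and the ansatz reads
$$\mu(\tau_1,\dots,\tau_N)=\frac{\ll W|\prod_{i=1}^N\bigl((1-\tau_i)E^\u+\tau_i D^\u\bigr)|V\rr}{\ll W|(D^\u+E^\u)^N|V\rr}.$$
Theorem \ref{thm:stationary measure S6V} supplies matrices and boundary vectors such that, after the rescaling $D^\u=(1-\th_2)\D$ and $E^\u=(1-\th_1)\E$ (forced by matching the bulk quadratic relation $D^\u E^\u-qE^\u D^\u=(1-\th_1)D^\u+(1-\th_2)E^\u$ to the DEHP form $\D\E-q\E\D=\D+\E$), the rescaled triple $\bigl(\D,\E,\ll W|,|V\rr\bigr)$ satisfies the $\dehp$ algebra for the parameters $(q,\alpha,\beta,\gamma,\de)$ in \eqref{eq:alphabetagammadelta in intro}. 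The DEHP93 matrix product formula then represents the open ASEP stationary measure as $\pi(\tau)\propto\ll W|\prod_i\bigl((1-\tau_i)\E+\tau_i\D\bigr)|V\rr$.

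The remaining computation is bookkeeping: for each $i$, since exactly one of the two summands survives depending on $\tau_i\in\{0,1\}$,
$$(1-\tau_i)E^\u+\tau_i D^\u=(1-\th_1)^{1-\tau_i}(1-\th_2)^{\tau_i}\bigl[(1-\tau_i)\E+\tau_i\D\bigr].$$
Multiplying across $i=1,\dots,N$ pulls out a scalar factor $(1-\th_1)^N r^{\sum_i\tau_i}$ with $r=(1-\th_2)/(1-\th_1)\in(0,1)$. Substituting into the ansatz, the $(1-\th_1)^N$ prefactor cancels against the same factor appearing in the denominator after summing over $\tau$, leaving $\mu(\tau)=r^{\sum_i\tau_i}\pi(\tau)/Z$ with $Z=\sum_{\tau\in\{0,1\}^N} r^{\sum_i\tau_i}\pi(\tau)$, as claimed.

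The main obstacle is the middle step, namely establishing the precise form of the S6V bulk relation and verifying that the four boundary coefficients really come out to $(\alpha,\beta,\gamma,\de)$ as in \eqref{eq:alphabetagammadelta in intro} under the rescaling. This is exactly the content of Theorem \ref{thm:stationary measure S6V}, whose proof unpacks the eight compatibility identities arising from the three local moves. The assumptions in \eqref{eq:condition Thm 1.1}---in particular $b+d<1$ together with $\th_1<\th_2$---are what place the rescaled parameters in the regime where the DEHP93 matrix product formula applies, while the genericity condition $ab/(cd)\notin\{q^l:l\geq 0\}$ ensures the existence of nonzero boundary vectors $\ll W|,|V\rr$ solving the DEHP boundary equations.
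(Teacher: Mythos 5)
Your proposal is correct and follows essentially the same route as the paper: specialize the matrix ansatz of Theorem \ref{thm: general matrix ansatz construction} to the horizontal path, invoke Theorem \ref{thm:stationary measure S6V} to reduce the compatibility relations to the $\dehp$ algebra, pull out the scalar ratio $r^{\sum_i\tau_i}$, and apply the DEHP93 formula. Two small imprecisions that do not affect the conclusion: the correct scalars are $D^\u=\frac{1-\th_2}{\th_2}\D$ and $E^\u=\frac{1-\th_1}{\th_2}\E$ (your versions omit the $1/\th_2$, though only the ratio $r=(1-\th_2)/(1-\th_1)$ matters for the tilting), and the condition $ab/(cd)\notin\{q^l\}$ is used to guarantee that the normalizing matrix products $\ll W|(\E+\D)^N|V\rr$ and $\ll W|(\E+r\D)^N|V\rr$ are nonzero via Remark \ref{remark:nonsingular}, not to ensure existence of the boundary vectors (which the USW representation provides for general parameters).
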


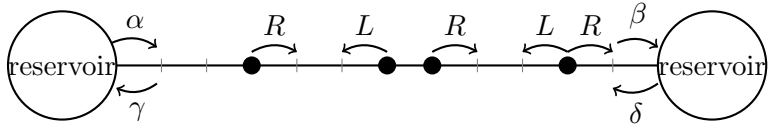
\begin{figure} 
\centering
\begin{tikzpicture}[scale=0.6]
\draw[thick] (-1.2, 0) circle(1.2);
\draw (-1.2,0) node{reservoir};
\draw[thick] (0, 0) -- (12, 0);
\foreach \x in {1, ..., 12} {
	\draw[gray] (\x, 0.15) -- (\x, -0.15);
}
\draw[thick] (13.2,0) circle(1.2);
\draw(13.2,0) node{reservoir};
\fill[thick] (3, 0) circle(0.2);
\fill[thick] (6, 0) circle(0.2);
\fill[thick] (7, 0) circle(0.2);
\fill[thick] (10, 0) circle(0.2);
\draw[thick, ->] (3, 0.3)  to[bend left] node[midway, above]{$R$} (4, 0.3);
\draw[thick, ->] (6, 0.3)  to[bend right] node[midway, above]{$L$} (5, 0.3);
\draw[thick, ->] (7, 0.3) to[bend left] node[midway, above]{$R$} (8, 0.3);
\draw[thick, ->] (10, 0.3) to[bend left] node[midway, above]{$R$} (11, 0.3);
\draw[thick, ->] (10, 0.3) to[bend right] node[midway, above]{$L$} (9, 0.3);
\draw[thick, ->] (-0.1, 0.5) to[bend left] node[midway, above]{$\alpha$} (0.9, 0.4);
\draw[thick, <-] (0, -0.5) to[bend right] node[midway, below]{$\gamma$} (0.9, -0.4);
\draw[thick, ->] (12, -0.4) to[bend left] node[midway, below]{$\de$} (11, -0.5);
\draw[thick, <-] (12, 0.4) to[bend right] node[midway, above]{$\beta$} (11.1, 0.5);
\end{tikzpicture}
\caption{Jump rates in the open ASEP, where we often take $L=q$ and $R=1$.}
\label{fig:openASEP}
\end{figure}

The above theorem will be proved in subsection \ref{subsec:stationary measure of S6V as MPA} as a corollary of the matrix product ansatz (Theorems \ref{thm: general matrix ansatz construction} and \ref{thm:stationary measure S6V}). 
We remark that the above result is surprising to us because it gives a simple relation of stationary measures of two probability systems, and yet we cannot provide a direct probabilistic proof; one must go through the algebraic method of matrix ansatz. 
Under certain special parameter conditions, the general matrix product ansatz (Theorem \ref{thm: general matrix ansatz construction}) also produces the inhomogeneous product Bernoulli and the $q$-volume stationary measures, which will be given in subsection \ref{subsec:stationary measure special cases}. 

%We will give the Bernoulli and $q$-volume stationary measures of six-vertex model on a strip for some special parameters in subsection \ref{subsec:stationary measure special cases}.

We then study the limit of stationary measure of six-vertex model on a horizontal path (the tilted measure \eqref{eq:relation between tau and pi}) as the number of sites $N\rightarrow\infty$, with parameters $a,b,c,d,\th_1,\th_2$ fixed. 
This limit has been well-studied for open ASEP, and it is remarkable that the limits of many statistical quantities under stationary measure exhibit a phase diagram (Figure \ref{fig:phase diagram} (a)) involving only two boundary parameters $A$ and $C$ (see Definition \ref{defn:parameterization open ASEP}), including  mean particle density and density profile \cite{DLS1,SD,USW04} (see also Theorem \ref{thm: open ASEP particle density}), particle current \cite{DEHP93,Sandow,USW04}, correlation functions \cite{ER,UW}, large deviation functionals \cite{DLS1,DLS2,dGE,BW17}, and limit fluctuations around density \cite{DEL,BW19}. See survey papers \cite{D,BE} and more references therein. 
To obtain the phase diagram of six-vertex model on a strip, we study  limits of mean particle density $\rho=\frac{1}{N}\sum_{i=1}^N\tau_i$ under stationary measure \eqref{eq:relation between tau and pi}:
%, using the Askey-Wilson process in \cite{BW10,BW17}. 

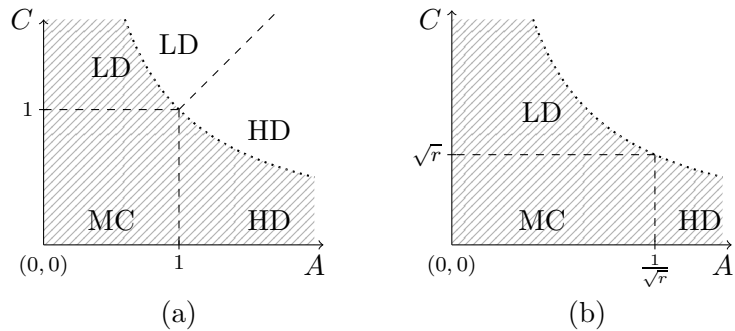
\begin{figure}
\begin{center}
\begin{tikzpicture}[scale=0.6]
\draw[scale = 1,domain=6.8:11,smooth,variable=\x,dotted,thick] plot ({\x},{1/((\x-7)*1/3+2/3)*3+5});
\fill[pattern=north east lines, pattern color=gray!60] (5,5)--(5,10) -- plot [domain=6.8:11]  ({\x},{1/((\x-7)*1/3+2/3)*3+5}) -- (11,5) -- cycle;
 \draw[->] (5,5) to (5,10);
 \draw[->] (5.,5) to (11.2,5);
   \draw[-, dashed] (5,8) to (8,8);
   \draw[-, dashed] (8,8) to (8,5);
   \draw[-, dashed] (8,8) to (10.2,10.2);
   \node [left] at (5,8) {\scriptsize$1$};
   \node[below] at (8,5) {\scriptsize $1$};
     \node [below] at (11,5) {$A$};
   \node [left] at (5,10) {$C$};
  \draw[-] (8,4.9) to (8,5.1);
   \draw[-] (4.9,8) to (5.1,8);
 \node [below] at (5,5) {\scriptsize$(0,0)$};
    \node [above] at (6.5,8.5) {LD};\node [above] at (8,9) {LD};
    %\node[above] at (8,9) {LD};
    \node [below] at (10,6) {HD};  \node [below] at (10,8) {HD}; 
      %\node [above] at (10,9) {HD};
 \node [below] at (6.5,6) {MC};
  \node[below] at (8,4){(a)};
\end{tikzpicture}
\quad\quad
\begin{tikzpicture}[scale=0.6]
\draw[scale = 1,domain=6.8:11,smooth,variable=\x,dotted,thick] plot ({\x},{1/((\x-7)*1/3+2/3)*3+5});
\fill[pattern=north east lines, pattern color=gray!60] (5,5)--(5,10) -- plot [domain=6.8:11]  ({\x},{1/((\x-7)*1/3+2/3)*3+5}) -- (11,5) -- cycle;
 \draw[->] (5,5) to (5,10.2);
 \draw[->] (5.,5) to (11.2,5);
   \draw[-, dashed] (9.5,7) to (5,7);
   \draw[-, dashed] (9.5,7) to (9.5,5);
   \node [left] at (5,7) {\scriptsize$\sqrt{r}$};
   \node[below] at (9.5,5) {\scriptsize $\frac{1}{\sqrt{r}}$};
     \node [below] at (11,5) {$A$};
   \node [left] at (5,10) {$C$};
  \draw[-] (9.5,4.9) to (9.5,5.1);
   \draw[-] (4.9,7) to (5.1,7);
 \node [below] at (5,5) {\scriptsize$(0,0)$};
    \node [above] at (7,7.5) {LD}; 
    %\node[above] at (8,9) {LD};
    \node [below] at (10.5,6) {HD}; 
    %\node [above] at (10,9) {HD};
 \node [below] at (7,6) {MC};
 \node[below] at (8,4){(b)};
\end{tikzpicture}
\end{center}
\caption{(a) Phase diagram for open ASEP. (b) Phase diagram for six-vertex model on a strip.  } 
\label{fig:phase diagram intro}
\end{figure}

 \begin{theorem}\label{thm:main theorem section 3}
Consider the six-vertex model on a strip with bulk parameters $\th_1, \th_2$ and boundary parameters $a, b, c, d$. Assume that the parameters satisfy \eqref{eq:condition Thm 1.1}. 
We will use an alternative parameterization of the system by $(q,r,A,B,C,D)$, where $q=\th_1/\th_2$, $r=(1-\th_2)/(1-\th_1)$, and $A,B,C,D$ are defined in terms of $(q,\alpha,\beta,\gamma,\delta)$ \eqref{eq:alphabetagammadelta in intro} by Definition \ref{defn:parameterization open ASEP}.
Then on the fan region $AC<1$, under the technical condition
\be\label{eq:technical condition}
|A\sqrt{r}|,|B\sqrt{r}|,|C/\sqrt{r}|,|D/\sqrt{r}|\neq q^{-l} \quad\text{ for }  l=0,1,2,\dots
\ee 
the limits of mean particle density are given by:
\be\label{eq:limit of mean density S6V}
\lim_{N\rightarrow\infty}\mathbb{E}_\mu\rho=
\begin{cases}
\frac{\sqrt{r}}{1+\sqrt{r}}, & A\leq 1/\sqrt{r}, C\leq\sqrt{r} \text{ (maximal current phase with boundary)},\\
\frac{Ar}{1+Ar}, & A> 1/\sqrt{r}, AC<1 \text{ (high density phase)},\\
\frac{r}{r+C}, & C>\sqrt{r}, AC<1 \text{ (low density phase)}.
\end{cases}
\ee
When we also assume $a+c<1$ we do not need the technical condition \eqref{eq:technical condition}.
\end{theorem}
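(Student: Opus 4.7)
My plan is to reduce the statement to an asymptotic analysis of matrix elements of the DEHP algebra, in parallel with the analogous density limit for open ASEP (Theorem~\ref{thm: open ASEP particle density}). By Theorem~\ref{thm:main thm tilting}, the tilting factor $r^{\sum_j\tau_j}$ applied to the matrix product ansatz $\pi(\tau)\propto \ll W|\prod_j((1-\tau_j)\E+\tau_j\D)|V\rr$ is equivalent to replacing $\D$ by $r\D$ throughout, so that
\begin{equation*}
\mathbb{E}_\mu[\tau_i] \;=\; \frac{\ll W|(\E+r\D)^{i-1}\,(r\D)\,(\E+r\D)^{N-i}|V\rr}{\ll W|(\E+r\D)^N|V\rr},
\end{equation*}
and the task is to extract the $N\to\infty$ behavior of these matrix elements together with the normalizer $Z_N:=\ll W|(\E+r\D)^N|V\rr$.

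Next, I would use an Askey-Wilson-type integral representation of these DEHP matrix elements, adapted from Uchiyama-Sasamoto-Wadati~\cite{USW04}. For the classical $\ll W|(\D+\E)^N|V\rr$ that representation has the form $\int \lambda(z)^N g(z)\,d\mu_{AW}(z)$ over a contour, with large-$N$ behavior controlled by a saddle point together with a small number of simple poles coming from the boundary factors and located at positions determined by the Askey-Wilson parameters $A,B,C,D$. For the tilted quantity the same derivation applies after one replaces $\D+\E$ by $\E+r\D$, and the net effect on the integrand is that the four Askey-Wilson parameters get rescaled to $A\sqrt r,B\sqrt r,C/\sqrt r,D/\sqrt r$, which is precisely the list appearing in the technical condition~\eqref{eq:technical condition}.

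A steepest-descent-with-pole-collection analysis then distinguishes three regimes inside the fan $AC<1$. When $A\sqrt r\le 1$ and $C/\sqrt r\le 1$, the saddle point on the unit circle dominates, and after forming the ratio of numerator to denominator we obtain the density $\sqrt r/(1+\sqrt r)$. When $A\sqrt r>1$, pushing the contour across the pole at $z=A\sqrt r$ picks up a residue whose contribution is asymptotically dominant and yields $Ar/(1+Ar)$. Symmetrically, when $C/\sqrt r>1$, the pole at $C/\sqrt r$ takes over and gives $r/(r+C)$. In each case the exponential factor cancels between numerator and denominator, and the surviving polynomial/residue prefactors reproduce precisely the three values in~\eqref{eq:limit of mean density S6V}.

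The main obstacle is making the contour analysis rigorous, uniformly throughout each phase and on the phase boundaries, and verifying that the arithmetic mean $\frac1N\sum_i\mathbb{E}_\mu[\tau_i]$ has the same limit as the individual bulk terms (the $O(1)$-many boundary-layer sites near $1$ and $N$ contribute $o(1)$ to the average). A secondary task is dropping the technical condition~\eqref{eq:technical condition} when $a+c<1$: for this I would argue by analytic continuation in the boundary parameters, using the extra room provided by $a+c<1$ to stay clear of the pole loci $q^{-l}$ throughout the deformation, or alternatively switch to a different explicit faithful representation of the DEHP algebra (for instance a $q$-Hermite-type one) in which those loci do not appear. The overall argument parallels the open ASEP density computation with the Askey-Wilson parameters $r$-twisted, and the three regimes produce the phase diagram of Figure~\ref{fig:phase diagram intro}(b).
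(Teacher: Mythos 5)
Your main asymptotic strategy is essentially the paper's in different clothing: the ``Askey--Wilson-type integral representation'' you invoke is realized in the paper as the Askey--Wilson \emph{process} identity of Theorem \ref{thm:askey wilson for open asep} (from \cite{BW17}), which gives $\ll W|(\E+r\D)^N|V\rr\propto\mathbb{E}(1+r+2\sqrt{r}Y_r)^N=\int(1+r+2\sqrt r y)^N\nu(dy;A\sqrt r,B\sqrt r,C/\sqrt r,D/\sqrt r,q)$; your ``saddle point on the unit circle'' is the Laplace analysis of the continuous part of this measure near $y=1$ (an endpoint maximum where the density vanishes like $\sqrt{1-y^2}$, not an interior saddle), and your ``pole collection'' is exactly the contribution of the atoms at $\tfrac12(\chi q^j+(\chi q^j)^{-1})$ for $\chi\in\{A\sqrt r,C/\sqrt r,D/\sqrt r\}$ with $|\chi q^j|>1$. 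The $\sqrt r$-rescaling of the four parameters is identical. The one place you take a genuinely harder route is in computing $\mathbb{E}_\mu[\tau_i]$ site by site: the split matrix element $\ll W|(\E+r\D)^{i-1}(r\D)(\E+r\D)^{N-i}|V\rr$ requires joint (two-time) moments of the Askey--Wilson process, uniformly in $i$, which you correctly flag as ``the main obstacle'' but do not resolve. The paper avoids this entirely via the identity $\mathbb{E}_\mu\rho=\pt Z_N(t)\po/(NZ_N(1))$ with $Z_N(t)=\ll W|(t\Du+\Eu)^N|V\rr$ (Corollary \ref{cor:particle density in terms of askey wilson}), so that only the one-parameter family $Z_N(t)$ and its $t$-derivative at $t=1$ are needed; this is also precisely where the technical condition \eqref{eq:technical condition} enters (it keeps the number of atoms locally constant in $t$ near $1$ so one may differentiate under the integral/sum).

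The genuine gap is your treatment of the last sentence of the theorem. Analytic continuation in the boundary parameters does not remove the technical condition: the finite-$N$ densities $\mathbb{E}_{\mu_N}\rho$ are continuous in $(a,b,c,d)$ and the candidate limit $\rho(A,C)$ is continuous, but pointwise convergence on the complement of the exceptional locus plus continuity of the limit does not imply convergence on the locus itself without some uniformity in $N$ — and uniformity is exactly what fails there, since atoms of the Askey--Wilson measure appear or disappear as a parameter crosses $q^{-l}$. Switching to another representation of the DEHP algebra does not help either, because by Remark \ref{remark:nonsingular} the matrix products are representation-independent; the difficulty is in the measure, not the representation. The paper instead proves a monotone two-species coupling (Proposition \ref{prop:coupling}) showing $\mathbb{E}_{\mu_N}\rho$ is monotone in the boundary parameters, and squeezes the density between approximating parameter values where \eqref{eq:technical condition} holds. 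This is also the only place the hypothesis $a+c<1$ is used (it makes the coupled model's left-boundary weights, e.g.\ $1-c'-a$, nonnegative); your proposal assigns no role to $a+c<1$, which is a sign the proposed mechanism would not deliver the statement as written.
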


The above theorem will be proved in subsection \ref{subsec:proof of limit of mean density}. In the proof we will utilize an auxiliary Markov process known as the Askey-Wilson process, which is developed in the literature on open ASEP stationary measure \cite{BW10,BW17,USW04} (see a brief introduction in subsections \ref{subsec:background AW process} and \ref{subsec:background asymptotics open ASEP}).  Theorem \ref{thm:main theorem section 3} provides the phase diagram (Figure \ref{fig:phase diagram intro} (b)) of six-vertex model on a strip, which is a tilting of phase diagram of open ASEP (Figure \ref{fig:phase diagram intro} (a)). 
The shadowed regions in Figure \ref{fig:phase diagram intro} correspond to the fan regions $AC<1$ in the phase diagrams. At present, it is uncertain whether the phase diagram in Figure \ref{fig:phase diagram intro} (b) can be extended to include the shock region. Nevertheless, a recent work  \cite{WWY} may offer the necessary techniques (see Remark \ref{remark:fan region and extension}). We defer this aspect to future research.

There remains many open questions to investigate, and we list a few of them. 
We expect the density profile and limit fluctuations of stationary measure \eqref{eq:relation between tau and pi} can be studied following techniques in \cite{BW17,BW19}. %One can also study the stationary measure on other down-right paths (for example the zig-zag path), which may exhibit different limiting behaviors and phase diagrams. 
An interesting feature of the model in this paper is that it is a six-vertex model but studied by matrix ansatz coming from open ASEP, and one can ask if the techniques in previous works of six-vertex model (e.g. Bethe ansatz, symmetric functions, etc.) can also be applied.
%Instead of imposing the ASEP-like condition $0<\th_1<\th_2<1$, one can also study asymptotics of the SSEP version ($\th_1=\th_2$) and the TASEP version ($\th_1=0$ or $\th_2=0$) of the model.
Moreover, since the open ASEP can be seen as a specialization of the six-vertex model on a strip (Theorem \ref{thm:scaling limit} and \eqref{eq:relation between tau and pi}), we expect that our model is more flexible in taking scaling limits. In particular, since a scaling limit of open ASEP converges to the open KPZ equation \cite{CS,P}, one expect similar scaling limits of our model.  
%\edit{Moreover, we conjecture that a stationary measure of the half-space six-vertex model (which is not unique) can be obtained by taking a limit of a marginal distribution the stationary measure of six-vertex model on a strip, as width going to infinity, which is analogous to the open ASEP case in }

\section*{Acknowledgement}
The author heartily thank his advisor Ivan Corwin for suggesting this problem, for generous and helpful discussions during this project, and for offering advice on paper writing.
We thank Amol Aggarwal, Jeffrey Kuan and Jimmy He for helpful discussions and Chenyang Zhong for writing a computer simulation. 
The author was supported by Ivan Corwin’s NSF grant DMS-1811143 as well as the Fernholz Foundation’s “Summer Minerva Fellows” program.

\section{The six-vertex model on a strip and stationary measure}\label{section 2}
\subsection{Definition of the model}\label{subsec:definition of the model}
%We start with bulk parameters $\th_1,\th_2\in(0,1)$ and boundary parameters $a,b,c,d\in[0,1]$. 
We consider certain configurations of arrows on edges of the strip:
\begin{equation}\label{eq:strip}
\left\{(x,y)\in\z^2: 0\leq y\leq x\leq y+N\right\},
\end{equation}
where each edge can contain up to one up/right arrow. We refer the vertices $(y,y)$ as left boundary vertices, $(y+N,y)$ as right boundary vertices and all other vertices of the strip as bulk vertices.  
The left and/or bottom edges of each vertex are referred to as its incoming edges, and the right and/or top edges are called its outgoing edges.

We will use the word `down-right path' to mean a path $\mathcal{P}$ that goes from a vertex on the left boundary to a vertex on the right boundary, with each step going downwards or rightwards by $1$. 
%We use $(x,y)\succ\hP$ to denote a vertex $(x,y)$ of the strip being strictly above $\hP$ (see Figure \ref{fig:outgoing arrows down-right path} for all vertices strictly above $\hP$). 
Observe that each down-right path on the strip has length $N$ and  there are exactly $N$ outgoing up/right edges on the path. Each outgoing edge can be occupied by up to one arrow, which gives $2^N$ many `outgoing configurations' of $\hP$.  
Suppose $\mathcal{Q}$ is any down-right path that sits above $\hP$, which may contain edges coinciding with $\hP$. We denote by $\U(\hP,\hQ)$ the set of vertices between $\hP$ and $\hQ$, including those on $\hQ$ but excluding those on $\hP$. See Figure \ref{fig:outgoing arrows down-right path} for an illustration.
 
 Suppose we are given a (deterministic) outgoing configuration of $\hP$. We define inductively a Markovian sampling procedure to generate configurations. Suppose we are at the vertex $(x,y)\in\U(\hP,\hQ)$ and we have sampled through all vertices $(x',y')\in\U(\hP,\hQ)$ such that either $y'<y$ or $y'=y$ and $x'<x$. Then for each incoming edge of $(x,y)$ we have already assigned an arrow or no arrow to it. We sample the outgoing edges of $(x,y)$ according to the probabilities  \eqref{bulk_vertex_weights}, \eqref{left_vertex_weights} and \eqref{right_vertex_weights} respectively, in the cases when $(x,y)$ is a bulk/right boundary/left boundary vertex.
We sequentially sample through all vertices in $\U(\hP,\hQ)$ and obtain a probability measure on the set of all outgoing configurations of $\hQ$.
See Figure \ref{fig:example6v} for an example of this sampling.

\subsection{Interacting particle systems}\label{subsection:interacting particle system}
For a down-right path $\hP$ on the strip, we label its outgoing edges from the up-left of $\hP$ to the down-right of $\hP$ by $p_1,\dots,p_N\in\left\{\u,\r\right\}$ (see Figure \ref{fig:outgoing arrows down-right path}), where $\u$ denotes a vertical edge and $\r$ denotes a horizontal edge (these should not be confused with arrows in the six-vertex model). The $2^N$ `outgoing configurations' of $\hP$ can be encoded in occupation variables $\tau=(\tau_1,\dots,\tau_N)\in\left\{0,1\right\}^N$, where $\tau_i$ indicates whether or not the edge $p_i$ is occupied. Assume $\mathcal{Q}$ is a down-right path that sits above $\hP$, then the sampling procedure in subsection \ref{subsec:definition of the model} can be encoded as a probability transition matrix $P_{\hP,\mathcal{Q}}(\tau,\tau')$, where $\tau,\tau'\in\left\{0,1\right\}^N$ are occupation variables of outgoing edges of $\hP$ and $\hQ$.

\begin{definition}\label{defn:interacting particle system down-right path}
Assume $\hP$ is a down-right path on the strip. Denote by $\hP_k$ the up-right translation of $\hP$ by $(k,k)$, for $k\in\mathbb{Z}_{\geq 0}$.  
We consider a time-homogeneous Markov chain $(\tau(k))_{k\geq 0}$ with some initial outgoing configuration $\tau(0)\in\left\{0,1\right\}^N$ of $\hP$ and the same transition probability matrix $P_{\hP_k,\hP_{k+1}}(\tau,\tau')=P_{\hP,\hP_{1}}(\tau,\tau')$ in each step. 
We regard this Markov chain as a particle system on the lattice $\left\{1,\dots,N\right\}$ where a particle sits at position $i$ at time $k$ if and only if $\tau_i(k)=1$, and we refer it as the interacting particle system of the six-vertex model on a strip on down-right path $\hP$. 
\end{definition}
\begin{remark}
    The sequential update rules of this particle system can be written in a similar way as in the full space case in \cite[subsection 2.2]{BCG}, however more complicated since there are two open boundaries. We will not write the specific update rules since we will not use them.
\end{remark}

We will compare the particle system in Definition \ref{defn:interacting particle system down-right path} to open asymmetric simple exclusion process (ASEP). 
The open ASEP is a continuous-time interacting particle system on the lattice $\left\{1,\dots,N\right\}$, where each site can contain up to one particle. Particles are allowed to move to its nearest left/right neighbor and can also enter or exit the system at two boundary sites $1, N$. Specifically, particles move to the left with rate $L$ and to the right with rate $R$, but a move is prohibited (excluded) if the target site is already occupied. Particles enter the system and are placed at site $1$ with rate $\alpha$ and at site $N$ with rate $\delta$, provided that the site is empty. Particles are also removed with rate $\gamma$ from site $1$ and with rate $\beta$ from site $N$. These jump rates are summarized in Figure \ref{fig:openASEP}.

The following shows that under a scaling limit, the six-vertex model converges to the continuous-time open ASEP. Similar results in the full and half space settings are already obtained in \cite{Amol_Convergence,S6V_in_half_quadrant,Jimmy_He}.
%\cite[section 5]{S6V_in_half_quadrant}, and \cite[section 4]{Jimmy_He}. 
Our situation is simpler than in those settings since there are finitely many sites.
\begin{theorem}\label{thm:scaling limit}
Consider the interacting particle system of six-vertex model on a strip (with parameters $a,b,c,d,\th_1,\th_2$) on a down-right path $\hP$. Scale bulk and boundary parameters $(a,b,c,d,\th_1,\th_2)=(\alpha\ep,\beta\ep,\gamma\ep,\de\ep,L\ep,R\ep)$ and scale time $\eta=[\ep^{-1}t]$. Take $\ep\rightarrow 0$, then the system at time $\eta$ converges weakly to the continuous-time open ASEP at time $t$, on the same lattice $\left\{1,\dots,N\right\}$ with particle jump rates $(\alpha,\beta,\gamma,\de,L,R)$ (see Figure \ref{fig:openASEP}) and with the same initial condition.
\end{theorem}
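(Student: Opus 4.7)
The plan is to show that the one-step transition matrix $P_\ep$ of the discrete-time particle system of Definition \ref{defn:interacting particle system down-right path} admits the expansion $P_\ep = I + \ep G + o(\ep)$ as $\ep \to 0$, where $G$ is the infinitesimal generator of continuous-time open ASEP with jump rates $(\alpha,\beta,\gamma,\delta,L,R)$. Once such an expansion is in place, weak convergence of the time-rescaled chain $(\tau([\ep^{-1}t]))_{t\ge 0}$ to continuous-time open ASEP follows from a standard Markov chain convergence result on the finite state space $\{0,1\}^N$ (e.g.\ Ethier--Kurtz): the matrix power $(I + \ep G + o(\ep))^{[\ep^{-1}t]}$ converges to $e^{tG}$ uniformly for $t$ in compact intervals, yielding convergence of marginal distributions, and weak convergence of the full path process then follows from the Markov property and the same generator estimate.

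To produce the expansion, I first substitute $(a,b,c,d,\th_1,\th_2)=\ep(\alpha,\beta,\gamma,\delta,L,R)$ into the vertex weights \eqref{bulk_vertex_weights}, \eqref{left_vertex_weights} and \eqref{right_vertex_weights}, and expand in $\ep$. At a bulk vertex, the ice-rule cases of zero or two incoming arrows remain deterministic, whereas the cases of a single horizontal (resp.\ vertical) incoming arrow undergo a \emph{corner flip} to the perpendicular outgoing direction with probability $1 - R\ep$ (resp.\ $1 - L\ep$) and pass straight through with probability $R\ep$ (resp.\ $L\ep$). Analogous expansions hold at the left and right boundary vertices: the \emph{default} (probability $1 - O(\ep)$) behavior is again a deterministic rule mirroring the bulk corner flip under the boundary constraints, while small-probability $O(\ep)$ deviations correspond to reservoir injection or ejection events.

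I then trace the sequential sampling through the region $\U(\hP_k,\hP_{k+1})$. At leading order in $\ep^0$, every vertex undergoes its deterministic rule, and the resulting arrow configuration on $\hP_{k+1}$ is exactly the $(1,1)$-translation of the configuration on $\hP_k$; under the natural labeling of outgoing edges this gives $\tau'=\tau$, so $P_\ep(\tau,\tau) = 1 - O(\ep)$ and $P_\ep(\tau,\tau') = O(\ep)$ for $\tau'\neq\tau$. At first order in $\ep$, exactly one vertex in $\U(\hP_k,\hP_{k+1})$ deviates from its leading-order corner flip, and the effect of that single deviation propagates locally to yield a single ASEP move: a bulk non-flip with a horizontal incoming arrow produces a right-hop $\tau\to\tau - e_i + e_{i+1}$ at rate $R$; a non-flip with a vertical incoming arrow produces a left-hop at rate $L$; deviations at the left and right boundary vertices produce injection and ejection at rates $\alpha,\gamma$ on site $1$ and $\delta,\beta$ on site $N$. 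Configurations $\tau'$ reachable only through two or more simultaneous deviations have probability $O(\ep^2)$ and are negligible.

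The main technical subtlety is the boundary analysis: for each small-probability boundary-vertex configuration one must identify the corresponding ASEP boundary move and rate by carefully parsing the four weights $a,b,c,d$ in \eqref{left_vertex_weights}--\eqref{right_vertex_weights}, and check that a single boundary deviation gives exactly a single-site injection or ejection rather than a propagating multi-site effect. A secondary point is that $\hP$ need not be horizontal, so the geometry of $\U(\hP_k,\hP_{k+1})$ depends on the shape of $\hP$; however, since all the expansions above are local to each vertex, the leading-order and first-order contributions extend uniformly, and the resulting limiting generator $G$ is the same continuous-time open ASEP generator on $\{1,\dots,N\}$ regardless of $\hP$.
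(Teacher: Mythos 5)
Your proposal is correct and follows essentially the same route as the paper: both establish the one-step expansion $P_\ep = I + \ep Q + O(\ep^2)$ by identifying the deterministic leading-order ``corner-flip'' (wiggle) dynamics, matching each single $O(\ep)$ deviation (bulk straight pass-through, boundary injection/ejection) to the corresponding ASEP move with the correct rate, and concluding via $(I+\ep Q + o(\ep))^{[\ep^{-1}t]} \to e^{tQ}$ on the finite state space. The paper organizes the bulk bookkeeping through the $N$ up-right zig-zag paths, which is just a convenient phrasing of your ``single deviation propagates locally'' argument, so the two proofs are the same in substance.
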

\begin{proof}
    When $\ep\rightarrow 0$ the arrows in the six-vertex model will essentially always wiggle, i.e. alternate between going up and going right, and will almost never keep going the same direction in two subsequent steps. Hence in each step the interacting particle system will stay put at a probability near $1$.
    Since we are scaling time $\eta=[\ep^{-1}t]$, we want to keep track of all the possible changes to the system that can happen in one step with a probability of order $O(\ep)$. 
    Observe that there are $N$ up-right zig-zag paths, and any down-right path $\hP$ must have exactly one outgoing edge on each of these paths. The arrows will evolve along its own zig-zag path at a probability near $1$, however it will change to its left/right neighboring zig-zag paths when it continues going the same up/right direction in two subsequent steps. At the open boundaries an arrow can eject out of the system or enter the system and then goes along the leftmost/rightmost zig-zag path. 
    By the vertex weights  \eqref{bulk_vertex_weights}, \eqref{left_vertex_weights} and \eqref{right_vertex_weights} we can see that
    exactly one of the following can happen in one step with a probability of order $O(\ep)$:
    \begin{enumerate}
    \item [$\bullet$] If $1$ is unoccupied, a particle can enter the system and placed at $1$ with probability 
    $\alpha\ep+O(\ep^2)$.
    %$\approx\alpha\ep$.
    \item [$\bullet$] If $1$ is occupied by a particle, it can be ejected out of the system with probability
    $\gamma\ep+O(\ep^2)$.
    %$\approx\gamma\ep$.
    \item [$\bullet$] One of the particles that is already in the system can either jump $1$ step left with probability $L\ep+O(\ep^2)$, or $1$ step right with probability $R\ep+O(\ep^2)$, if not blocked by other particle. Particle at $1$ can only jump right and particle at $N$ can only jump left.
    \item [$\bullet$] If $N$ is unoccupied, a particle can enter the system and placed at $N$ with probability $\de\ep+O(\ep^2)$.
    \item [$\bullet$] If $N$ is occupied by a particle, it can be ejected out of the system with probability $\beta\ep+O(\ep^2)$.
\end{enumerate}
    Choose a basis of the state space $\left\{0,1\right\}^N$.
    Denote by $A_{\ep}=P_{\hP,\hP_1}(\tau,\tau')$ the $2^N\times 2^N$ transition matrix of the interacting particle system related to six-vertex model on the down-right path $\hP$, with parameters $(a,b,c,d,\th_1,\th_2)=(\alpha\ep,\beta\ep,\gamma\ep,\de\ep,L\ep,R\ep)$. Denote by $Q$ the infinitesimal generator ($Q$-matrix) of the open ASEP with jump rates $(\alpha,\beta,\gamma,\delta,L,R)$.
    The observation above tells us $A_{\ep}=I+\ep Q+O(\ep^2)$.
    %$\lim_{\ep\rightarrow 0}\frac{A_{\ep}-I}{\ep}=Q$. 
    Hence $\lim_{\ep\rightarrow 0}A_{\ep}^{[\ep^{-1}t]}=e^{tQ}$. 
    The left hand side is the transition probability for the (discrete-time) particle system with parameters $(\alpha\ep,\beta\ep,\gamma\ep,\de\ep,L\ep,R\ep)$ from time $0$ to time $\eta=[\ep^{-1}t]$, and the right hand side is the transition probability for the continuous-time open ASEP from time $0$ to time $t$. Since the initial data are the same we conclude the weak convergence.
\end{proof}

\subsection{Matrix product ansatz of open ASEP}\label{subsec:matrix ansatz for open ASEP}
%In this subsection we recall the `matrix product ansatz' solution of the open ASEP stationary measure, This method was first developed by the seminal works of B. Derrida, M. Evans, V. Hakim and V. Pasquier in \cite{DEHP93} and of T. Sasamoto, M. Uchiyama and M. Wadati in \cite{USW04}, and later developed by W. Bryc and J. Wesolowski in \cite{BW10,BW17}. 
In this subsection we recall the matrix product ansatz solution of the stationary measure of open ASEP first developed in the seminal work \cite{DEHP93} by B. Derrida, M. Evans, V. Hakim and V. Pasquier. See also \cite{Corwin_survey} for a nice survey.

We start with open ASEP on the lattice $\left\{1,\dots,N\right\}$ with particle jump rates given in Figure \ref{fig:openASEP}. We will always assume $L=q$, $R=1$ and 
\be\label{eq:conditions open ASEP}
\alpha,\beta>0,\quad \gamma,\de\geq 0,\quad 0\leq q<1.
\ee 
Under these assumptions the open ASEP is irreducible as a Markov process on the finite state space $\left\{0,1\right\}^N$. We denote by $\pi=\pi(\tau_1,\dots,\tau_N)$ its (unique) stationary measure, where $\tau_1,\dots,\tau_N\in\left\{0,1\right\}$ are occupation variables of $N$ sites.
It is known since \cite{DEHP93} that the stationary measure $\pi$ can be written as the following matrix product:
\begin{theorem}[\cite{DEHP93}]\label{thm:matrix ansatz from DEHP}
 Assume \eqref{eq:conditions open ASEP}. 
 Suppose that there are matrices $\D$, $\E$, a row vector $\ll W|$ and a column vector $|V\rr$ with the same (possibly infinite) dimension, satisfying:
\be \label{eq:DEHP algebra}
     \D\E-q\E\D=\D+\E,\quad \ll W|(\alpha\E-\gamma\D)=\ll W|,\quad (\beta\D-\de\E)|V\rr=|V\rr, 
    \ee 
(which is commonly referred to as the DEHP algebra). Then for any $t_1,\dots,t_N>0$,  
\be \label{eq:MPA open ASEP}
    \mathbb{E}_{\pi}\lb\prod_{j=1}^Nt_j^{\tau_j}\rb=\frac{\ll W|(\E+t_1\D)\times\dots\times(\E+t_N\D)|V\rr}{\ll W|(\E+\D)^N|V\rr},
    \ee 
assuming that the denominator $\ll W|(\E+\D)^N|V\rr$ is nonzero. 

%    For any $t_1,\dots,t_N>0$ one can write 
%    \be \label{eq:MPA open ASEP}
%    \mathbb{E}_{\pi}\lb\prod_{j=1}^Nt_j^{\tau_j}\rb=\frac{\ll W|(\E+t_1\D)\times\dots\times(\E+t_N\D)|V\rr}{\ll W|(\E+\D)^N|V\rr}
%    \ee 
%    for matrices $\D$, $\E$, a row vector $\ll W|$ and a column vector $|V\rr$ with the same (possibly infinite) dimension, satisfying the following compatibility relations:
%    \be \label{eq:DEHP algebra}
%    \edit{\D\E-q\E\D=\D+\E,\quad \ll W|(\alpha\E-\gamma\D)=\ll W|,\quad (\beta\D-\de\E)|V\rr=|V\rr.}
%    \ee  
%    These relations are often referred to as the $\dehp$ algebra.
\end{theorem}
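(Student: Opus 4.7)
The plan is to identify the unnormalized matrix-product expression with a scalar multiple of the stationary measure $\pi$, and then deduce the moment formula by linearity. Set $X(\tau) := (1-\tau)\E + \tau\D$ and $\tilde{P}(\tau) := \ll W|X_1(\tau_1)\cdots X_N(\tau_N)|V\rr$. Under \eqref{eq:conditions open ASEP}, open ASEP is irreducible on the finite state space $\{0,1\}^N$ (since $\alpha,\beta>0$ connect any configuration to the empty one), so $\pi$ is the unique stationary measure and $\ker Q^T$ is one-dimensional. Since $\sum_\tau \tilde{P}(\tau) = \ll W|(\E+\D)^N|V\rr = Z_N \neq 0$ by assumption, it suffices to verify that $\tilde{P}Q = 0$ as a row vector; then $\tilde{P} = Z_N\pi$, and the desired formula follows by linearity from $\sum_\tau \tilde{P}(\tau)\prod_j t_j^{\tau_j} = \ll W|\prod_j(\E+t_j\D)|V\rr$.

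To verify $(\tilde{P}Q)(\tau) = 0$ for each fixed $\tau$, I would decompose the generator as $Q = Q_L + \sum_{i=1}^{N-1}Q_{i,i+1} + Q_R$ and compute each contribution. Using the left boundary relation $\ll W|(\alpha\E - \gamma\D)=\ll W|$, a direct expansion yields $(\tilde P Q_L)(\tau) = (2\tau_1 - 1)\ll W|X_2\cdots X_N|V\rr$; symmetrically, $(\beta\D - \de\E)|V\rr = |V\rr$ gives $(\tilde P Q_R)(\tau) = (1-2\tau_N)\ll W|X_1\cdots X_{N-1}|V\rr$. For each bulk bond, expanding the hop rates ($R=1$ right-jump, $L=q$ left-jump) gives $(\tilde P Q_{i,i+1})(\tau) = (\tau_{i+1}-\tau_i)\ll W|X_1\cdots X_{i-1}(\D\E - q\E\D)X_{i+2}\cdots X_N|V\rr$, which by the bulk relation $\D\E - q\E\D = \D+\E$ simplifies to $(\tau_{i+1}-\tau_i)\ll W|X_1\cdots X_{i-1}(\D+\E)X_{i+2}\cdots X_N|V\rr$.

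These contributions telescope through a simple induction on $i$: combining the left boundary contribution with the first $i-1$ bulk bonds yields $(2\tau_i - 1)\ll W|X_1\cdots X_{i-1}X_{i+1}\cdots X_N|V\rr$ (trivially true at $i=1$). The inductive step rests on the polynomial identity $(2\tau_i-1)X_{i+1} + (\tau_{i+1}-\tau_i)(\D+\E) = (2\tau_{i+1}-1)X_i$, which follows from a one-line computation in $\tau_i,\tau_{i+1} \in \{0,1\}$ using the definition $X(\tau) = (1-\tau)\E + \tau\D$, and requires no further properties of the matrices $\D,\E$. At $i=N$ the accumulated expression equals $(2\tau_N - 1)\ll W|X_1\cdots X_{N-1}|V\rr$, which exactly cancels $(\tilde P Q_R)(\tau)$. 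The main conceptual obstacle is finding the right telescoping identity; once identified, the argument is essentially immediate, and this identity is precisely what sews the three DEHP algebra relations (one bulk plus two boundary) together into stationarity of the matrix-product ansatz.
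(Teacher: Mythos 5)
Your proof is correct. The paper itself does not prove this theorem --- it defers to the original reference \cite{DEHP93} --- so there is no in-paper argument to compare against; your telescoping verification of $\tilde P Q=0$ (left-boundary term feeding through the bulk bonds via the identity $(2\tau_i-1)X_{i+1}+(\tau_{i+1}-\tau_i)(\D+\E)=(2\tau_{i+1}-1)X_i$ and cancelling against the right-boundary term) is precisely the standard ``cancellation mechanism'' from that reference, and all the pieces check out, including the base case, the polynomial identity, and the appeal to irreducibility and uniqueness of the stationary measure. The only point worth flagging is that for infinite-dimensional representations one must assume the matrix products are well-defined and associative, which your argument uses implicitly in the same way the paper does (cf.\ Remark \ref{remark:associativity}).
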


 We refer the reader to \cite{DEHP93} for the proof of this theorem. 
\begin{remark}\label{remark:associativity}
    Here we implicitly assume that all the admissible finite products of the matrices and vectors $\D$, $\E$, $\ll W|$ and $|V\rr$ are well-defined (i.e. convergent if they are infinite dimensional) and satisfy the associativity property. These properties will also be implicitly assumed in the matrix ansatz in Theorem \ref{thm: general matrix ansatz construction}.  
   One can observe that the $\usw$ representation \cite{USW04} of the DEHP algebra (see Remark \ref{remark:USW}) satisfy these properties, since $\D$ and $\E$ are tridiagonal matrices and $\ll W|$ and $|V\rr$ are finitely supported vectors. %The matrices $\Du, \Dr, \Eu, \Er$, given as a linear transform \eqref{eq:substitution} of USW representation, will be shown to solve the matrix ansatz in Theorem \ref{thm: general matrix ansatz construction}. They also satisfy the above properties.
%Its linear transformation \eqref{eq:substitution} (which gives a solution of matrix ansatz in Theorem \ref{thm: general matrix ansatz construction}), also satisfy these properties.

    %A concrete solution of the DEHP algebra known as the $\usw$ representation \cite{USW04} satisfies these properties (see Remark \ref{remark:USW}). } 
\end{remark}  
 \begin{remark}\label{remark:nonsingular}
It has been noted in \cite{ER,MS} that the matrix ansatz \eqref{eq:MPA open ASEP} possibly does not work (i.e. the denominator may equal to zero) when $\alpha\beta=q^l\gamma\delta$ for some $l=0,1,\dots$. 
They are referred to as the `singular' cases of the matrix ansatz, for which an alternative method is developed in \cite{BS19}.
In this paper we only consider the `non-singular' case
\be \label{eq:singular case}
\alpha\beta\neq q^l\gamma\delta\quad\text{for any }l=0,1,\dots.
\ee 
Assume \eqref{eq:conditions open ASEP}, \eqref{eq:singular case} and $\ll W|V\rr>0$, then it can be shown that the matrix products \be\label{eq:matrix products}\ll W|\D^{n_1}\E^{m_1}\dots\D^{n_k}\E^{m_k}|V\rr\ee are strictly positive, for any $k,n_1,m_1,\dots,n_k,m_k\geq 0$. In particular, the denominator $\ll W|(\E+\D)^N|V\rr$ of \eqref{eq:MPA open ASEP} is strictly positive. 
Moreover, the matrix products \eqref{eq:matrix products} only depend on the value of $\ll W|V\rr$, parameters $q,\alpha,\beta,\gamma,\delta$ and numbers $k,n_1,m_1,\dots,n_k,m_k$, and are independent on the specific choices of $\D,\E,\ll W|,|V\rr$ satisfying the $\dehp$ algebra \eqref{eq:DEHP algebra}. See \cite[Appendix A]{MS} for a proof of these facts. 
\end{remark}

\begin{remark}\label{remark:USW}
    It is a highly nontrivial task to find concrete examples of $\D$, $\E$, $\ll W|$ and $|V\rr$ that satisfy the $\dehp$ algebra \eqref{eq:DEHP algebra}. For general parameters $q,\alpha,\beta,\gamma,\delta$, an example was found in the seminal work \cite{USW04} by  M. Uchiyama, T. Sasamoto and M. Wadati, which is often referred to as the $\usw$ representation of the DEHP algebra.  In such an example, $\D$ and $\E$ are infinite tridiagonal matrices with entries closely related to the Jacobi matrices of the Askey-Wilson orthogonal polynomials, $\ll W|=(1,0,0,\dots)$ and $|V\rr=(1,0,0,\dots)^T$. %This example in particular satisfies the properties mentioned in Remark \ref{remark:associativity}.
\end{remark}

\subsection{Stationary measure of six-vertex model on a strip}
\label{subsec:stationary measure of S6V as MPA}
Consider the six-vertex model on a strip with parameters $a,b,c,d,\th_1,\th_2$.
Assume $\hP$ is a down-right path with outgoing edges $p_1,\dots,p_N\in\left\{\u,\r\right\}$, which defines an interacting particle system as in Definition \ref{defn:interacting particle system down-right path}. In this subsection we will always assume
$a, b, c, d, \th_1, \th_2\in(0,1)$
so that this system is irreducible as a Markov process on the finite state space $\left\{0,1\right\}^N$. We denote its unique stationary measure by $\mu_{\mathcal{P}}=\mu_{\mathcal{P}}(\tau_1,\dots,\tau_N)$, where $\tau_1,\dots,\tau_N\in\left\{0,1\right\}$ are occupation variables of $N$ sites. We develop a matrix product ansatz method based on local moves \eqref{eq:local move bulk}, \eqref{eq:local move left boundary} and \eqref{eq:local move right boundary} of down-right paths to solve for the stationary measure $\mu_{\mathcal{P}}$. An interesting feature is that this matrix ansatz ties together the interacting particle systems arising from different down-right paths.
 We then realize in Theorem \ref{thm:stationary measure S6V} that the eight compatibility relations of the matrix ansatz (which arise from local moves) can be reduced to the $\dehp$ algebra. As a corollary, in the special case when $\hP$ is a horizontal path, we prove Theorem \ref{thm:main thm tilting}. 
%In the special case when $\mathcal{P}$ is a horizontal path, $\mu_{\mathcal{P}}$ turns out to be a tilting of the stationary measure of an  open ASEP with a particular set of parameters (Corollary \ref{cor:relation with stationary measure of open ASEP}). This result is actually a surprise to us, since we cannot offer a direct proof and one has to go through the matrix ansatz.

\begin{theorem}\label{thm: general matrix ansatz construction}
    Suppose that there are 
     matrices $D^\u, D^\r, E^\u, E^\r$, row vector $\ll W|$ and column vector $|V\rr$ with the same (possibly infinite) dimension satisfying the following three sets of relations:
\be\label{eq:bulk relations}\begin{gathered}
        D^\u D^\r=D^\r D^\u,\quad E^\u E^\r=E^\r E^\u, \\
       D^\u E^\r=(1-\th_2)D^\r E^\u+\th_1E^\r D^\u, \\
        E^\u D^\r=\th_2D^\r E^\u+(1-\th_1)E^\r D^\u,
\end{gathered}\ee
\be\label{eq:left boundary relations}\begin{gathered}
        \ll W|D^\r=(1-c)\ll W|D^\u+a\ll W|E^\u, \\
       \ll W|E^\r=(1-a)\ll W|E^\u+c\ll W|D^\u,
\end{gathered}\ee
\be\label{eq:right boundary relations}\begin{gathered}
        D^\u|V\rr=(1-b)D^\r|V\rr+dE^\r|V\rr, \\
       E^\u|V\rr=(1-d)E^\r|V\rr+bD^\r|V\rr.
\end{gathered}\ee
 The stationary measure of six-vertex model on a  
strip on down-right path $\hP$  is given by:  
 \be\label{eq: matrix ansatz down-right path}
\mu_{\mathcal{P}}(\tau_1,\dots,\tau_N)=\frac{\ll W|((1-\tau_1)E^{p_1}+\tau_1D^{p_1})\times\dots\times ((1-\tau_N)E^{p_N}+\tau_ND^{p_N})|V\rr}{\ll W|\prod_{i=1}^N(E^{p_i}+D^{p_i})|V\rr},
\ee  
 where $p_1,\dots,p_N\in\left\{\u,\r\right\}$ are the outgoing edges of $\hP$ and $\tau_1,\dots,\tau_N\in\left\{0,1\right\}$ are occupation variables on these edges.
We assume the denominator of \eqref{eq: matrix ansatz down-right path} is nonzero. 
\end{theorem}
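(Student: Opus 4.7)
The strategy is to show that the measure $\mu_\hP$ defined by \eqref{eq: matrix ansatz down-right path} is invariant under the Markov kernel $P_{\hP,\hP_1}$ by decomposing this transition into single-vertex local moves and verifying compatibility of the matrix product measure with each such move.

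First, I would reorganize the sequential sampling defining $P_{\hP,\hP_1}$ as follows. At each step, pick a vertex $v\in\U(\hP,\hP_1)$ whose incoming edges are outgoing edges of the current intermediate down-right path, sample $v$'s outgoing configuration using the appropriate weights from \eqref{bulk_vertex_weights}--\eqref{right_vertex_weights}, and update the intermediate path by swapping $v$'s incoming edges for its outgoing ones. Each such update realizes exactly one of the three local moves depicted earlier: bulk (L-to-$\Gamma$), left-boundary, or right-boundary. Once all vertices of $\U(\hP,\hP_1)$ have been processed, the intermediate path becomes $\hP_1$, so $P_{\hP,\hP_1}$ factorizes as a composition of one-vertex kernels associated to these local moves.

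Next, I would note that $\hP_1=\hP+(1,1)$ has the same ordered sequence of outgoing edge types $(p_1,\dots,p_N)$ as $\hP$, so formula \eqref{eq: matrix ansatz down-right path} gives $\mu_{\hP_1}=\mu_\hP$. Stationarity therefore reduces to showing that, for each local move taking a path $\hP'$ to a path $\hP''$, the corresponding one-vertex kernel pushes $\mu_{\hP'}$ forward to $\mu_{\hP''}$. Since all outgoing edges of $\hP'$ and $\hP''$ away from the local region coincide, the multiplicative structure of \eqref{eq: matrix ansatz down-right path} factors these unchanged edges out and reduces the check to a matrix identity on only the two (bulk) or one (boundary) outgoing edges adjacent to the sampled vertex. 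Enumerating the vertex transition probability $P(\tau\to\tau')$ for each output $\tau'$ using the weights in \eqref{bulk_vertex_weights}, \eqref{right_vertex_weights}, \eqref{left_vertex_weights} then produces the four identities of \eqref{eq:bulk relations}, the two of \eqref{eq:left boundary relations}, and the two of \eqref{eq:right boundary relations} respectively. For instance, in the bulk move the two local outgoing labels switch from $(\r,\u)$ (on the L-corner) to $(\u,\r)$ (on the $\Gamma$-corner); for output $(\tau_1',\tau_2')=(1,0)$ the two contributing inputs are $(0,1)$ with vertex weight $\th_1$ and $(1,0)$ with weight $1-\th_2$, yielding exactly $D^\u E^\r=(1-\th_2)D^\r E^\u+\th_1 E^\r D^\u$.

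Combining these steps, the pushforward of $\mu_\hP$ under $P_{\hP,\hP_1}$ equals $\mu_{\hP_1}=\mu_\hP$; irreducibility of the chain on $\{0,1\}^N$ under the standing assumption $a,b,c,d,\th_1,\th_2\in(0,1)$ then implies that $\mu_\hP$ is its unique stationary measure. The main technical obstacle is the bookkeeping in the third step: correctly reading off the orientation of outgoing edges before and after each local move under the up-left-to-down-right labeling, matching each input-output transition to the correct vertex weight from \eqref{bulk_vertex_weights}--\eqref{right_vertex_weights}, and checking that the eight algebraic identities produced in this way are precisely \eqref{eq:bulk relations}--\eqref{eq:right boundary relations}.
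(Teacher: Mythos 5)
Your proposal is correct and follows essentially the same route as the paper: decompose $P_{\hP,\hP_1}$ into the three types of single-vertex local moves, observe $\mu_{\hP_1}=\mu_\hP$ since the outgoing edge types agree, reduce compatibility with each local move to a local matrix identity (your worked bulk example matches the paper's table), and conclude by irreducibility/Perron--Frobenius. The only cosmetic difference is that the paper states the compatibility claim for an arbitrary path $\hQ$ above $\hP$ before specializing to $\hQ=\hP_1$, which you skip without loss.
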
 
\begin{proof}
Consider the collection of signed measures $\mu_{\hP}$ (with total mass $1$) indexed by down-right paths $\hP$ given by the matrix product states \eqref{eq: matrix ansatz down-right path}. We prove the following:
    \begin{claim}
    The collection $\mu_\hP$ of signed measures for down-right paths $\hP$ is compatible with the evolution of six-vertex model, i.e. for any path $\hQ$ sitting above $\hP$ (which may have coinciding edges),  
    \be\label{eq:transition probability}
    \sum_\tau P_{\hP,\mathcal{Q}}(\tau,\tau')\mu_\hP(\tau)=\mu_{\mathcal{Q}}(\tau'),\quad\forall\tau,\tau'\in\left\{0,1\right\}^N.
    \ee
    \end{claim}
    
    Observe that if we consider the translated path $\hP_1=\hP+(1,1)$, then $\mu_{\hP_1}$ is the same as $\mu_\hP$ as signed measures on $\left\{0,1\right\}^N$, since the outgoing edges of $\hP_1$ are also $p_1,\dots,p_N\in\left\{\u,\r\right\}$ (so that the elements that we put in the matrix product states \eqref{eq: matrix ansatz down-right path} are the same).
    Suppose the above claim holds, we can take $\mathcal{Q}=\hP_1$ and hence $\mu_\hP$ is an eigenvector with eigenvalue $1$ of the transition matrix $P_{\hP,\hP_1}(\tau,\tau')$ of the (irreducible) interacting particle system defined by $\hP$. By Perron-Frobenius theorem $\mu_\hP$ is the unique stationary probability measure of this system. 
    
    We introduce three types of `local moves' of a down-right path, where the thick paths denote locally the down-right path:
\be \label{eq:local move bulk}
\begin{tikzpicture}[scale=0.6]
		\draw[dotted] (0,0) -- (1,0)--(1,1)--(0,1)--(0,0);
		\draw[very thick] (0,1) -- (0,0) -- (1,0);
		\end{tikzpicture}
  \quad\quad
\raisebox{10pt}{\scalebox{1.5}{$\longmapsto$}}
\quad\quad
\begin{tikzpicture}[scale=0.6]
		\draw[dotted] (0,0) -- (1,0)--(1,1)--(0,1)--(0,0);
		\draw[very thick] (0,1) -- (1,1) -- (1,0);
		\end{tikzpicture},
\ee 
\be \label{eq:local move left boundary}
\begin{tikzpicture}[scale=0.6]
		\draw[dotted] (0,0) -- (0,1)--(-1,0)--(0,0);
            \draw[very thick] (0,0) -- (-1,0);
		\end{tikzpicture}
  \quad\quad
\raisebox{10pt}{\scalebox{1.5}{$\longmapsto$}}
\quad\quad
\begin{tikzpicture}[scale=0.6]
		\draw[dotted] (0,0) -- (0,1)--(-1,0)--(0,0);
		\draw[very thick] (0,0) -- (0,1);
		\end{tikzpicture}
\ee 
\be \label{eq:local move right boundary}
\begin{tikzpicture}[scale=0.6]
		\draw[dotted] (0,0) -- (1,0)--(0,-1)--(0,0);
            \draw[very thick] (0,0) -- (0,-1);
		\end{tikzpicture}
  \quad\quad
\raisebox{10pt}{\scalebox{1.5}{$\longmapsto$}}
\quad\quad
\begin{tikzpicture}[scale=0.6]
		\draw[dotted] (0,0) -- (1,0)--(0,-1)--(0,0);
		\draw[very thick] (0,0) -- (1,0);
		\end{tikzpicture}
\ee

We remark that by sequentially performing these local moves, a down-right path $\mathcal{P}$ can be updated to any down-right path $\mathcal{Q}$ sitting above it. See Figure \ref{fig:translation path} for an example of achieving an upper translation of a horizontal path by these local moves. Therefore \eqref{eq:transition probability} can be guaranteed by its special case when $\mathcal{Q}$ is  a local move $\widetilde{\hP}$ of $\hP$:
\be\label{eq:transition probability locally}
    \sum_\tau P_{\hP,\widetilde{\hP}}(\tau,\tau')\mu_\hP(\tau)=\mu_{\widetilde{\hP}}(\tau'),\quad\forall\tau,\tau'\in\left\{0,1\right\}^N.
\ee

As we put matrix product states \eqref{eq: matrix ansatz down-right path} of $\mu_\hP$ and $\mu_{\widetilde{\hP}}$ into \eqref{eq:transition probability locally}, all of the terms coincide except two that went through the local move in the bulk, or one that went through local move at the left/right boundary. As a sufficient condition for \eqref{eq:transition probability locally} to hold, we only need to keep track of the updated terms.
In the following diagrams the thick paths represent locally the down-right paths $\hP$ and $\widetilde{\hP}$, and thin arrows denote locally the outgoing configurations.

\begin{figure}
\centering
\begin{tikzpicture}[scale=0.6]
		\draw[dotted] (-0.5,-0.5)--(1.5,1.5);
            \draw[dotted] (1.5,-0.5)--(3.5,1.5);
            \draw[dotted] (0,0)--(2,0);
            \draw[dotted] (1,1)--(3,1);
            \draw[dotted] (0,-0.5)--(0,0);
            \draw[dotted] (1,-0.5)--(1,1);
            \draw[dotted] (2,0)--(2,1.5);
            \draw[dotted] (3,1)--(3,1.5);
		\draw[very thick] (0,0) -- (2,0);
		\end{tikzpicture}
\raisebox{15pt}{\scalebox{1.5}{$\longmapsto$}}\quad
\begin{tikzpicture}[scale=0.6]
		\draw[dotted] (-0.5,-0.5)--(1.5,1.5);
            \draw[dotted] (1.5,-0.5)--(3.5,1.5);
            \draw[dotted] (0,0)--(2,0);
            \draw[dotted] (1,1)--(3,1);
            \draw[dotted] (0,-0.5)--(0,0);
            \draw[dotted] (1,-0.5)--(1,1);
            \draw[dotted] (2,0)--(2,1.5);
            \draw[dotted] (3,1)--(3,1.5);
		\draw[very thick] (1,1) -- (1,0)--(2,0);
		\end{tikzpicture}
\raisebox{15pt}{\scalebox{1.5}{$\longmapsto$}}\quad 
\begin{tikzpicture}[scale=0.6]
		\draw[dotted] (-0.5,-0.5)--(1.5,1.5);
            \draw[dotted] (1.5,-0.5)--(3.5,1.5);
            \draw[dotted] (0,0)--(2,0);
            \draw[dotted] (1,1)--(3,1);
            \draw[dotted] (0,-0.5)--(0,0);
            \draw[dotted] (1,-0.5)--(1,1);
            \draw[dotted] (2,0)--(2,1.5);
            \draw[dotted] (3,1)--(3,1.5);
		\draw[very thick] (1,1) -- (2,1)--(2,0);
		\end{tikzpicture}
\raisebox{15pt}{\scalebox{1.5}{$\longmapsto$}}\quad 
\begin{tikzpicture}[scale=0.6]
		\draw[dotted] (-0.5,-0.5)--(1.5,1.5);
            \draw[dotted] (1.5,-0.5)--(3.5,1.5);
            \draw[dotted] (0,0)--(2,0);
            \draw[dotted] (1,1)--(3,1);
            \draw[dotted] (0,-0.5)--(0,0);
            \draw[dotted] (1,-0.5)--(1,1);
            \draw[dotted] (2,0)--(2,1.5);
            \draw[dotted] (3,1)--(3,1.5);
		\draw[very thick] (1,1) -- (3,1);
		\end{tikzpicture}
\caption{Upper translation of a horizontal path via three local moves.}
\label{fig:translation path}
\end{figure}
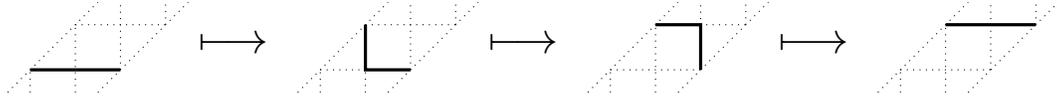
We first consider a bulk local move \eqref{eq:local move bulk}. 
The outgoing edges of $\hP$ and $\widetilde{\hP}$ coincide except for those two edges that went through the local move.  
The following are local terms in the matrix ansatz of $\mu_{\hP}$ on the possible local configurations:
\begin{align*}
\begin{tabular}{|c|c|c|c|} 
\hline 
& & &
\\ 
\quad
\raisebox{-4pt}{\begin{tikzpicture}[scale=0.5]
		\draw[dotted] (0,0)--(2,0);\draw[dotted] (0,1)--(2,1);\draw[dotted] (0,2)--(2,2);
            \draw[dotted] (0,0)--(0,2);\draw[dotted] (1,0)--(1,2);\draw[dotted] (2,0)--(2,2);
		\draw[very thick] (0,1) -- (0,0) -- (1,0);
            \draw[path,thin] (0,1)--(1,1); \draw[path,thin] (1,0)--(1,1);
		\end{tikzpicture}}
\quad
&
\quad
\raisebox{-4pt}{\begin{tikzpicture}[scale=0.5]
		\draw[dotted] (0,0)--(2,0);\draw[dotted] (0,1)--(2,1);\draw[dotted] (0,2)--(2,2);
            \draw[dotted] (0,0)--(0,2);\draw[dotted] (1,0)--(1,2);\draw[dotted] (2,0)--(2,2);
		\draw[very thick] (0,1) -- (0,0) -- (1,0);
            \draw[path,thin] (0,1)--(1,1); 
		\end{tikzpicture}}
\quad
&
\quad
\raisebox{-4pt}{\begin{tikzpicture}[scale=0.5]
		\draw[dotted] (0,0)--(2,0);\draw[dotted] (0,1)--(2,1);\draw[dotted] (0,2)--(2,2);
            \draw[dotted] (0,0)--(0,2);\draw[dotted] (1,0)--(1,2);\draw[dotted] (2,0)--(2,2);
		\draw[very thick] (0,1) -- (0,0) -- (1,0);
  \draw[path,thin] (1,0)--(1,1);
		\end{tikzpicture}}
\quad
&
\quad
\raisebox{-4pt}{\begin{tikzpicture}[scale=0.5]
		\draw[dotted] (0,0)--(2,0);\draw[dotted] (0,1)--(2,1);\draw[dotted] (0,2)--(2,2);
            \draw[dotted] (0,0)--(0,2);\draw[dotted] (1,0)--(1,2);\draw[dotted] (2,0)--(2,2);
		\draw[very thick] (0,1) -- (0,0) -- (1,0);
		\end{tikzpicture}}
  \quad
\\[0.3cm]
\quad
$\Dr\Du$
\quad
& 
\quad
$\Dr\Eu$
\quad
& 
\quad
$\Er\Du$
\quad
&
\quad
$\Er\Eu$
\quad
\\ 
\hline
\end{tabular}
\end{align*}
After sampling through the bulk vertex in the middle, we get local terms of signed measures on  outgoing configurations of $\widetilde{\hP}$ written in the first row, which should match with the local terms in the matrix ansatz of $\mu_{\widetilde{\hP}}$ in the second row:
\begin{align*}
\begin{tabular}{|c|c|c|c|}
\hline 
& & &
\\ 
\quad
\raisebox{-4pt}{\begin{tikzpicture}[scale=0.5]
		\draw[dotted] (0,0)--(2,0);\draw[dotted] (0,1)--(2,1);\draw[dotted] (0,2)--(2,2);
            \draw[dotted] (0,0)--(0,2);\draw[dotted] (1,0)--(1,2);\draw[dotted] (2,0)--(2,2);
		\draw[very thick] (0,1) -- (1,1) -- (1,0);
            \draw[path,thin] (1,1)--(2,1); \draw[path,thin] (1,1)--(1,2);
            \end{tikzpicture}}
\quad
&
\quad
\raisebox{-4pt}{\begin{tikzpicture}[scale=0.5]
		\draw[dotted] (0,0)--(2,0);\draw[dotted] (0,1)--(2,1);\draw[dotted] (0,2)--(2,2);
            \draw[dotted] (0,0)--(0,2);\draw[dotted] (1,0)--(1,2);\draw[dotted] (2,0)--(2,2);
		\draw[very thick] (0,1) -- (1,1) -- (1,0);
           \draw[path,thin] (1,1)--(1,2);
            \end{tikzpicture}}
\quad
&
\quad
\raisebox{-4pt}{\begin{tikzpicture}[scale=0.5]
		\draw[dotted] (0,0)--(2,0);\draw[dotted] (0,1)--(2,1);\draw[dotted] (0,2)--(2,2);
            \draw[dotted] (0,0)--(0,2);\draw[dotted] (1,0)--(1,2);\draw[dotted] (2,0)--(2,2);
		\draw[very thick] (0,1) -- (1,1) -- (1,0);
            \draw[path,thin] (1,1)--(2,1); 
            \end{tikzpicture}}
\quad
&
\quad
\raisebox{-4pt}{\begin{tikzpicture}[scale=0.5]
		\draw[dotted] (0,0)--(2,0);\draw[dotted] (0,1)--(2,1);\draw[dotted] (0,2)--(2,2);
            \draw[dotted] (0,0)--(0,2);\draw[dotted] (1,0)--(1,2);\draw[dotted] (2,0)--(2,2);
		\draw[very thick] (0,1) -- (1,1) -- (1,0);
            \end{tikzpicture}}
  \quad
\\[0.3cm]
\quad
$\Dr\Du$
\quad
& 
\quad
$(1-\th_2)D^\r E^\u+\th_1E^\r D^\u$
\quad
& 
\quad
$(1-\th_1)E^\r D^\u+\th_2D^\r E^\u$
\quad
&
\quad
$\Er\Eu$
\quad
\\ [0.1cm]
\quad 
$\Du\Dr$
\quad
& 
\quad
$\Du\Er$
\quad
& 
\quad
$\Eu\Dr$
\quad
&
\quad
$\Eu\Er$
\quad
\\ 
\hline
\end{tabular}
\end{align*}
This give us the bulk compatibility relations \eqref{eq:bulk relations}. 

In a left boundary local move \eqref{eq:local move left boundary}, the outgoing edges of $\hP$ and $\widetilde{\hP}$ coincide except for the leftmost edges. 
Here are the leftmost terms in the matrix ansatz of $\mu_{\hP}$ on possible local configurations:
\begin{align*}
\begin{tabular}{|c|c|}
\hline 
& 
\\ 
\quad
\raisebox{-4pt}{\begin{tikzpicture}[scale=0.6]
		\draw[dotted] (0,0) -- (1,1);\draw[dotted] (0,0) -- (1,0);
          \draw[dotted] (1,0)--(2,0)--(2,1)--(1,1)--(1,0);
            \draw[very thick] (0,0) -- (1,0);
            \draw[path,thin] (1,0)--(1,1);
		\end{tikzpicture}}
\quad
&
\quad
\raisebox{-4pt}{\begin{tikzpicture}[scale=0.6]
		\draw[dotted] (0,0) -- (1,1);\draw[dotted] (0,0) -- (1,0);
          \draw[dotted] (1,0)--(2,0)--(2,1)--(1,1)--(1,0);
            \draw[very thick] (0,0) -- (1,0);
		\end{tikzpicture}}
  \quad
\\[0.3cm]
$\ll W|\Du$
\quad
& 
\quad
$\ll W|\Eu$
\quad
\\ 
\hline
\end{tabular}
\end{align*}
After sampling through the left boundary vertex, we get the leftmost terms of signed measures on $\widetilde{\hP}$ in the first row, which should match leftmost terms in the matrix ansatz of $\mu_{\widetilde{\hP}}$ in second row:
\begin{align*}
\begin{tabular}{|c|c|}
\hline 
& 
\\
\quad
\raisebox{-4pt}{\begin{tikzpicture}[scale=0.6]
		\draw[dotted] (0,0) -- (1,1);\draw[dotted] (0,0) -- (1,0);
          \draw[dotted] (1,0)--(2,0)--(2,1)--(1,1)--(1,0);
            \draw[very thick] (1,0) -- (1,1);
            \draw[path,thin] (1,1)--(2,1);
		\end{tikzpicture}}
\quad
&
\quad
\raisebox{-4pt}{\begin{tikzpicture}[scale=0.6]
		\draw[dotted] (0,0) -- (1,1);\draw[dotted] (0,0) -- (1,0);
          \draw[dotted] (1,0)--(2,0)--(2,1)--(1,1)--(1,0);
            \draw[very thick] (1,0) -- (1,1);
		\end{tikzpicture}}
\quad
\\[0.3cm]
\quad
$(1-c)\ll W|\Du+a\ll W|\Eu$
\quad
& 
\quad
$(1-a)\ll W|\Eu+c\ll W|\Du$
\quad
\\[0.1cm]
\quad
$\ll W|\Dr$
\quad
& 
\quad
$\ll W|\Er$
\quad\\
\hline
\end{tabular}
\end{align*}
This gives us the left boundary compatibility relations \eqref{eq:left boundary relations}. 
The right boundary compatibility relations \eqref{eq:right boundary relations} can be obtained similarly.
\end{proof}

The compatibility relations \eqref{eq:bulk relations}, \eqref{eq:left boundary relations} and \eqref{eq:right boundary relations} 
look complicated, but in fact they can be reduced to the $\dehp$ algebra \eqref{eq:DEHP algebra} after imposing some simple (additional) relations \eqref{eq:additional assumptions}. 
\begin{theorem}\label{thm:stationary measure S6V}
Assume  $b+d<1$.
 Suppose $\D$ and $\E$ are matrices, $\ll W|$ is a row vector and $|V\rr$ is a column vector with the same (possibly infinite) dimension, satisfying the $\dehp$ algebra \eqref{eq:DEHP algebra}:
    \begin{equation*} 
        \D\E-q\E\D=\D+\E, \quad
        \ll W|(\alpha\E-\gamma\D)=\ll W|, \quad
        (\beta\D-\de\E)|V\rr=|V\rr,\end{equation*} 
    with parameters:
    \be \label{eq: abcd to alphabetathetatheta}
    (q,\alpha,\beta,\gamma,\de)=\lb\frac{\th_1}{\th_2},\frac{(1-\th_1)a}{\th_2},\frac{(1-\th_2)b}{\th_2(1-b-d)},\frac{(1-\th_2)c}{\th_2},\frac{(1-\th_1)d}{\th_2(1-b-d)}\rb.
    \ee
    Then the matrices
    \be \label{eq:substitution}
D^\u=\frac{(1-\th_2)}{\th_2}\D,\quad E^\u=\frac{(1-\th_1)}{\th_2}\E,\quad D^\r=D^\u+I,\quad E^\r=E^\u-I,
\ee
together with boundary vectors $\ll W|$, $|V\rr$ satisfy the compatibility relations \eqref{eq:bulk relations}, \eqref{eq:left boundary relations} and \eqref{eq:right boundary relations}. 
%Hence by Theorem \ref{thm: general matrix ansatz construction} the stationary measure of the six-vertex model (with parameters $a,b,c,d,\th_1,\th_2$) on a strip on a down-right path $\hP$ can be written as the matrix product state \eqref{eq: matrix ansatz down-right path} with these m.
\end{theorem}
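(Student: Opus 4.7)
The plan is to verify each of the eight compatibility relations \eqref{eq:bulk relations}, \eqref{eq:left boundary relations}, \eqref{eq:right boundary relations} by direct substitution of \eqref{eq:substitution}, using the DEHP algebra \eqref{eq:DEHP algebra} with the parameter identification \eqref{eq: abcd to alphabetathetatheta}. There is no deep step here, just algebra, but the key observation to exploit is that the eight relations actually collapse to only three independent equations after the substitution, and each of these three matches exactly one of the three DEHP relations.

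First I would dispose of the two commutation relations $D^\u D^\r = D^\r D^\u$ and $E^\u E^\r = E^\r E^\u$. Since $D^\r - D^\u = I$ and $E^\r - E^\u = -I$ are scalar, these hold trivially.

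Next I would handle the two nontrivial bulk relations. Expanding $D^\u E^\r = D^\u E^\u - D^\u$ and $(1-\th_2)D^\r E^\u + \th_1 E^\r D^\u = (1-\th_2)D^\u E^\u + (1-\th_2)E^\u + \th_1 E^\u D^\u - \th_1 D^\u$, and equating, one finds after rearrangement
\[
\th_2 D^\u E^\u - \th_1 E^\u D^\u = (1-\th_1)D^\u + (1-\th_2)E^\u.
\]
A parallel computation starting from $E^\u D^\r = \th_2 D^\r E^\u + (1-\th_1)E^\r D^\u$ yields exactly the same equation. Substituting $D^\u = \tfrac{1-\th_2}{\th_2}\D$ and $E^\u = \tfrac{1-\th_1}{\th_2}\E$, the left-hand side becomes $\tfrac{(1-\th_1)(1-\th_2)}{\th_2}(\D\E - q\E\D)$ with $q = \th_1/\th_2$, and the right-hand side becomes $\tfrac{(1-\th_1)(1-\th_2)}{\th_2}(\D+\E)$. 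These agree by the bulk DEHP relation $\D\E - q\E\D = \D + \E$.

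For the left boundary, both \eqref{eq:left boundary relations} reduce, after cancellation of the $\pm I$ terms produced by $D^\r = D^\u + I$ and $E^\r = E^\u - I$, to the single equation $a\ll W|E^\u - c \ll W|D^\u = \ll W|$. Substituting \eqref{eq:substitution} gives $\ll W|\bigl(\tfrac{a(1-\th_1)}{\th_2}\E - \tfrac{c(1-\th_2)}{\th_2}\D\bigr) = \ll W|$, which is the left DEHP relation $\ll W|(\alpha\E - \gamma\D) = \ll W|$ under the parameter identification \eqref{eq: abcd to alphabetathetatheta}. Symmetrically, both relations in \eqref{eq:right boundary relations} reduce to $bD^\u|V\rr - dE^\u|V\rr = (1-b-d)|V\rr$; substituting \eqref{eq:substitution} and dividing through by $1-b-d$ (which is nonzero by assumption) yields $\bigl(\tfrac{b(1-\th_2)}{\th_2(1-b-d)}\D - \tfrac{d(1-\th_1)}{\th_2(1-b-d)}\E\bigr)|V\rr = |V\rr$, matching the right DEHP relation $(\beta\D - \de\E)|V\rr = |V\rr$.

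There is no serious obstacle; the only thing to watch is the bookkeeping: checking that the two bulk nontrivial relations really do collapse to the same commutator identity (rather than giving two independent constraints on $\D$ and $\E$), and likewise for each boundary pair. The role of the hypothesis $b + d < 1$ is precisely to make the division by $1-b-d$ in defining $\beta,\delta$ legitimate, so that the right-boundary relation can be recast in DEHP form.
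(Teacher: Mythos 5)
Your proposal is correct and follows essentially the same route as the paper's proof: substituting $D^\r=D^\u+I$, $E^\r=E^\u-I$ collapses the eight compatibility relations to the three equations $\th_2D^\u E^\u-\th_1E^\u D^\u=(1-\th_2)E^\u+(1-\th_1)D^\u$, $\ll W|=\ll W|(aE^\u-cD^\u)$, $(1-b-d)|V\rr=(bD^\u-dE^\u)|V\rr$, which the rescaling \eqref{eq:substitution} turns into the $\dehp$ algebra with parameters \eqref{eq: abcd to alphabetathetatheta}. The bookkeeping you carry out (each bulk pair and each boundary pair reducing to a single identity, and $b+d<1$ licensing the division by $1-b-d$) is exactly what the paper does, only written out in slightly more detail.
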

\begin{proof}
We take  
\be\label{eq:additional assumptions} D^\r=D^\u+I,\quad E^\r=E^\u-I\ee
into compatibility relations \eqref{eq:bulk relations}, \eqref{eq:left boundary relations} and \eqref{eq:right boundary relations}. They simplify to  three equations:
%We make additional assumptions:
%\be\label{eq:additional assumptions}
%D^\r=D^\u+I,\quad E^\r=E^\u-I,
%\ee
%then the eight compatibility relations \eqref{eq:bulk relations}, \eqref{eq:left boundary relations} and \eqref{eq:right boundary relations} simplify to the following three equations:
\begin{equation}\label{eq:in the proof similar to DEHP}
    \begin{gathered}
        \th_2D^\u E^\u-\th_1E^\u D^\u=(1-\th_2)E^\u+(1-\th_1)D^\u,\\
        \ll W|=\ll W|(aE^\u-cD^\u),\\
        (1-b-d)|V\rr=(bD^\u-dE^\u)|V\rr,
    \end{gathered}
\end{equation}
which look similar to the $\dehp$ algebra \eqref{eq:DEHP algebra}. In fact, if we set the parameters $(q,\alpha,\beta,\gamma,\de)$ as \eqref{eq: abcd to alphabetathetatheta} and make the substitution
$$D^\u=\frac{(1-\th_2)}{\th_2}\D,\quad E^\u=\frac{(1-\th_1)}{\th_2}\E,$$
then \eqref{eq:in the proof similar to DEHP} become exactly the $\dehp$ algebra \eqref{eq:DEHP algebra}. Hence if we start with a solution $\D,\E,\ll W|,|V\rr$ of the $\dehp$ algebra with parameters $(q,\alpha,\beta,\gamma,\delta)$ as \eqref{eq: abcd to alphabetathetatheta} then the matrices $D^\u,D^\r,E^\u,E^\r$ as \eqref{eq:substitution} and the same boundary vectors $\ll W|$, $|V\rr$ satisfy compatibility relations \eqref{eq:bulk relations}, \eqref{eq:left boundary relations} and \eqref{eq:right boundary relations}.
\end{proof}
%\begin{rmk}
    %When we take the scaling limit 
    %$$
    %(a,b,c,d,\th_1,\th_2)=(\tilde{\alpha}\ep,\tilde{\beta}\ep,\tilde{\gamma}\ep,\tilde{\de}\ep,L\ep,R\ep),\quad\ep\rightarrow 0.
    %$$
    %Note that $L=q$, $R=1$, hence the parameters $(q,\alpha,\beta,\gamma,\de)$ in \eqref{eq: abcd to alphabetathetatheta} goes to $(q,\tilde{\alpha},\tilde{\beta},\tilde{\gamma},\tilde{\de})$. Moreover $r=\frac{1-\th_2}{1-\th_1}\rightarrow 1$. Hence the stationary measure $\mu$ of the six-vertex model in \eqref{eq:relation between tau and pi} goes to the stationary measure $\pi$ of the open ASEP. This is compatible with the scaling limit in Proposition \ref{prop:scaling limit}.
%\end{rmk}

%\begin{corollary}\label{cor:relation with stationary measure of open ASEP}
%Assume 
%$a, b, c, d, \th_1, \th_2\in(0,1)$, 
%$\th_1<\th_2$, $b+d<1$.
%Take $\hP$ to be the horizontal path $(0,0)\rightarrow(N,0)$ and denote the stationary measure by $\mu=\mu_\hP$. Consider the open ASEP on lattice $\left\{1,\dots,N\right\}$ with jump rates $(q,\alpha,\beta,\gamma,\de)$ given in \eqref{eq: abcd to alphabetathetatheta} and stationary measure $\pi$. We have:
%    \be\label{eq:relation between tau and pi}
%    \mu(\tau_1,\dots,\tau_N)=r^{\sum_{i=1}^N\tau_i}\pi(\tau_1,\dots,\tau_N)/Z,
%    \ee 
%    for any $(\tau_1,\dots,\tau_N)\in\left\{0,1\right\}^N$, where 
%$r=\frac{1-\th_2}{1-\th_1}\in(0,1)$
%and the normalizing constant $Z$ is given by
%    \be\label{eq:normalizing constant}
%    Z=\sum_{(\tau_1,\dots,\tau_N)\in\left\{0,1\right\}^N}r^{\sum_{i=1}^N\tau_i}\pi(\tau_1,\dots,\tau_N).
%    \ee 
%\end{corollary}

 We now provide the proof of Theorem \ref{thm:main thm tilting} in the introduction. 
\begin{proof}[Proof of Theorem \ref{thm:main thm tilting}]
Recall that by Theorem \ref{thm:stationary measure S6V}, a solution of the compatibility relations in Theorem \ref{thm: general matrix ansatz construction} can be given by a solution of the $\dehp$ algebra. 
 Conditions \eqref{eq:condition Thm 1.1} guarantee that $(q,\alpha,\beta,\gamma,\delta)$ given by \eqref{eq: abcd to alphabetathetatheta} above satisfy the usual constraints \eqref{eq:conditions open ASEP} of open ASEP. 
When $\hP$ is a horizontal path we have $p_1=\dots=p_N=\uparrow$. Since
      $$D^\u=\frac{(1-\th_2)}{\th_2}\D,\quad E^\u=\frac{(1-\th_1)}{\th_2}\E.$$
      The stationary measure $\mu$ can be written as
\be \label{eq:in proof of main thm}
\begin{split}
    \mu(\tau_1,\dots,\tau_N)&=\frac{\ll W|((1-\tau_1)\Eu+\tau_1\Du)\times\dots\times ((1-\tau_N)\Eu+\tau_N\Du)|V\rr}{\ll W|\prod_{i=1}^N(\Eu+\Du)|V\rr}\\
    &=r^{\sum_{i=1}^N\tau_i}\frac{\ll W|((1-\tau_1)\E+\tau_1\D)\times\dots\times ((1-\tau_1)\E+\tau_1\D)|V\rr}{\ll W|\prod_{i=1}^N(\E+r\D)|V\rr}\\
    &=r^{\sum_{i=1}^N\tau_i}\pi(\tau_1,\dots,\tau_N)/Z,
\end{split}
\ee
for $r=\frac{1-\th_2}{1-\th_1}$ and normalizing constant $Z$ given by:
 
$$Z=\sum_{(\tau_1,\dots,\tau_N)\in\left\{0,1\right\}^N}r^{\sum_{i=1}^N\tau_i}\pi(\tau_1,\dots,\tau_N).$$
 
We used Theorem \ref{thm:matrix ansatz from DEHP} in the last step  of \eqref{eq:in proof of main thm}.
The assumption $ab/(cd)\notin\{q^l:l=0,1,\dots\}$ is equivalent to the `non-singular' condition \eqref{eq:singular case} of $(q,\alpha,\beta,\gamma,\delta)$, which, by Remark \ref{remark:nonsingular}, guarantees that the denominator of \eqref{eq:in proof of main thm} above is nonzero.
 
\end{proof}
\begin{remark}
    In Theorem \ref{thm:main thm tilting}  %Corollary \ref{cor:relation with stationary measure of open ASEP} 
    and in section \ref{sec 3}  we only consider the case when $\th_1<\th_2$.
    When $\th_1>\th_2$ we can still get the stationary measure by a standard particle-hole duality argument. More precisely, when we swap the parameters in the six-vertex model:
    $$ 
    \th_1\longleftrightarrow\th_2,\quad a\longleftrightarrow c,\quad b\longleftrightarrow d,
    $$
    any edge equipped with $\tau\in\left\{0,1\right\}$ arrow becomes equipped with $1-\tau$ arrow. Hence in the   particle systems particles become holes and holes become particles. The stationary measures are related by
     $\mu(\tau_1,\dots,\tau_N)=\nu(1-\tau_1,\dots,1-\tau_N),$  
    for any $(\tau_1,\dots,\tau_N)\in\left\{0,1\right\}^N$.
\end{remark}
\subsection{Bernoulli and \texorpdfstring{$q$}{}-volume stationary measures in special cases}
\label{subsec:stationary measure special cases}
Based on the general matrix product solution of the stationary measure on a down-right path in Theorem \ref{thm: general matrix ansatz construction}, we obtain the Bernoulli and the $q$-volume stationary measures in some special cases. In these cases we do not necessarily have $a,b,c,d\in(0,1)$ so the stationary measure may not be unique.
\begin{corollary}
    Suppose that
\be \label{eq:condition bernoulli stationary measure}
(1-\th_1)(a+d-ab-ad)(b+c-bc-ab)=(1-\th_2)(a+d-ad-cd)(b+c-bc-cd),\ee
and that both sides of the equation are non-zero.
Suppose $\hP$ is a down-right path on the strip with outgoing edges $p_1,\dots,p_N\in\left\{\u,\r\right\}$.
Then we have a stationary measure $\mu_{\hP}$ of six-vertex model on a strip on down-right path $\hP$, which is Bernoulli with probability 
$$p^\u=\frac{a+d-ab-ad}{a+b+c+d-(a+c)(b+d)},$$
on the sites $\tau_i$ where $p_i=\u$, and Bernoulli with probability
$$p^\r=\frac{a+d-ad-cd}{a+b+c+d-(a+c)(b+d)},$$
on the sites $\tau_i$ where $p_i=\r$.
%If either $(a+d-ab-ad)$ or $(b+c-bc-cd)$ is nonzero, then we have Bernoulli stationary measure on a horizontal path
%$$\mu(\tau_1,\dots,\tau_N)=\prod_{i=1}^N\frac{(1-\tau_i)(b+c-bc-cd)+\tau_i(a+d-ab-ad)}{a+b+c+d-(a+c)(b+d)},\quad\tau_i\in\left\{0,1\right\}.$$
\end{corollary}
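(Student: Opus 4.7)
The plan is to specialize Theorem \ref{thm: general matrix ansatz construction} to a one-dimensional (scalar) representation. If we seek $D^\u, E^\u, D^\r, E^\r$ as scalars and take $\ll W|$ and $|V\rr$ to be any nonzero scalar, then the matrix product in \eqref{eq: matrix ansatz down-right path} factorizes across sites, producing a product Bernoulli measure whose marginal at site $i$ equals $D^{p_i}/(D^{p_i}+E^{p_i})$. The task therefore reduces to exhibiting scalars satisfying \eqref{eq:bulk relations}, \eqref{eq:left boundary relations} and \eqref{eq:right boundary relations}.

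First I would process the boundary relations. The scalar left-boundary equations \eqref{eq:left boundary relations} express $D^\r$ and $E^\r$ explicitly in terms of $D^\u, E^\u$:
\begin{equation*}
D^\r = (1-c)D^\u + aE^\u, \qquad E^\r = (1-a)E^\u + cD^\u.
\end{equation*}
Substituting these into the two scalar right-boundary equations \eqref{eq:right boundary relations}, a short calculation shows that both equations reduce to the same proportionality $(b+c-bc-cd)\,D^\u = (a+d-ab-ad)\,E^\u$. Solving this and unpacking via the formulas above yields, up to an overall scale,
\begin{equation*}
D^\u = a+d-ab-ad,\quad E^\u = b+c-bc-cd,\quad D^\r = a+d-ad-cd,\quad E^\r = b+c-bc-ab,
\end{equation*}
together with the identity $D^\u+E^\u = D^\r+E^\r = a+b+c+d-(a+c)(b+d)$, which will serve as the common denominator in the claimed formulas for $p^\u$ and $p^\r$.

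Next I would turn to the bulk relations \eqref{eq:bulk relations}. The first two are commutativity relations, automatic for scalars. The remaining two are each equivalent to the single identity $(1-\th_1)D^\u E^\r = (1-\th_2)D^\r E^\u$, which on inserting the explicit values above becomes exactly the hypothesis \eqref{eq:condition bernoulli stationary measure}. The non-vanishing clause in the hypothesis guarantees that $D^\u, E^\u, D^\r, E^\r$ are all nonzero, so the denominator in \eqref{eq: matrix ansatz down-right path} factorises as $\prod_i (D^{p_i}+E^{p_i}) \neq 0$, and Theorem \ref{thm: general matrix ansatz construction} produces the claimed product Bernoulli stationary measure with marginals $p^\u$ and $p^\r$.

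The only conceptual subtlety is the observation that the two right-boundary equations are linearly dependent once the left-boundary relations have been imposed; this redundancy is precisely what allows the eightfold compatibility system to be consistently solvable in the scalar regime at the cost of only the single scalar identity \eqref{eq:condition bernoulli stationary measure}. Apart from spotting that reduction, the entire argument is direct substitution.
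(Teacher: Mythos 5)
Your proposal is correct and takes essentially the same route as the paper, which simply exhibits the one-dimensional representation $D^\u=a+d-ab-ad$, $E^\u=b+c-bc-cd$, $D^\r=a+d-ad-cd$, $E^\r=b+c-bc-ab$ with $\ll W|=|V\rr=1$ and invokes Theorem \ref{thm: general matrix ansatz construction}; your derivation of these values from the boundary relations (rather than positing them) and your observation that the two right-boundary equations collapse to one proportionality are a nice bonus but do not change the argument. The only loose step is the inference that the four scalars being nonzero forces $D^{p_i}+E^{p_i}\neq 0$ (a sum of nonzero numbers can vanish); this is harmless here because both sums equal the common denominator $a+b+c+d-(a+c)(b+d)$, whose nonvanishing is exactly what the claimed formulas for $p^\u,p^\r$ require and is covered by the standing assumption on the denominator in Theorem \ref{thm: general matrix ansatz construction}.
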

\begin{proof}
    One can observe that in the case of \eqref{eq:condition bernoulli stationary measure}, the $1$-dimensional matrices
    $$D^\u=a+d-ab-ad,\quad E^\u=b+c-bc-cd,\quad D^\r=a+d-ad-cd,\quad E^\r=b+c-bc-ab,$$
    and vectors $\ll W|=|V\rr=1$ 
    satisfy compatibility relations \eqref{eq:bulk relations}, \eqref{eq:left boundary relations} and \eqref{eq:right boundary relations}. Hence we get the Bernoulli stationary measure from Theorem \ref{thm: general matrix ansatz construction}.
\end{proof}
\begin{corollary}
    When $a=b=c=d=1$ our model has `anti-reflecting' boundary, i.e. an arrow that touches the boundary must exist the system, and an arrow enters the system at a boundary point exactly when no arrow exists at this boundary point. In the corresponding interacting particle system on a down-right path, the parity of the number of particles get preserved. Suppose $(1-\th_1)(1-\th_2)\neq 0$ and $\hP$ is a down-right path with outgoing edges $p_1,\dots,p_N\in\left\{\u,\r\right\}$.
Then we have a stationary measure $\mu_{\hP}$ of the six-vertex model on the strip on $\hP$, which is Bernoulli with probability 
$$p^\u=\frac{\sqrt{1-\th_2}}{\sqrt{1-\th_1}+\sqrt{1-\th_2}}$$
on the sites $\tau_i$ where $p_i=\u$, and Bernoulli with probability
$$p^\r=\frac{\sqrt{1-\th_1}}{\sqrt{1-\th_1}+\sqrt{1-\th_2}}$$
on the sites $\tau_i$ where $p_i=\r$.

    We can get two stationary measures from it by restricting this measure to the set of states with even/odd number of particles and multiplying a normalizing constant.
\end{corollary}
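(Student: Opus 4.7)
The plan is to apply Theorem \ref{thm: general matrix ansatz construction} with one-dimensional (scalar) matrices, and then separately establish parity preservation so as to extract the two restricted stationary measures.

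First, specializing $a=b=c=d=1$ in the boundary relations \eqref{eq:left boundary relations} and \eqref{eq:right boundary relations} with $\ll W|=|V\rr=1$, both boundaries collapse to the identifications $D^\r=E^\u$ and $E^\r=D^\u$. Substituting these into the bulk relations \eqref{eq:bulk relations}, the two commutativity relations are trivially satisfied for scalars, and the remaining two relations both reduce to the single scalar equation
\begin{equation*}
(1-\th_1)(D^\u)^2=(1-\th_2)(E^\u)^2.
\end{equation*}
Taking $D^\u=E^\r=\sqrt{1-\th_2}$ and $E^\u=D^\r=\sqrt{1-\th_1}$ is a solution, and the denominator in \eqref{eq: matrix ansatz down-right path} equals $(\sqrt{1-\th_1}+\sqrt{1-\th_2})^N$, which is nonzero under the hypothesis $(1-\th_1)(1-\th_2)\neq 0$. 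Hence Theorem \ref{thm: general matrix ansatz construction} produces a stationary measure $\mu_\hP$ on $\hP$.

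Since all entries are scalars, the numerator and denominator of \eqref{eq: matrix ansatz down-right path} factorize over $i$, so $\mu_\hP$ is a product measure. The $i$-th Bernoulli factor has success probability $D^{p_i}/(E^{p_i}+D^{p_i})$, which evaluates to $p^\u$ when $p_i=\u$ and to $p^\r$ when $p_i=\r$, matching the claimed formulas.

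For parity preservation, I would inspect the weights \eqref{left_vertex_weights} and \eqref{right_vertex_weights} at $a=b=c=d=1$. At each boundary vertex only two of the four configurations have positive weight, and in both the arrow on the real outgoing edge is the complement of the arrow on the real incoming edge; equivalently, each boundary vertex exchanges exactly one arrow with the reservoir, contributing $\pm 1$ to the particle count. Bulk vertices, by inspection of \eqref{bulk_vertex_weights}, conserve arrow counts. Since $\U(\hP,\hP_1)$ adds exactly one left boundary vertex and one right boundary vertex (both on $\hP_1$, which starts and ends on these boundaries) to a collection of bulk vertices, the total particle-number change in one time step is a sum of two $\pm 1$ contributions, hence even. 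This implies that the even- and odd-particle subsets of $\{0,1\}^N$ are invariant under the transition matrix, and therefore restricting $\mu_\hP$ to either subset and renormalizing yields a stationary probability measure on that component. The main obstacle is the boundary bookkeeping; once one confirms that the nonzero configurations at each boundary vertex are precisely the two that flip the real occupation, the parity preservation and the algebraic identities both follow directly.
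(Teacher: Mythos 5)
Your proposal is correct and follows essentially the same route as the paper: a one-dimensional representation $D^\u=E^\r=\sqrt{1-\th_1'}$-type scalars fed into Theorem \ref{thm: general matrix ansatz construction} (the paper simply exhibits $D^\r=E^\u=\sqrt{1-\th_1}$, $D^\u=E^\r=\sqrt{1-\th_2}$, $\ll W|=|V\rr=1$ and checks the relations, whereas you derive this solution by first collapsing the boundary relations to $D^\r=E^\u$, $E^\r=D^\u$ and reducing the bulk relations to $(1-\th_1)(D^\u)^2=(1-\th_2)(E^\u)^2$). Your additional verification of parity preservation — each of the two boundary vertices in $\U(\hP,\hP_1)$ flips its single real occupation while bulk vertices conserve arrows, so the particle count changes by an even amount — is sound and fills in a claim the paper asserts without proof.
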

\begin{proof}
    When $a=b=c=d=1$ one can observe that the $1$-dimensional matrices
    $$D^\r=E^\u=\sqrt{1-\th_1},\quad D^\u=E^\r=\sqrt{1-\th_2},$$
    and vectors $\ll W|=|V\rr=1$ 
    satisfy compatibility relations \eqref{eq:bulk relations}, \eqref{eq:left boundary relations} and \eqref{eq:right boundary relations}. Hence we get the Bernoulli stationary measure from Theorem \ref{thm: general matrix ansatz construction}.
\end{proof}
\begin{corollary}
    When $a=b=c=d=0$ our model has `reflecting' boundary, i.e. any arrow that touches the boundary must bounce back, and no new arrows can be created at the boundary. In the interacting particle system on a down-right path, the total number of particles is preserved. Let $q=\th_1/\th_2$ and suppose $0\leq k\leq N$ is the number of particles in the system. Then we have the   collection of stationary measures $\mu_k$ for $0\leq k\leq N$ on any down-right path $\hP$, with probability
      $$\frac{q^{-\sum m_j}}{\sum_{1\leq \ell_1<\dots<\ell_k\leq N}q^{-\sum \ell_j}}$$
    at the state where the $k$ particles are placed at the sites $1\leq m_1<\dots<m_k\leq N$.
\end{corollary}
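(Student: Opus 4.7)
The plan is to apply the matrix product ansatz of Theorem~\ref{thm: general matrix ansatz construction} with a minimal choice of matrices tailored to the reflecting boundary, and then to decompose the resulting measure by particle number. First, the bulk weights \eqref{bulk_vertex_weights} conserve arrows, and with $a=b=c=d=0$ the boundary weights \eqref{left_vertex_weights} and \eqref{right_vertex_weights} deterministically map each incoming boundary configuration to an outgoing one carrying the same number of arrows. Hence every local move \eqref{eq:local move bulk}--\eqref{eq:local move right boundary} preserves the number of occupied outgoing edges of a down-right path, so the particle number $\sum_i \tau_i$ is a conserved quantity of the dynamics and each $k$-particle sector $\{\tau:\sum_i \tau_i=k\}$ is invariant under $P_{\hP,\hP_1}$.

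For the ansatz I take $D^\u = D^\r =: D$ and $E^\u = E^\r =: E$. Under this identification, the boundary relations \eqref{eq:left boundary relations}, \eqref{eq:right boundary relations} (with $a=b=c=d=0$) are automatic, and the four bulk relations \eqref{eq:bulk relations} collapse to the single $q$-commutation $DE = qED$ with $q=\theta_1/\theta_2$. A convenient concrete realization acts on the $(N+1)$-dimensional space with basis $|0\rangle,\ldots,|N\rangle$ via $D|n\rangle = q^n|n\rangle$, $E|n\rangle = |n+1\rangle$ for $n<N$ and $E|N\rangle = 0$, with boundary vectors $|V\rr = |0\rangle$ and $\ll W| = \sum_{j=0}^N \langle j|$.

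Since $D^\u=D^\r$ and $E^\u=E^\r$, the matrix product in \eqref{eq: matrix ansatz down-right path} is the same for every down-right path $\hP$ and depends only on the occupation variables. For the configuration with particles at $1\le m_1<\cdots<m_k\le N$, the numerator equals
\[
\ll W| E^{m_1-1} D\, E^{m_2-m_1-1} D \cdots D\, E^{N-m_k} |V\rr.
\]
Using $DE = qED$ to move all $k$ factors of $D$ to the right produces a prefactor $q^{\sum_{j=1}^k j n_j}$, where $n_j$ is the length of the $E$-block immediately to the right of the $j$-th $D$. A direct bookkeeping identifies this exponent as $kN - \binom{k}{2} - \sum_j m_j$, while one computes $\ll W| E^{N-k} D^k |V\rr = \langle \,\sum_{j}\langle j|\,\big|\,|N-k\rangle\rangle = 1$ for each $0\le k\le N$. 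Hence the weight factors as $C_k\, q^{-\sum_j m_j}$ with $C_k = q^{kN - \binom{k}{2}}>0$, which also shows the denominator of \eqref{eq: matrix ansatz down-right path} is strictly positive, so Theorem~\ref{thm: general matrix ansatz construction} produces a bona fide stationary probability measure $\mu_\hP$.

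The corollary now follows by conditioning: since each $k$-particle sector is invariant under the dynamics, the normalized restriction of $\mu_\hP$ to $\{\sum_i\tau_i=k\}$ is itself stationary for the dynamics restricted to that sector. The constant $C_k$ cancels upon normalization, yielding exactly the claimed $\mu_k$. The only mildly delicate step is the combinatorial identity for the exponent $\sum_{j=1}^k j n_j = kN - \binom{k}{2} - \sum_j m_j$, but this is routine bookkeeping rather than a genuine obstacle.
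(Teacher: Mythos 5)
Your proof is correct, and the core strategy is the same as the paper's: set $D^\u=D^\r=D$ and $E^\u=E^\r=E$ so that the boundary relations of Theorem~\ref{thm: general matrix ansatz construction} become vacuous at $a=b=c=d=0$ and the bulk relations collapse to the single $q$-commutation $DE=qED$, then evaluate the matrix product explicitly. The one genuine difference is in how the $k$-particle sectors are isolated. The paper chooses $E=\sum_n q^n|e_n\rr\ll e_n|$ diagonal, $D$ a lowering operator, $\ll W|=\ll e_0|$ and $|V\rr=|e_k\rr$; since each $D$ lowers the index by one, the matrix product vanishes unless exactly $k$ of the factors are $D$'s, so the ansatz outputs a measure already supported on the $k$-particle sector and produces each $\mu_k$ directly. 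You instead take $D$ diagonal and $E$ a raising operator with $|V\rr=|0\rangle$ and $\ll W|=\sum_j\langle j|$, which yields a single stationary measure that is a mixture $\sum_k C_k q^{-\sum_j m_j}$ over all sectors, and you then condition on the conserved particle number; this conditioning step is valid precisely because both each sector and its complement are invariant under the dynamics, so the normalized restriction of a stationary measure to a sector is again stationary. Your bookkeeping is right: moving the $D$'s to the right costs $q^{\sum_j j n_j}=q^{kN-\binom{k}{2}-\sum_j m_j}$ and $\ll W|E^{N-k}D^k|V\rr=1$, matching the weight the paper obtains. The paper's choice of $|V\rr=|e_k\rr$ buys a slightly cleaner statement (no conditioning needed, and the constant $C_k$ never appears), while your version has the mild advantage of exhibiting all the $\mu_k$ as components of one matrix-product measure; both are complete proofs.
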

\begin{proof}
    When $a=b=c=d=0$ we set $$D^\r=D^\u=D,\quad E^\r=E^\u=E.$$ The compatibility relations \eqref{eq:bulk relations}, \eqref{eq:left boundary relations} and \eqref{eq:right boundary relations} turn into a single relation
    $$DE=qED.$$
    We take the representation on the Fock space spanned by $\left\{e_i:i\in\mathbb{Z}_{\geq 0}\right\}$:
    $$E=\sum_{n=0}^\infty q^n|e_n\rr\ll e_n|,\quad D=\sum_{n=1}^\infty|e_{n-1}\rr\ll e_n|,$$
    and $W=e_0$, $V=e_k$. Theorem \ref{thm: general matrix ansatz construction} gives the stationary measure $\mu_k$.
\end{proof}
\section{Askey-Wilson processes and limit of the mean particle density}\label{sec 3}
After the matrix product ansatz solution of stationary measure of open ASEP in \cite{DEHP93}  as reviewed in subsection \ref{subsec:matrix ansatz for open ASEP}), there are various representations \cite{DEHP93,USW04,ED,Sandow,Sasamoto,BECE,ER} of the $\dehp$ algebra \eqref{eq:DEHP algebra} for different parameters $(q,\alpha,\beta,\gamma,\delta)$, which induce studies of asymptotics of open ASEP as number of sites $N\rightarrow\infty$.  As mentioned in Remark \ref{remark:USW}, the USW  representation for general parameters $(q,\alpha,\beta,\gamma,\delta)$ was given in the seminal work \cite{USW04} in terms of the  Askey-Wilson orthogonal polynomials. Using the USW representation, the open ASEP stationary measure was written in \cite{BW17} as expectations of the Askey-Wilson Markov process introduced in \cite{BW10} (Theorem \eqref{thm:askey wilson for open asep}).  
%Later \cite{BW10} introduced Askey-Wilson processes based on Askey-Wilson polynomials, and then the \edit{stationary measure} was written in \cite{BW17} as an expectation of Askey-Wilson process (Theorem \eqref{thm:askey wilson for open asep}). 
Many asymptotics of open ASEP was then rigorously studied in \cite{BW17,BW19} by this technique. We briefly review Askey-Wilson processes and the phase diagram of stationary measure of open ASEP in first two subsections, following \cite{BW10,BW17,BW19}. In the third subsection we prove Theorem \ref{thm:main theorem section 3} on the mean density of stationary measure of six-vertex model on a strip on a horizontal path, which in particular provides the phase diagram (Figure \ref{fig:phase diagram}). 

%We study the limit of mean particle density under this stationary measure as number of sites $N\rightarrow\infty$ (Theorem \ref{thm:main theorem section 3}), which also exhibits a phase diagram (Figure \ref{fig:phase diagram} (b)) that is a tilting of the phase diagram of open ASEP (Figure \ref{fig:phase diagram} (a)). 

%\edit{In this section we will use the $q$-Pochhammer symbol: For complex $z$ and $0\leq n\leq\infty$, $$(z;q)_n=\prod_{j=0}^{n-1}\,(1-z q^j), \quad (z_1,\cdots,z_k;q)_n=\prod_{i=1}^k(z_i;q)_n.$$}

\subsection{Backgrounds on Askey-Wilson process}
\label{subsec:background AW process}
The Askey-Wilson measures are probability measures which make the Askey–Wilson polynomials orthogonal. Based on these measures, \cite{BW10} introduced a family of time-\emph{inhomogeneous}  Markov processes called Askey-Wilson processes.

The Askey-Wilson measures depend on five parameters $(\aa,\bb,\cc,\dd,q)$, where $q\in(-1,1)$ and the parameters $\aa,\bb,\cc,\dd$ admit the following three possibilities:
\begin{enumerate}
    \item [(1)] all of them are real,
    \item [(2)] two of them are real and the other two form a complex conjugate pair,
    \item [(3)] they form two complex conjugate pairs,
\end{enumerate}
and in addition we require:
$$\aa\cc, \aa\dd, \bb\cc, \bb\dd, q\aa\cc, q\aa\dd, q\bb\cc, q\bb\dd, \aa\bb\cc\dd, q\aa\bb\cc\dd\in\mathbb{C}\setminus[1,\infty).$$
The Askey-Wilson measure is of mixed type:
$$\nu(dy;\aa, \bb, \cc, \dd, q)=f(y, \aa, \bb, \cc, \dd, q)dy+\sum_{z\in F(\aa,\bb,\cc,\dd,q)}p(z)\delta_z(dy),$$
with absolutely continuous part supported on $[-1,1]$ with density 
\begin{equation}\label{eq:defn of continuous part density}
    f(y,\aa,\bb,\cc,\dd,q)=\frac{(q,\aa\bb,\aa\cc,\aa\dd,\bb\cc,\bb\dd,\cc\dd;q)_{\infty}}{2\pi(\aa\bb\cc\dd;q)_{\infty}\sqrt{1-y^2}}\left\vert\frac{(e^{2i\th_y};q)_{\infty}}{(\aa e^{i\th_y},\bb e^{i\th_y},\cc e^{i\th_y},\dd e^{i\th_y};q)_{\infty}}\right\vert^2,
\end{equation}
where $y=\cos\th_y$ and  $f(y,\aa,\bb,\cc,\dd,q)=0$ when $|y|>1$.  We use the $q$-Pochhammer symbol: for complex $z,z_1,\dots,z_k$ and $0\leq n\leq\infty$, $$(z;q)_n=\prod_{j=0}^{n-1}\,(1-z q^j), \quad (z_1,\cdots,z_k;q)_n=\prod_{i=1}^k(z_i;q)_n.$$ 
The discrete (atomic) part supported on a finite or empty set $F(\aa,\bb,\cc,\dd,q)$ of atoms generated by numbers $\chi\in\left\{\aa,\bb,\cc,\dd\right\}$ such that $|\chi|>1$. 
%The absolutely continuous part has density:
%\begin{equation}\label{eq:defn of continuous part density}
    %f(y,\aa,\bb,\cc,\dd,q)=\frac{(q,\aa\bb,\aa\cc,\aa\dd,\bb\cc,\bb\dd,\cc\dd;q)_{\infty}}{2\pi(\aa\bb\cc\dd;q)_{\infty}\sqrt{1-y^2}}\left\vert\frac{(e^{2i\th_y};q)_{\infty}}{(\aa e^{i\th_y},\bb e^{i\th_y},\cc e^{i\th_y},\dd e^{i\th_y};q)_{\infty}}\right\vert^2,
%\end{equation}
%where $y=\cos\th_y$ and $f(y,\aa,\bb,\cc,\dd,q)=0$ when $|y|>1$. 
In this case $\chi$ must be real and generates its own set of atoms:
\begin{equation}\label{eq:defn of atoms}
    y_j=\frac{1}{2}\left(\chi q^j+\frac{1}{\chi q^j}\right) \text{ for }j=0,1\dots\text{ such that } |\chi q^j|> 1,
\end{equation}
the union of which is $F(\aa,\bb,\cc,\dd,q)$. 
When $\chi=\aa$, the corresponding masses are
$$
p(y_0;\aa,\bb,\cc,\dd,q) =\frac{\qp{\aa^{-2},\bb\cc,\bb\dd,\cc\dd}}{\qp{\bb/\aa,\cc/\aa,\dd/\aa,\aa\bb\cc\dd}},
$$
$$
p(y_j;\aa,\bb,\cc,\dd,q) =p(y_0;\aa,\bb,\cc,\dd,q)\frac{\qps{\aa^2,\aa\bb,\aa\cc,\aa\dd}_j\,(1-\aa^2q^{2j})}{\qps{q,q\aa/\bb,q\aa/\cc,q\aa/\dd}_j(1-\aa^2)}\left(\frac{q}{\aa\bb\cc\dd}\right)^j,\quad j\geq 1.
$$
%The above formulas only apply for $\aa,\bb,\cc,\dd\neq 0$. Otherwise they take different forms. 
For other values of $\chi$ the masses are given by similar formulas with $\aa$ and $\chi$ swapped.
We will not use these precise formulas of the masses in this paper.

The Askey-Wilson processes 
%is a family of  time-inhomogeneous Markov processes 
depend on five parameters $(\AA,\BB,\CC,\DD,q)$, where $q\in(-1,1)$ and $\AA,\BB,\CC,\DD$ are either real or $(\AA,\BB)$ or $(\CC,\DD)$ are complex conjugate pairs, and in addition
$$\AA\CC,\AA\DD,\BB\CC,\BB\DD,q\AA\CC,q\AA\DD,q\BB\CC,q\BB\DD,\AA\BB\CC\DD,q\AA\BB\CC\DD\in\mathbb{C}\setminus[1,\infty).$$
The Askey-Wilson process $\left\{Y_t\right\}_{t\in I}$ is a  time-\emph{inhomogeneous} Markov process defined on the interval
$$
  I=\left[\max\{0,\CC\DD,q\CC\DD\},\,\tfrac{1}{\max\{0,\AA\BB,q\AA\BB\}}\right),
$$ with marginal distributions
$$
  \pi_t(dx)=\nu(dx;\AA\sqrt{t},\BB\sqrt{t},\CC/\sqrt{t},\DD/\sqrt{t},q),\quad t\in I,
$$ (which has compact support $U_t$) and transition probabilities
$$
   P_{s,t}(x,dy)=\nu(dy;\AA\sqrt{t},\BB\sqrt{t},\sqrt{s/t}(x+\sqrt{x^2-1}),\sqrt{s/t}(x-\sqrt{x^2-1})),
$$
for $s<t$, $s,t\in I$, $x\in U_s$. We remark that the marginal distribution $\pi_t(dx)$ may have atoms at 
\begin{equation}\label{eq:defn of all possible atoms}
    \frac{1}{2}\left(\AA\sqrt{t} q^j+\frac{1}{\AA\sqrt{t} q^j}\right),\quad \frac{1}{2}\left(\BB\sqrt{t} q^j+\frac{1}{\BB\sqrt{t} q^j}\right),\quad\frac{1}{2}\left(\frac{\CC q^j}{\sqrt{t}}+\frac{\sqrt{t}}{\CC q^j}\right),\quad \frac{1}{2}\left(\frac{\DD q^j}{\sqrt{t}}+\frac{\sqrt{t}}{\DD q^j}\right),
\end{equation}
and the transition probabilities $P_{s,t}(x,dy)$ may also have atoms.

In the following subsections we only consider Askey-Wilson process under conditions
$$\AA,\CC\geq 0,\quad\BB,\DD\leq 0,\quad \AA\CC<1,\quad\BB\DD<1,$$
in which case the process is defined on interval $I=[0,\infty)$, and the marginal distributions $\pi_t(dx)$ cannot be purely discrete.
\subsection{Stationary measure of open ASEP and asymptotics}
\label{subsec:background asymptotics open ASEP}
We consider open ASEP on the lattice $\left\{1,\dots,N\right\}$ with particle jump rates $(q,\alpha,\beta,\gamma,\th)$ satisfying \eqref{eq:conditions open ASEP}. 
\begin{definition}\label{defn:parameterization open ASEP}
    We will use the following parameterization:
\be\label{eq:defining ABCD}
A=\k_+(\beta,\de),\quad B=\k_-(\beta,\de),\quad C=\k_+(\alpha,\gamma),\quad D=\k_-(\alpha,\gamma),
\ee
where 
$$
\k_{\pm}(u,v)=\frac{1}{2u}\lb1-q-u+v\pm\sqrt{(1-q-u+v)^2+4uv}\rb.
$$
We can check that for any given $q\in[0,1)$, \eqref{eq:defining ABCD} gives a bijection 
$$\left\{(\alpha,\beta,\gamma,\delta):\alpha,\beta>0, \gamma,\delta\geq 0\right\}\overset{\sim}{\longrightarrow}\left\{(A,B,C,D): A,C\geq0, B,D\in(-1,0]\right\}.$$
\end{definition}
%\begin{proposition}[USW representation of the $\dehp$ algebra, see \cite{USW04,BW10}]\label{prop: USW representation}
%Suppose that the parameters $(q,\alpha,\beta,\gamma,\delta)$ satisfy \eqref{eq:conditions open ASEP}. 
%Suppose that $\alpha_n,\beta_n,\gamma_n,\delta_n,\ep_n,\varphi_n$ are given in terms of parameters $(A,B,C,D,q)$ by the formulas in \cite[page 1243]{BW10}. Consider the tridiagonal matrices:
%\begin{equation*}
%      \xx=\left[\begin{matrix}
%        \gamma_0 & \ep_1 & 0 &\dots \\
%        \alpha_0 &  \gamma_1& \ep_2 &\dots \\
%        0 & \alpha_1 & \gamma_2& \dots\\
%        \vdots &\vdots & \vdots& \ddots\\
%      \end{matrix}\right], \quad \yy=\left[\begin{matrix}
%        \delta_0 & \varphi_1 & 0 &\dots \\
%        \beta_0 & \delta_1 & \varphi_2 & \dots\\
%        0 & \beta_1 & \delta_2 & \dots\\
%        \vdots & \vdots& \vdots& \ddots\\
%      \end{matrix}\right]\,
%    \end{equation*}
%$$\E=\frac{1}{1-q}\mathbf{I}+\frac{1}{\sqrt{1-q}}\yy,\quad \D=\frac{1}{1-q}\mathbf{I}+\frac{1}{\sqrt{1-q}}\xx,$$
%and vectors 
%$$\ll W|=(1,0,0,\dots),\quad |V\rr=(1,0,0,\dots)^T$$ 
%They satisfy the $\dehp$ algebra \eqref{eq:DEHP algebra} with parameters $(q,\alpha,\beta,\gamma,\delta)$.  
%\end{proposition}

\begin{theorem}[Theorem 1 in \cite{BW17}]\label{thm:askey wilson for open asep}
    Suppose $AC<1$ and $\D,\E,\ll W|,|V\rr$ satisfy the $\dehp$ algebra with $\ll W|V\rr=1$. 
    %are given by the $\usw$ representation  of the $\dehp$ algebra in Proposition \ref{prop: USW representation}. 
    Then for $0< t_1\leq\dots\leq t_N$, we have
    \be\label{eq:MP}\ll W|\prod_{j=1}^{\substack{N\\\longrightarrow}}(\E+t_j\D)|V\rr=\frac{1}{(1-q)^N}\mathbb{E}\lb\prod_{j=1}^N(1+t_j+2\sqrt{t_j}Y_{t_j})\rb,\ee
    where the right arrow means that the product is taken in increasing order of $j$ from left to right.
    Hence by Theorem \ref{thm:matrix ansatz from DEHP}, the generating function of stationary measure of open ASEP reads:
    \be\label{eq:generating function pi}
    \mathbb{E}_{\pi}\lb\prod_{j=1}^Nt_j^{\tau_j}\rb=\frac{\mathbb{E}\lb\prod_{j=1}^N(1+t_j+2\sqrt{t_j}Y_{t_j})\rb}{2^N\mathbb{E}(1+Y_1)^N},
    \ee 
    where $\left\{Y_t\right\}_{t\geq 0}$ is the Askey–Wilson process with parameters $(A, B, C, D, q)$.
\end{theorem}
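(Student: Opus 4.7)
The plan is to exploit a concrete representation of the $\dehp$ algebra, specifically the $\usw$ representation recalled in Remark \ref{remark:USW}, together with the orthogonality theory of Askey--Wilson polynomials. By Remark \ref{remark:nonsingular}, the value of any matrix product $\ll W|\D^{n_1}\E^{m_1}\cdots|V\rr$ depends only on parameters $(q,\alpha,\beta,\gamma,\de)$ and $\ll W|V\rr$, not on the particular choice of $\D,\E,\ll W|,|V\rr$ satisfying the $\dehp$ algebra. Hence it suffices to verify the identity in the $\usw$ representation, where $\D,\E$ are explicit infinite tridiagonal matrices (encoding Jacobi recurrences of the Askey--Wilson polynomials with parameters $(A,B,C,D,q)$), and $\ll W|=(1,0,\dots)$, $|V\rr=(1,0,\dots)^T$.

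The crucial observation I would establish next is that in an appropriate basis indexed by a time parameter $t>0$, the combination $\tfrac{1}{1-q}(\E+t\D)$ acts as multiplication by $1+t+2\sqrt{t}\,x$ on functions of the spectral variable $x$ governed by the Askey--Wilson measure with parameters $(A\sqrt{t},B\sqrt{t},C/\sqrt{t},D/\sqrt{t},q)$, which is precisely the marginal law $\pi_t$ of the Askey--Wilson process $\{Y_t\}$. Concretely, I would write vectors $|v(t)\rr$ built from the Askey--Wilson polynomials at time $t$ so that $\ll W|$ and $|V\rr$ correspond to evaluation at the "ground state" of the respective Jacobi matrices, and $\tfrac{1}{1-q}(\E+t\D)|v(t)\rr = (1+t+2\sqrt{t}\,x)\,|v(t)\rr$ holds in the spectral sense. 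This converts a single factor into an integration against $\pi_t$.

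To chain the $N$ factors with $t_1\leq\cdots\leq t_N$, I would use the fact that the change-of-basis operator between the Askey--Wilson polynomials at times $s$ and $t$ (with $s<t$) is exactly the transition kernel $P_{s,t}(x,dy)$ of the Askey--Wilson process given in subsection \ref{subsec:background AW process}. Inserting resolutions of identity at each intermediate time and using the time-ordering of the product $\prod_{j=1}^{\substack{N\\\longrightarrow}}(\E+t_j\D)$, the matrix product collapses into the joint moment $\E\bigl[\prod_{j=1}^N(1+t_j+2\sqrt{t_j}\,Y_{t_j})\bigr]$. The assumption $AC<1$ ensures the Askey--Wilson process is defined on $[0,\infty)$ and that the boundary vectors $\ll W|,|V\rr$ have positive overlap with the spectral measure, legitimizing these manipulations.

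The main obstacle will be twofold. First, identifying the change-of-basis between Askey--Wilson polynomials of different parameter levels with the Markov transition kernel $P_{s,t}$ requires a careful manipulation of three-term recurrences and connection coefficients; this is where the specific algebraic structure of the $\usw$ representation, in which the same operators $\D,\E$ act simultaneously as tridiagonal matrices for every $t$, becomes essential. Second, one must verify convergence of the infinite-dimensional matrix products and the interchange of summation/integration; the nonsingularity condition \eqref{eq:singular case} (which in $(A,B,C,D)$ parameters corresponds to $AC<1$ together with appropriate technical assumptions, cf.\ \eqref{eq:technical condition}) is what makes this rigorous. Once these steps are in place, Theorem \ref{thm:matrix ansatz from DEHP} immediately yields \eqref{eq:generating function pi} by dividing by the total mass term $\ll W|(\E+\D)^N|V\rr = (1-q)^{-N}\,2^N\,\E(1+Y_1)^N$.
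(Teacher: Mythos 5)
Your proposal is essentially correct and follows the same route as the paper, with the caveat that the paper does not actually prove Theorem \ref{thm:askey wilson for open asep}: it imports it from \cite{BW17} and supplies only Remark \ref{remark:fan region implies nonsingular}, which is precisely your opening reduction --- by Remark \ref{remark:nonsingular} the matrix products are representation-independent in the non-singular case, so it suffices to verify the identity in the $\usw$ representation of \cite{USW04}. Your sketch of the underlying argument of \cite{BW17} is an accurate outline of how that result is established: in the $\usw$ representation $(1-q)(\E+t\D)=(1+t)I+(\mathbf{e}+t\mathbf{d})$ with $\mathbf{e}+t\mathbf{d}$ equal to $2\sqrt{t}$ times the Jacobi matrix of the Askey--Wilson polynomials with parameters $(A\sqrt{t},B\sqrt{t},C/\sqrt{t},D/\sqrt{t},q)$, and the connection coefficients between the polynomial systems at times $s<t$ realize the transition kernels $P_{s,t}$ of the process constructed in \cite{BW10}; the time-ordering $t_1\leq\dots\leq t_N$ is what allows the insertions of these kernels to chain into a joint moment. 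Your final normalization $\ll W|(\E+\D)^N|V\rr=(1-q)^{-N}2^N\,\mathbb{E}(1+Y_1)^N$ is also correct.

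One small correction: the non-singularity condition \eqref{eq:singular case} translates to $ABCD\neq q^{-l}$ and is automatically implied by $AC<1$ together with $B,D\in(-1,0]$ (this is the content of Remark \ref{remark:fan region implies nonsingular}); it has nothing to do with the technical condition \eqref{eq:technical condition}, which concerns atoms of the Askey--Wilson marginals landing exactly at $q^{-l}$ and is only needed for the $N\to\infty$ asymptotics in Theorem \ref{thm:main theorem section 3}, not for the finite-$N$ identity \eqref{eq:MP}. Conflating the two would impose an unnecessary hypothesis on the theorem.
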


\begin{remark}\label{remark:fan region implies nonsingular}
We note that the above theorem was proved in \cite{BW17} for $\D$, $\E$, $\ll W|$ and $|V\rr$ given by the USW representation \cite{USW04} in Remark \ref{remark:USW}. In view of $AC<1$ we have $\gamma\delta/(\alpha\beta)=ABCD<1$, in particular $\alpha\beta\neq q^l\gamma\delta$ for any $l=0,1,\dots$. By Remark \ref{remark:nonsingular} we have that the matrix product \eqref{eq:MP} does not depend on the specific choice of $\D$, $\E$, $\ll W|$ and $|V\rr$ satisfying the DEHP algebra, and that one only need to assume $\ll W|V\rr=1$.
\end{remark}

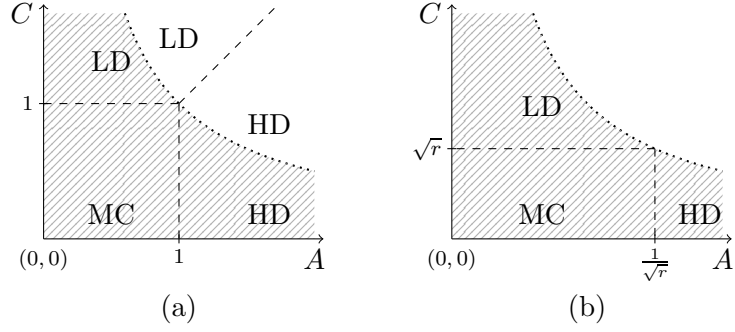
\begin{figure}
\begin{center}
\begin{tikzpicture}[scale=0.6]
\draw[scale = 1,domain=6.8:11,smooth,variable=\x,dotted,thick] plot ({\x},{1/((\x-7)*1/3+2/3)*3+5});
\fill[pattern=north east lines, pattern color=gray!60] (5,5)--(5,10) -- plot [domain=6.8:11]  ({\x},{1/((\x-7)*1/3+2/3)*3+5}) -- (11,5) -- cycle;
 \draw[->] (5,5) to (5,10.2);
 \draw[->] (5.,5) to (11.2,5);
   \draw[-, dashed] (5,8) to (8,8);
   \draw[-, dashed] (8,8) to (8,5);
   \draw[-, dashed] (8,8) to (10.2,10.2);
   \node [left] at (5,8) {\scriptsize$1$};
   \node[below] at (8,5) {\scriptsize $1$};
     \node [below] at (11,5) {$A$};
   \node [left] at (5,10) {$C$};
  \draw[-] (8,4.9) to (8,5.1);
   \draw[-] (4.9,8) to (5.1,8);
 \node [below] at (5,5) {\scriptsize$(0,0)$};
    \node [above] at (6.5,8.5) {LD};\node [above] at (8,9) {LD};
    %\node[above] at (8,9) {LD};
    \node [below] at (10,6) {HD};  \node [below] at (10,8) {HD}; 
      %\node [above] at (10,9) {HD};
 \node [below] at (6.5,6) {MC};
  \node[below] at (8,4){(a)};
\end{tikzpicture}
\quad\quad
\begin{tikzpicture}[scale=0.6]
\draw[scale = 1,domain=6.8:11,smooth,variable=\x,dotted,thick] plot ({\x},{1/((\x-7)*1/3+2/3)*3+5});
\fill[pattern=north east lines, pattern color=gray!60] (5,5)--(5,10) -- plot [domain=6.8:11]  ({\x},{1/((\x-7)*1/3+2/3)*3+5}) -- (11,5) -- cycle;
 \draw[->] (5,5) to (5,10.2);
 \draw[->] (5.,5) to (11.2,5);
   \draw[-, dashed] (9.5,7) to (5,7);
   \draw[-, dashed] (9.5,7) to (9.5,5);
   \node [left] at (5,7) {\scriptsize$\sqrt{r}$};
   \node[below] at (9.5,5) {\scriptsize $\frac{1}{\sqrt{r}}$};
     \node [below] at (11,5) {$A$};
   \node [left] at (5,10) {$C$};
  \draw[-] (9.5,4.9) to (9.5,5.1);
   \draw[-] (4.9,7) to (5.1,7);
 \node [below] at (5,5) {\scriptsize$(0,0)$};
    \node [above] at (7,7.5) {LD};
    %\node[above] at (8,9) {LD};
    \node [below] at (10.5,6) {HD}; 
    %\node [above] at (10,9) {HD};
 \node [below] at (7,6) {MC};
 \node[below] at (8,4){(b)};
\end{tikzpicture}
\end{center}
\caption{(a) Phase diagram for open ASEP. (b) Phase diagram for six-vertex model on a strip.} 
\label{fig:phase diagram}
\end{figure}
It has been well-known in the physics literature (see for example \cite{DDM, Sandow,DLS1,DLS2}) that the limit of the statistics of open ASEP under the stationary measure as the number of sites $N\rightarrow\infty$ exhibit a phase diagram that involves only two boundary parameters $A,C$:
\begin{definition}
We define the phase diagram of open ASEP involving parameters $A,C$.

    We define two regions:
    \begin{enumerate}
        \item [$\bullet$] (fan region) $AC<1$,
        \item [$\bullet$] (shock region) $AC>1$.
    \end{enumerate}
    
    We also define three phases:
    \begin{enumerate}
        \item [$\bullet$] (maximal current phase) $A<1$, $C<1$,
        \item [$\bullet$] (high density phase) $A>1$, $A>C$,
        \item [$\bullet$] (low density phase) $C>1$, $C>A$.
    \end{enumerate}
    See Figure \ref{fig:phase diagram} (a) where the shadowed area denote the fan region, and the three phases are labeled.
\end{definition}

%For example, the limits of the density profile of the open ASEP have been well-known in physics (see for example \cite{DLS1,SD}) to exhibit the above three phases.
%Using the USW representation (Proposition \ref{prop: USW representation}) of the $\dehp$ algebra, \cite{USW04} obtained an integral representation of the normalizing constant $Z_N(\xi^2)$ and studied its asymptotics, which induces the limits of the mean particle density and the mean current.
%Later \cite{BW19} used the Askey-Wilson process to prove the limit density profile and the limit fluctuations around the density only in the ``fan region'' $AC<1$.
%In this paper we only study the mean particle density as an example.
%The mean particle density of a system with $N$ sites is defined by:
\begin{definition}\label{defn:mean particle density}
    Consider a particle system with $N$ sites with occupation variables $(\tau_1,\dots,\tau_N)\in\left\{0,1\right\}^N$. 
    The observable $\rho=\frac{1}{N}\sum_{i=1}^N\tau_i$ is called the mean particle density of the system.
\end{definition}
The following limit of the mean particle density has been well-known in physics, see for example \cite{DLS1,SD}. It was obtained in \cite{USW04} by Askey-Wilson polynomials and later in mathematical works \cite{BW17,BW19} by Askey-Wilson processes. More specific asymptotics are also obtained therein.
\begin{theorem} \label{thm: open ASEP particle density}
    Consider the open ASEP with sites $\left\{1,\dots,N\right\}$ and with particle jump rates $(q,\alpha,\beta,\gamma,\delta)$ whose stationary measure is denoted by $\pi(\tau_1,\dots,\tau_N)$. On the fan region $AC<1$ the limits of mean particle density as $N\rightarrow\infty$ are given by:
    \[
\lim_{N\rightarrow\infty}\mathbb{E}_\pi\rho=
\begin{cases}
\frac{1}{2}, & A\leq 1, C\leq 1 \text{ (maximal current phase with boundary)},\\
\frac{A}{A+1}, & A>1, A>C \text{ (high density phase)},\\
\frac{1}{C+1}, & C>1, C>A \text{ (low density phase)}.
\end{cases}
\]
\end{theorem}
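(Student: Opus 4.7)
The plan is to apply the Askey–Wilson process representation of Theorem \ref{thm:askey wilson for open asep} to the generating function of $\pi$, extract the mean density via a logarithmic derivative, and then carry out Laplace-type asymptotics of the resulting integral in each of the three phases. Specializing $t_1=\dots=t_N=t$ in \eqref{eq:generating function pi} gives
\[
\mathbb{E}_\pi\bigl[t^{N\rho}\bigr]=\frac{Z_N(t)}{Z_N(1)},\qquad Z_N(t):=\mathbb{E}\bigl[(1+t+2\sqrt{t}\,Y_t)^N\bigr],
\]
and hence $\mathbb{E}_\pi\rho=\tfrac{1}{N}\partial_t\log Z_N(t)\big|_{t=1}$. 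The problem thus reduces to the $N\to\infty$ asymptotics of $Z_N(t)$ in a neighborhood of $t=1$, which is the standard Askey–Wilson integral worked out in \cite{BW17,BW19}.

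The function $y\mapsto 1+t+2\sqrt{t}\,y$ is increasing, so its maximum over the support of $Y_t$ is attained at the largest point. The support of $\pi_t$ consists of $[-1,1]$ together with atoms at $y_0^A(t)=\tfrac12(A\sqrt{t}+1/(A\sqrt{t}))$ when $A\sqrt{t}>1$ and at $y_0^C(t)=\tfrac12(C/\sqrt{t}+\sqrt{t}/C)$ when $C/\sqrt{t}>1$ (the parameters $B,D\in(-1,0]$ never produce atoms). A direct computation gives
\[
(1+t+2\sqrt{t}\,y)\Big|_{y=1}=(1+\sqrt{t})^2,\quad \Big|_{y_0^A(t)}=(1+At)(1+1/A),\quad\Big|_{y_0^C(t)}=(1+C)(1+t/C).
\]
In the \emph{high density phase} ($A>1$, $A>C$), the atom at $y_0^A(t)$ is present and dominates in a neighborhood of $t=1$; the atomic mass $p_A(t)$ is a smooth positive function there, so $Z_N(t)=p_A(t)\bigl[(1+At)(1+1/A)\bigr]^N(1+o(1))$ uniformly for $t$ near $1$. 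Taking a logarithmic derivative and dividing by $N$ yields $\partial_t\log[(1+At)(1+1/A)]\big|_{t=1}=A/(1+A)$. The \emph{low density phase} ($C>1$, $C>A$) is symmetric, with the atom at $y_0^C(t)$ dominating and producing $\partial_t\log[(1+C)(1+t/C)]\big|_{t=1}=1/(C+1)$.

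In the \emph{maximal current phase} ($A\leq 1$, $C\leq 1$) neither atom is present (or has value $\leq 4$), so the contribution comes from the continuous density \eqref{eq:defn of continuous part density} near the edge $y=1$. Writing $y=\cos\theta$, the factor $|(e^{2i\theta};q)_\infty|^2=4\sin^2\theta\,|(qe^{2i\theta};q)_\infty|^2$ cancels the singular $1/\sqrt{1-y^2}$ and produces $f(y,t)\sim c(t)\sqrt{1-y}$ as $y\to 1$. A standard boundary Laplace analysis then gives $Z_N(t)=C(t)N^{-3/2}(1+\sqrt{t})^{2N}(1+o(1))$, and $\tfrac1N\partial_t\log Z_N(t)\big|_{t=1}\to \partial_t\log(1+\sqrt t)^2\big|_{t=1}=\tfrac12$.

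The main technical obstacle is justifying the interchange of $\lim_{N\to\infty}$ and $\partial_t|_{t=1}$, i.e.\ showing that the asymptotics of $Z_N(t)$ are sufficiently uniform in a neighborhood of $t=1$ so that differentiating the asymptotic equals the asymptotic of the derivative. This is the content of the uniform estimates developed in \cite{BW17,BW19} and can be imported directly: in the atomic phases one controls the continuous-part contribution as $O((1+\sqrt{t})^{2N})$ and checks that this is exponentially subdominant to the atomic contribution uniformly on a neighborhood of $t=1$; in the maximal current phase one uses uniform Laplace estimates on the integral, which also handle the soft boundaries $A=1$ and $C=1$. Once uniformity is in hand the three limits above combine to give \eqref{eq:limit of mean density S6V} with $r=1$, which is the claimed formula.
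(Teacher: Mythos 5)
Your proposal is correct and is essentially the argument the paper itself relies on: the paper never proves Theorem \ref{thm: open ASEP particle density} directly (it cites \cite{DLS1,SD,USW04,BW17,BW19}), but its proof of the generalization, Theorem \ref{thm:main theorem section 3}, is precisely your argument carrying an extra tilt parameter $r$ --- the log-derivative reduction (Corollary \ref{cor:particle density in terms of askey wilson}), dominance of the atoms generated by $A\sqrt{t}$ or $C/\sqrt{t}$ in the high/low density phases, and the $N^{-3/2}(1+\sqrt{t})^{2N}$ edge-Laplace analysis of the continuous part in the maximal current phase (Appendices \ref{section:appendix one} and \ref{section:appendix two}). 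The only place you are lighter than the paper is at the atom resonances $A=q^{-l}$ or $C=q^{-l}$ (including the phase boundaries $A=1$, $C=1$), where the number of atoms of $\pi_t$ changes as $t$ crosses $1$ and differentiating the asymptotics requires extra care; the paper's generalized theorem excludes exactly these cases via the technical condition \eqref{eq:technical condition} and recovers them by a monotone coupling, whereas you defer to the uniform estimates of \cite{BW17,BW19} --- acceptable here, since the statement being proved is itself the result established in those references.
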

\subsection{Limit of mean particle density of six-vertex model on a strip} 
\label{subsec:proof of limit of mean density}
 In this subsection we prove Theorem \ref{thm:main theorem section 3} in the introduction, which gives the limits of mean particle density of the stationary measure of six-vertex model on a strip. The limits exhibit a phase diagram that is a tilting of phase diagram of open ASEP (Figure \ref{fig:phase diagram}).
 
%In this subsection we show that the limit of mean particle density of stationary measure \eqref{eq:relation between tau and pi} of six-vertex model on a strip on a horizontal path exhibit a tilted phase diagram (see Figure \ref{fig:phase diagram} (b)).

We first fix some notations. We will consider the six-vertex model on a strip with bulk parameters $\th_1, \th_2$ and boundary parameters $a, b, c, d$. We will always assume:
    $$a, b, c, d, \th_1, \th_2\in(0,1),\quad \th_1<\th_2,\quad b+d<1.$$
    We define parameter $r=\frac{1-\th_2}{1-\th_1}\in(0,1)$ 
    and parameters $q, \alpha,\beta,\gamma,\delta$ by \eqref{eq: abcd to alphabetathetatheta}:
    $$(q,\alpha,\beta,\gamma,\de)=\lb\frac{\th_1}{\th_2},\frac{(1-\th_1)a}{\th_2},\frac{(1-\th_2)b}{\th_2(1-b-d)},\frac{(1-\th_2)c}{\th_2},\frac{(1-\th_1)d}{\th_2(1-b-d)}\rb.$$
    Last we define $A,B,C,D$ in terms of $(q,\alpha,\beta,\gamma,\de)$ by Definition \ref{defn:parameterization open ASEP}.
We denote the stationary measure on a horizontal down-right path by $\mu(\tau_1,\dots,\tau_N)$.

 We assume that the matrices $\D,\E$, row vector $\ll W|$ and column vector $|V\rr$ satisfy the $\dehp$ algebra \eqref{eq:DEHP algebra} with parameters $(q,\alpha,\beta,\gamma,\delta)$, and that $\ll W|V\rr=1$. As mentioned in Remark \ref{remark:USW}, a concrete example is given by the $\usw$ representation \cite{USW04}. 
%We will always assume that the matrices $\D,\E$ and vectors $\ll W|,|V\rr$ together form the $\usw$ representation (Proposition \ref{prop: USW representation}) of $\dehp$ algebra \eqref{eq:DEHP algebra} with parameters $(q,\alpha,\beta,\gamma,\delta)$. 
Then  by  Theorem \ref{thm:stationary measure S6V} we have that the matrices
\be\label{eq:relations of DE with DuEuDrEr in section 3}
D^\u=\frac{(1-\th_2)}{\th_2}\D,\quad E^\u=\frac{(1-\th_1)}{\th_2}\E,\quad D^\r=D^\u+I,\quad E^\r=E^\u-I,
\ee
and the same boundary vectors $\ll W|,|V\rr$ satisfy compatibility relations \eqref{eq:bulk relations}, \eqref{eq:left boundary relations} and \eqref{eq:right boundary relations} in Theorem \ref{thm: general matrix ansatz construction}. %We  refer this representation of $\Du,\Eu,\Dr,\Er, \ll W|, |V\rr$ also as the USW representation. 
\begin{corollary}\label{cor:generating function stationary measure S6V}
    Suppose $AC<1$. Then for any $0< t_1\leq\dots\leq t_N$, 
    $$\ll W|\prod_{j=1}^{\substack{N\\\longrightarrow}}(\Du t_j+\Eu)|V\rr=\lb\frac{1-\th_1}{\th_2(1-q)}\rb^N\mathbb{E}\lb\prod_{j=1}^N(1+rt_j+2\sqrt{rt_j}Y_{rt_j})\rb.$$
    Hence the generating function of stationary measure $\mu$ can be written as
    \be\label{eq:generating function mu}
    \mathbb{E}_{\mu}\lb\prod_{j=1}^Nt_j^{\tau_j}\rb=\frac{\mathbb{E}\lb\prod_{j=1}^N(1+rt_j+2\sqrt{rt_j}Y_{rt_j})\rb}{\mathbb{E}(1+r+2\sqrt{r}Y_{r})^N},
    \ee 
    where $\left\{Y_t\right\}_{t\geq 0}$ is the Askey–Wilson process with parameters $(A, B, C, D, q)$.
\end{corollary}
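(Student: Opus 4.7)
The plan is to reduce the claim to the Bressaud--Wang (here written as Theorem \ref{thm:askey wilson for open asep}) formula for open ASEP by exploiting the simple substitution \eqref{eq:relations of DE with DuEuDrEr in section 3} that links the six-vertex matrices $D^\u, E^\u$ with the $\dehp$ matrices $\D, \E$. Specifically, I would first rewrite the generating factor as
\begin{equation*}
D^\u t_j + E^\u \;=\; \frac{1-\th_2}{\th_2}\, t_j\, \D + \frac{1-\th_1}{\th_2}\, \E \;=\; \frac{1-\th_1}{\th_2}\bigl(\E + r t_j\, \D\bigr),
\end{equation*}
where $r = (1-\th_2)/(1-\th_1)$. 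Pulling the scalar prefactor out of the product over $j$ gives
\begin{equation*}
\ll W|\prod_{j=1}^{\substack{N\\\longrightarrow}}(D^\u t_j + E^\u)|V\rr \;=\; \lb\frac{1-\th_1}{\th_2}\rb^{\!N}\,\ll W|\prod_{j=1}^{\substack{N\\\longrightarrow}}(\E + r t_j\, \D)|V\rr.
\end{equation*}

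Next, since $0 < t_1 \leq \dots \leq t_N$ implies $0 < rt_1 \leq \dots \leq rt_N$, I would apply Theorem \ref{thm:askey wilson for open asep} with the rescaled times $rt_j$. This is justified because the assumption $AC<1$ matches the hypothesis of that theorem (and, by Remark \ref{remark:fan region implies nonsingular}, the matrix product does not depend on the specific representation of the $\dehp$ algebra, only on $\ll W|V\rr = 1$). The result is
\begin{equation*}
\ll W|\prod_{j=1}^{\substack{N\\\longrightarrow}}(\E + rt_j \D)|V\rr \;=\; \frac{1}{(1-q)^N}\, \mathbb{E}\!\left[\prod_{j=1}^N(1 + rt_j + 2\sqrt{rt_j}\, Y_{rt_j})\right],
\end{equation*}
with $(Y_t)_{t\geq 0}$ the Askey--Wilson process of parameters $(A,B,C,D,q)$. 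Combining the two displays yields the first equality of the corollary.

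Finally, for the generating function of $\mu$, I would specialize Theorem \ref{thm: general matrix ansatz construction} to a horizontal path (so $p_1 = \dots = p_N = \u$) and rearrange:
\begin{equation*}
\mathbb{E}_{\mu}\!\left[\prod_{j=1}^{N}t_j^{\tau_j}\right] \;=\; \frac{\ll W|\prod_{j=1}^{\substack{N\\\longrightarrow}}(E^\u + t_j D^\u)|V\rr}{\ll W|\prod_{j=1}^{\substack{N\\\longrightarrow}}(E^\u + D^\u)|V\rr}.
\end{equation*}
The scalar prefactor $((1-\th_1)/(\th_2(1-q)))^N$ appears in both numerator and denominator and cancels; applying the just-proved identity to the numerator (with ordered $t_1 \leq \dots \leq t_N$) and to the denominator (with all arguments equal to $1$) produces \eqref{eq:generating function mu}.

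There is essentially no obstacle in the proof, as it is a direct substitution followed by invoking Theorem \ref{thm:askey wilson for open asep}. The only minor point that warrants a sentence of care is the non-singularity $\alpha\beta \neq q^l \gamma\delta$ needed to apply Theorem \ref{thm:askey wilson for open asep} for any $\dehp$ solution with $\ll W|V\rr=1$; but this is automatic in the fan region $AC<1$ since $\gamma\de/(\alpha\beta) = ABCD < 1$, as noted in Remark \ref{remark:fan region implies nonsingular}.
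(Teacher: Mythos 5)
Your proposal is correct and follows exactly the route the paper takes: the paper's own proof is a one-line appeal to the substitution \eqref{eq:relations of DE with DuEuDrEr in section 3}, Theorem \ref{thm:askey wilson for open asep}, and Theorem \ref{thm: general matrix ansatz construction}, together with the same non-singularity observation from Remark \ref{remark:fan region implies nonsingular}. Your write-up simply makes explicit the factorization $D^\u t_j + E^\u = \frac{1-\th_1}{\th_2}(\E + rt_j\D)$ and the cancellation of the scalar prefactor, which is precisely what the paper leaves implicit.
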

\begin{proof}
    This is an easy consequence of equation \eqref{eq:relations of DE with DuEuDrEr in section 3}, Theorem \ref{thm:askey wilson for open asep} and Theorem \ref{thm: general matrix ansatz construction}.
     As in Remark \ref{remark:fan region implies nonsingular}, $AC<1$ implies $\alpha\beta\neq q^l\gamma\delta$ for any $l=0,1,\dots$, which, by Remark \ref{remark:nonsingular}, guarantees that the denominator in the matrix product ansatz is nonzero. 
\end{proof}
\begin{corollary}\label{cor:particle density in terms of askey wilson}
    Consider the mean density $\rho=\frac{1}{N}\sum_{i=1}^N\tau_i$ under stationary measure $\mu$ of the  particle system of six-vertex model on a strip on a horizontal path.  If $AC<1$, then we have 
    $$\mathbb{E}_{\mu}\rho=\frac{\pt Z_N(t)\po}{NZ_N(1)},$$
    where $Z_N(t)$ is given by 
    \be \label{eq:normalizing constant in terms of Askey-Wilson}
     Z_N(t)=\lb\frac{1-\th_1}{\th_2(1-q)}\rb^N\mathbb{E}(1+rt+2\sqrt{rt}Y_{rt})^N,
    \ee
    where $\left\{Y_t\right\}_{t\geq 0}$ is the Askey–Wilson process with parameters $(A, B, C, D, q)$. 
\end{corollary}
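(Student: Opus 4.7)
The plan is to read off Corollary~\ref{cor:particle density in terms of askey wilson} directly from Corollary~\ref{cor:generating function stationary measure S6V} by specializing all $t_j$'s to a common value $t$ and then differentiating in $t$ at $t=1$. Since $\rho=\frac{1}{N}\sum_{i=1}^N\tau_i$ and the $\tau_i$'s take values in $\{0,1\}$, the random variable $N\rho$ is a non-negative integer bounded by $N$, so generating-function methods are clean and no convergence issues arise.

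First, I would set $t_1=\cdots=t_N=t>0$ (which is allowed under the hypothesis $0<t_1\leq\cdots\leq t_N$ of Corollary~\ref{cor:generating function stationary measure S6V}) to obtain
\begin{equation*}
\mathbb{E}_\mu\bigl[t^{N\rho}\bigr]=\mathbb{E}_\mu\Bigl[\prod_{j=1}^N t^{\tau_j}\Bigr]=\frac{\mathbb{E}\bigl[(1+rt+2\sqrt{rt}\,Y_{rt})^N\bigr]}{\mathbb{E}\bigl[(1+r+2\sqrt{r}\,Y_r)^N\bigr]}=\frac{Z_N(t)}{Z_N(1)},
\end{equation*}
where the last equality is just the definition \eqref{eq:normalizing constant in terms of Askey-Wilson} of $Z_N(t)$. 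Note that the hypothesis $AC<1$ is used exactly to invoke Corollary~\ref{cor:generating function stationary measure S6V}.

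Next, I would differentiate both sides in $t$ and evaluate at $t=1$. On the left,
\begin{equation*}
\frac{d}{dt}\mathbb{E}_\mu\bigl[t^{N\rho}\bigr]\Big|_{t=1}=\mathbb{E}_\mu\bigl[N\rho\cdot t^{N\rho-1}\bigr]\Big|_{t=1}=N\,\mathbb{E}_\mu\rho,
\end{equation*}
where exchanging the derivative and the finite sum over $\{0,1\}^N$ is immediate. On the right, one gets $\partial_t Z_N(t)|_{t=1}/Z_N(1)$ directly. Equating the two expressions and dividing by $N$ yields the claim.

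There is no real obstacle here: the only mildly delicate point is confirming that $Z_N(t)$ is well-defined and differentiable at $t=1$, but this follows because $(1+rt+2\sqrt{rt}\,Y_{rt})^N$ is a polynomial in the moments $\mathbb{E}[Y_{rt}^k]$ for $k\leq N$, which depend smoothly on $t$ through the Askey--Wilson marginal at parameter $rt$ (with $AC<1$ ensuring we stay within the valid parameter range of the Askey--Wilson process on $[0,\infty)$, as discussed in subsection~\ref{subsec:background AW process}). Hence the argument reduces to a one-line specialization of the generating function followed by a standard differentiation in the formal parameter $t$.
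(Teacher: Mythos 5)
Your proposal is correct and is essentially the paper's own argument: the paper computes $\mathbb{E}_\mu\tau_i$ from the matrix ansatz, sums over $i$, and recognizes the result as $\pt Z_N(t)\po/(NZ_N(1))$ with $Z_N(t)=\ll W|(\Du t+\Eu)^N|V\rr$, then invokes Corollary \ref{cor:generating function stationary measure S6V} to rewrite $Z_N(t)$ in Askey--Wilson form — which is the same computation as your specialization $t_1=\dots=t_N=t$ followed by differentiation at $t=1$. Your remark on differentiability is handled even more simply by noting that $Z_N(t)/Z_N(1)=\mathbb{E}_\mu[t^{N\rho}]$ is a polynomial in $t$ on $(0,\infty)$, so $Z_N$ is automatically smooth there.
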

\begin{proof}
%We use USW representation of $\Du,\Eu,\Dr,\Er,\ll W|,|V\rr$. 
By Theorem \ref{thm: general matrix ansatz construction}, we have: 
    $$\mathbb{E}_\mu\tau_i=\mathbb{P}_\mu(\tau_i=1)=\frac{\ll W|(\Du+\Eu)^{i-1}\Du(\Du+\Eu)^{N-i}|V\rr}{\ll W|(\Du +\Eu)^N|V\rr}.$$
    Summing over $i=1,\dots,N$ we get $$\mathbb{E}_{\mu}\rho=\frac{\pt Z_N(t)\po}{NZ_N(1)},$$
    where  
    $$Z_N(t)=\ll W|(\Du t+\Eu)^N|V\rr,\quad\forall t\geq 0.$$
    is the normalizing constant.
   By Corollary \ref{cor:generating function stationary measure S6V} we can write  $Z_N(t)$ in the form \eqref{eq:normalizing constant in terms of Askey-Wilson}.
\end{proof}

%Analogous to the ASEP, the joint generating function of the stationary measure can be written in terms of the Askey-Wilson process as follows:
%\begin{corollary}\label{cor:stationary measure six vertex model}
    %Suppose $AC<1$. 
    %Then for $0< t_1\leq\dots\leq t_N$, the joint generating function of the stationary measure $\mu$ of the six-vertex model is given by
    %\be\label{eq:generating function mu}
    %\mathbb{E}_{\mu}\lb\prod_{j=1}^Nt_j^{\tau_j}\rb=\frac{\mathbb{E}[\prod_{j=1}^N(1+rt_j+2\sqrt{rt_j}Y_{rt_j})]}{\mathbb{E}(1+r+2\sqrt{r}Y_{r})^N},
    %\ee 
    %where $\left\{Y_t\right\}_{t\geq 0}$ is the Askey–Wilson process with parameters $(A, B, C, D, q)$.
%\end{corollary}
%\begin{proof}
    %This is an easy consequence of Corollary \ref{cor:relation with stationary measure of open ASEP} and Theorem \ref{thm:askey wilson for open asep}. We have
    %$$Z=\sum_{\tau_1,\dots,\tau_N}r^{\sum\tau_i}\pi(\tau_1,\dots,\tau_N)=\mathbb{E}_{\pi}\lb\prod_{i=1}^Nr^{\tau_i}\rb.$$
    %Hence $$\mathbb{E}_{\mu}\lb\prod_{i=1}^Nt_i^{\tau_i}\rb=\sum_{\tau_1,\dots,\tau_N}\lb\prod_{i=1}^Nt_i^{\tau_i}\rb r^{\sum\tau_i}\pi(\tau_1,\dots,\tau_N)/Z=\frac{\mathbb{E}_{\pi}\lb\prod_{i=1}^N(rt_i)^{\tau_i}\rb}{\mathbb{E}_{\pi}\lb\prod_{i=1}^Nr^{\tau_i}\rb},$$
%which combined with the generating function \eqref{eq:generating function pi} of $\pi$ gives the generating function \eqref{eq:generating function mu} of $\mu$.
%\end{proof}
\begin{definition}
    %The left/right particle densities are defined as
    %$$\rho_\ell=\frac{r}{C+r},\quad \rho_r=\frac{Ar}{Ar+1}.$$
    We define the phase diagram of six-vertex model on a strip on a horizontal path.
    
    We first define two regions:
    \begin{enumerate}
        \item [$\bullet$] (fan region) $AC<1$,
        \item [$\bullet$] (shock region) $AC>1$.
    \end{enumerate}
    
    On the fan region we define three phases:
    \begin{enumerate}
        \item [$\bullet$] (maximal current phase) $A<1/\sqrt{r}$, $C<\sqrt{r}$,
        \item [$\bullet$] (high density phase) $A>1/\sqrt{r}$, $AC<1$,
        \item [$\bullet$] (low density phase) $C>\sqrt{r}$, $AC<1$.
    \end{enumerate}
    See Figure \ref{fig:phase diagram} (b) where the shadowed area denote the fan region, and the three phases are labeled.
\end{definition}
\begin{remark}\label{remark:fan region and extension}
    We have defined the three phases only inside the fan region $AC<1$, because Theorem \ref{thm:askey wilson for open asep} only works inside this region so our techniques can only produce results therein. After the initial submission of this paper, \cite{WWY} constructed  Askey-Wilson signed measures and rigorously obtained the density and fluctuations of open ASEP in the shock region $AC>1$, including the `coexistence line' $A=C>1$. It is possible that, by applying similar techniques, we can also extend the phase diagram of the six-vertex model on a strip to include the shock region. We defer this aspect to future research. 
\end{remark}

 We now provide the proof of Theorem \ref{thm:main theorem section 3} in the introduction. 
\begin{proof}[Proof of Theorem \ref{thm:main theorem section 3}]
We first prove this theorem under condition \eqref{eq:technical condition}. At the end of the proof we show that when $a+c<1$ we can avoid this assumption by a two-species attractive coupling argument (Proposition \ref{prop:coupling}) similar to the one in \cite[section 5]{Corwin_Knizel}, which is of independent interest. We denote:
    $$\tA=A\sqrt{r},\quad\tB=B\sqrt{r},\quad\tC=C/\sqrt{r},\quad\tD=D/\sqrt{r}.$$
    By  \eqref{eq:condition Thm 1.1} and $AC<1$ we have:
    \be\label{eq:conditions on tAtBtCtD}
    \tA,\tC> 0,\quad\tA\tC<1,\quad \tB\in(-\sqrt{r},0),\quad \tD\in(-1/\sqrt{r},0).
    \ee
Denote $\zt=\mathbb{E}(1+rt+2\sqrt{rt}Y_{rt})^N$. By Corollary \ref{cor:particle density in terms of askey wilson} we have $$Z_N(t)=\lb\frac{1-\th_1}{\th_2(1-q)}\rb^N\zt.$$ Write:
\begin{equation}\label{eq:partition function}
    \begin{split}
        \zt&=\mathbb{E}(1+rt+2\sqrt{rt}Y_{rt})^N\\
        &=\int_{-\infty}^\infty(1+rt+2\sqrt{rt}y)^N\nu(dy,\ttA,\ttB,\ttC,\ttD,q)\\
        &=\int_{-1}^1(1+rt+2\sqrt{rt}y)^Nf(y,\ttA,\ttB,\ttC,\ttD,q)dy\\
        &\quad+\sum_{y_j(t)\in F(t)}(1+rt+2\sqrt{rt}y_j(t))^Np(y_j(t);\ttA,\ttB,\ttC,\ttD,q),
    \end{split}
\end{equation}
where $$y_j(t)\in F(t)=F(\ttA,\ttB,\ttC,\ttD,q)$$ are atoms generated by $\ttA,\ttB,\ttC,\ttD$ and
\begin{equation*}
\begin{split}
    &f(y,\ttA,\ttB,\frac{\tC}{\sqrt{t}},\frac{\tD}{\sqrt{t}},q)\\
    &=\frac{(q,t\tA\tB,\tA\tC,\tA\tD,\tB\tC,\tB\tD,\tC\tD/t;q)_{\infty}}{2\pi(\tA\tB\tC\tD;q)_{\infty}\sqrt{1-y^2}}\left\vert\frac{(e^{2i\th_y};q)_{\infty}}{(\ttA e^{i\th_y},\ttB e^{i\th_y},\frac{\tC}{\sqrt{t}} e^{i\th_y},\frac{\tD}{\sqrt{t}} e^{i\th_y};q)_{\infty}}\right\vert^2
\end{split} 
\end{equation*}
is the continuous part density, where $y=\cos\th_y\in[-1,1]$. In the following we denote
\be   \label{eq:definition of pi t y}
\pi_t(y):=f(y,\ttA,\ttB,\ttC,\ttD,q).
\ee

We make some observations on Askey-Wilson measure $\nu(dy,\ttA,\ttB,\ttC,\ttD,q)$ as $t\rightarrow 1$. We first look at the behavior of atoms. By \eqref{eq:conditions on tAtBtCtD} all the possible atoms are generated by $\tA\sqrt{t},\ttC$ and $\ttD$. In the high density phase $\tA>1$ there are atoms generated by $\tA\sqrt{t}$  and in the low density phase $\tC>1$ there are atoms generated by $\ttC$. In all three phases there may be atoms generated by $\ttD$.  
By the technical condition \eqref{eq:technical condition}, for $t$ in a small neighborhood of $1$, the number of atoms is constant. The positions of atoms $y_j(t)$ and the corresponding masses $p(y_j(t);\ttA,\ttB,\ttC,\ttD,q)$ are  smooth functions of $t$. 
\begin{fact}\label{fact in proof discrete}
    The Askey-Wilson measure $\nu(dy,\tA,\tB,\tC,\tD,q)$ is supported inside $(-(1+r)/2\sqrt{r},\infty)$, where the function $(1+r+2\sqrt{r}y)^N$ of $y$ is strictly positive and strictly increasing.
\end{fact}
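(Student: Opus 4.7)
The plan is to split $\nu(dy,\tA,\tB,\tC,\tD,q)$ into its absolutely continuous part and its (finite) atomic part, show that each component lies strictly inside $(-(1+r)/(2\sqrt{r}),\infty)$, and then observe that on this interval the function $1+r+2\sqrt{r}y$ vanishes at the left endpoint and has derivative $2\sqrt{r}>0$, so $(1+r+2\sqrt{r}y)^N$ is strictly positive and strictly increasing there.

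The continuous part is easy: by the formula \eqref{eq:defn of continuous part density} it is supported on $[-1,1]$, and AM-GM applied to $\sqrt{r}$ and $1/\sqrt{r}$ gives $(1+r)/(2\sqrt{r})\geq 1$, so $[-1,1]\subset(-(1+r)/(2\sqrt{r}),\infty)$ with room to spare.

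For the atoms I would use \eqref{eq:conditions on tAtBtCtD} to enumerate cases. Since $|\tB|<\sqrt{r}<1$, the parameter $\tB$ produces no atoms. Any atoms generated by $\tA$ or $\tC$ are of the form $\tfrac{1}{2}(\chi q^j+1/(\chi q^j))$ with $\chi q^j>1$, hence are at least $1$ by AM-GM and automatically lie in the target region. The only case with content is $\tD$, where I will use crucially that $\tD\in(-1/\sqrt{r},0)$.

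For an atom generated by $\tD$, set $u=-\tD q^j$, where $u>1$ is exactly the condition that the atom exists, so that $y_j=-\tfrac{1}{2}(u+u^{-1})$. Since $q\in(0,1)$ and $-\tD<1/\sqrt{r}$, we have $u<1/\sqrt{r}$; because $u\mapsto u+u^{-1}$ is strictly increasing on $[1,\infty)$, this forces
\[
u+u^{-1}<\tfrac{1}{\sqrt{r}}+\sqrt{r}=\tfrac{1+r}{\sqrt{r}},
\]
and therefore $y_j>-(1+r)/(2\sqrt{r})$, as required. This monotonicity bound on $u+u^{-1}$, combined with the precise interval $\tD\in(-1/\sqrt{r},0)$ appearing in \eqref{eq:conditions on tAtBtCtD}, is really the only nontrivial step; everything else is bookkeeping across the possible sources of atoms.
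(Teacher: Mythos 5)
Your proof is correct and follows essentially the same route as the paper: decompose $\nu$ into its continuous part (supported on $[-1,1]$) plus atoms, note that atoms from $\tA$ and $\tC$ lie in $[1,\infty)$ and that $\tB$ generates none, and bound the atoms from $\tD$ below via the monotonicity of $x\mapsto x+x^{-1}$ together with $\tD\in(-1/\sqrt{r},0)$. Your substitution $u=-\tD q^j$ is just a positive-variable rephrasing of the paper's inequality $\frac{1}{2}(q^j\tD+\frac{1}{q^j\tD})\geq\frac{1}{2}(\tD+\frac{1}{\tD})>-\frac{1+r}{2\sqrt{r}}$, so no substantive difference.
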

\begin{proof}[Proof of Fact \ref{fact in proof discrete}]
    The continuous part is supported on $[-1,1]$. The atoms generated by $\tA$ and by $\tC$ lie in $[1,\infty)$. When $\tD$ generates atoms, we have $\tD\in(-1/\sqrt{r},-1)$, hence the atoms generated by $\tD$ are bounded below by 
$\frac{1}{2}(q^i\tD+\frac{1}{q^i\tD})\geq\frac{1}{2}(\tD+\frac{1}{\tD})>-\frac{1+r}{2\sqrt{r}}$, for $i=0,1,\dots$
\end{proof}

We then look at the behavior of continuous part density.
\begin{fact}\label{fact in the proof continuous}
     There is a smooth function $g(t,z)$  on a small neighborhood of $\left\{1\right\}\times\left\{|z|=1\right\}\subset\mathbb{C}\times\mathbb{C}$ which cannot take the value $0$, such that:
     \be  \label{eq: continuous part density other expression}
    \pi_t(y)=\sqrt{1-y^2}g(t,z),
    \ee
    where $z=e^{i\th_y}$ for $y\in[-1,1]$. In particular both $\pi_t(y)$ and $\pt\pi_t(y)$
    are both bounded on $(t,y)\in(1-\ep,1+\ep)\times[-1,1]$ for some $\ep>0$, hence 
    $\pt\lb(1+rt+2\sqrt{rt}y)^N\pi_t(y)\rb$ is also bounded on this region. 
    Hence  we can take differentiation under the integral sign in $\pt\lb\int_{-1}^1(1+rt+2\sqrt{rt}y)^N\pi_t(y)dy\rb\po$.
\end{fact}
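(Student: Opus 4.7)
The plan is to extract the advertised factor of $\sqrt{1-y^2}$ by isolating the $j=0$ term in $(e^{2i\theta_y};q)_\infty$, and then to recognize that everything that remains extends holomorphically in $z=e^{i\theta_y}$ off the unit circle. Writing $y=\cos\theta_y$, one has the identity
\begin{equation*}
|(e^{2i\theta_y};q)_\infty|^2 = |1-e^{2i\theta_y}|^2\,|(qe^{2i\theta_y};q)_\infty|^2 = 4(1-y^2)\,|(qe^{2i\theta_y};q)_\infty|^2,
\end{equation*}
so that when this identity is plugged into the formula \eqref{eq:defn of continuous part density} for $\pi_t(y)$, the $1/\sqrt{1-y^2}$ in the prefactor turns into a single $\sqrt{1-y^2}$ in the numerator, and every remaining term involves only $q$-Pochhammer symbols with arguments depending smoothly on $t$ and on $z=e^{i\theta_y}$ through the monomials $z$ and $1/z$.

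Next, I would define $g(t,z)$ by the explicit formula
\begin{equation*}
g(t,z)=\frac{2(q,t\tA\tB,\tA\tC,\tA\tD,\tB\tC,\tB\tD,\tC\tD/t;q)_\infty\,(qz^2,q/z^2;q)_\infty}{\pi(\tA\tB\tC\tD;q)_\infty\,(\ttA z,\ttA/z,\ttB z,\ttB/z,\ttC z,\ttC/z,\ttD z,\ttD/z;q)_\infty},
\end{equation*}
using the standard identification $|(\alpha e^{i\theta_y};q)_\infty|^2=(\alpha z;q)_\infty(\alpha/z;q)_\infty$ valid on $|z|=1$. Since each $q$-Pochhammer $(w;q)_\infty=\prod_{j\ge 0}(1-wq^j)$ is entire in $w$, the numerator is jointly analytic (hence smooth) in $(t,z)$ wherever it is defined, and by construction $\pi_t(y)=\sqrt{1-y^2}\,g(t,z)$ on the unit circle.

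The key remaining point is non-vanishing of $g$ on a neighborhood of $\{1\}\times\{|z|=1\}$. The numerator $q$-Pochhammer factors in $g$ never vanish, since $|q|<1$ and the arguments $t\tA\tB,\tA\tC,\tA\tD,\tB\tC,\tB\tD,\tC\tD/t$ avoid $\{q^{-l}:l\ge 0\}$ for $t$ near $1$ (they are small in absolute value by \eqref{eq:conditions on tAtBtCtD}), and $(qz^2,q/z^2;q)_\infty$ cannot vanish for $|z|$ close to $1$ because $|qz^2 q^j|<1$ for $j\ge 0$ once $|z|^2<q^{-1}$. For the denominator to cause trouble, some factor $(\ttA z;q)_\infty$, $(\ttA/z;q)_\infty$, etc., would have to vanish, which would require $\ttA\sqrt{t}z^{\pm 1}=q^{-l}$ (and similarly for $\tB,\tC,\tD$) for some $l\ge 0$. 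The technical condition \eqref{eq:technical condition} ensures that this does not happen at $t=1,|z|=1$, and by continuity fails to happen on a small open neighborhood. This gives the required smoothness and non-vanishing of $g$.

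Finally, the two boundedness claims and the differentiation-under-integral claim follow at once. Since $g(t,z)$ is continuous on the compact set $[1-\ep,1+\ep]\times\{|z|=1\}$, both $g$ and $\pt g$ are bounded there, so $\pi_t(y)=\sqrt{1-y^2}\,g(t,e^{i\th_y})$ and $\pt\pi_t(y)=\sqrt{1-y^2}\,\pt g(t,e^{i\th_y})$ are uniformly bounded on $(1-\ep,1+\ep)\times[-1,1]$. Multiplying by the polynomial $(1+rt+2\sqrt{rt}y)^N$, which is $C^\infty$ and bounded (together with its $t$-derivative) on the same compact region, shows that $\pt\big((1+rt+2\sqrt{rt}y)^N\pi_t(y)\big)$ is bounded, and dominated convergence legitimizes interchanging $\pt$ with $\int_{-1}^1\cdot\,dy$ at $t=1$. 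The only mild subtlety is the verification of non-vanishing of $g$, which is the only place where the technical hypothesis \eqref{eq:technical condition} enters; everything else is a direct unpacking of the density formula.
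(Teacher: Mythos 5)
Your proposal is correct and follows essentially the same route as the paper: extract $4(1-y^2)$ from the $j=0$ factor of $(e^{2i\th_y};q)_\infty$, define $g(t,z)$ from the remaining $q$-Pochhammer symbols, and invoke the technical condition \eqref{eq:technical condition} to keep the denominator factors bounded away from zero near $t=1$, $|z|=1$. Your rewriting of $|(\alpha e^{i\th_y};q)_\infty|^2$ as $(\alpha z;q)_\infty(\alpha/z;q)_\infty$ and your explicit check that the numerator never vanishes are slightly more careful than the paper's proof, but the substance is the same.
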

\begin{proof} [Proof of Fact \ref{fact in the proof continuous}]
    In numerator of $\pi_t(y)$ we have
$|(e^{2i\th_y};q)_\infty|^2=4(1-y^2)|(e^{2i\th_y}q;q)_\infty|^2$. Define:
$$g(t,z)=\frac{2(q,t\tA\tB,\tA\tC,\tA\tD,\tB\tC,\tB\tD,\tC\tD/t;q)_{\infty}}{\pi(\tA\tB\tC\tD;q)_{\infty}}\left\vert\frac{(qz^2;q)_{\infty}}{(\ttA z,\ttB z,\frac{\tC}{\sqrt{t}}z,\frac{\tD}{\sqrt{t}}z;q)_{\infty}}\right\vert^2.$$
By condition \eqref{eq:technical condition}, when $t$ and $|z|$ are close to $1$, $|1-sq^j|>\ep$ for some $\ep>0$, $s=\ttA z,\ttB z,\frac{\tC}{\sqrt{t}}z,\frac{\tD}{\sqrt{t}}z$ and $j=0,1,2,\dots$ Hence by the analyticity of $q$-Pochhammer symbols, $g(t,z)$ is smooth.
\end{proof}

We return to the proof of the theorem. 

\begin{enumerate}
    \item [(\upperRomannumeral{1})] (High density phase $\tA>1$).
    When $t\rightarrow 1$ there are atoms generated by $\ttA$ and also possible atoms generated by $\ttD$. 
    %We have seen that all the atoms are supported in the strictly increasing interval of the function $(1+rt+2\sqrt{rt}y)^N$ of $y$, and that we can take differentiation under the integral sign in $\pt\int_{-1}^1(1+rt+2\sqrt{rt}y)^N\pi_t(y)$. 
    By Facts \ref{fact in proof discrete} and \ref{fact in the proof continuous} we can observe that as $N\rightarrow\infty$ both $\zn$ and $\pt\zt\po$ are dominated by largest atom 
    $y_0(t)=\frac{1}{2}\lb\tA\sqrt{t}+\frac{1}{\tA\sqrt{t}}\rb,$ i.e.
\be  \label{eq:partition function high density}
\zn\sim (1+r+2\sqrt{r}y_0(1))^Np(y_0(1),\tA,\tB,\tC,\tD,q),
\ee
\be  \label{eq:partition function derivative high density}
\pt\zt\po\sim N\pt(1+rt+2\sqrt{rt}y_0(t))\po(1+r+2\sqrt{r}y_0(1))^{N-1}p(y_0(1),\tA,\tB,\tC,\tD,q),
\ee
where we use $f(N)\sim g(N)$ to denote $\lim_{N\rightarrow\infty}f(N)/g(N)=1$. The details are provided in Appendix \ref{section:appendix one}. 
Hence we have
$$\lim_{N\rightarrow\infty}\mathbb{E}_\mu\rho=\frac{\pt(1+rt+2\sqrt{rt}y_0(t))\po}{(1+r+2\sqrt{r}y_0(1))}=\frac{\pt(1+rt+Art+1/A)\po}{1+r+Ar+1/A}=\frac{Ar}{Ar+1}.$$
\item [(\upperRomannumeral{2})] (Low density phase $\tC>1$). The proof is similar to the high density phase. Both $\zn$ and $\pt\zt\po$ are dominated by the largest atom $y_0(t)=\frac{1}{2}\lb\frac{\tC}{\sqrt{t}}+\frac{\sqrt{t}}{\tC}\rb.$ Hence
$$\lim_{N\rightarrow\infty}\mathbb{E}_\mu\rho=\frac{\pt(1+rt+2\sqrt{rt}y_0(t))\po}{(1+r+2\sqrt{r}y_0(1))}=\frac{\pt(1+rt+C+rt/C)\po}{1+r+C+r/C}=\frac{r}{C+r}.$$
\item [(\upperRomannumeral{3})] (Maximal current phase $\tA,\tC<1$). The only possible atoms come from $\ttD$.
By Fact \ref{fact in the proof continuous} we have $|\pt \pi_t(y)\po|\leq M\sqrt{1-y^2}$ and $\pi_1(y)>\ep\sqrt{1-y^2}$
on $y\in[-1,1]$, for some $M,\ep>0$.
We can observe that as $N\rightarrow\infty$ we have:
%both $\zn$ and $\pt\zt$ are dominated by the continuous part:
\be   \label{eq:partition function maximal current}
\zn\sim\int_{-1}^1(1+r+2\sqrt{r}y)^N\pi_1(y)dy,
\ee
\be   \label{eq:partition function derivative maximal current}
\frac{\pt\zt\po}{N\zn}\sim\frac{\int_{-1}^1(1+r+2\sqrt{r}y)^{N-1}(r+\sr y)\pi_1(y)dy}{\int_{-1}^1(1+r+2\sqrt{r}y)^N\pi_1(y)dy}.
\ee   
%\be   \label{eq:partition function derivative maximal current}
%\pt\zt\sim N\int_{-1}^1(1+r+2\sqrt{r}y)^{N-1}(r+\sr y)\pi_1(y)dy.
%\ee
The details are provided in Appendix \ref{section:appendix two}.

We use a similar method as \cite[section 4.2]{BW19} to obtain the asymptotics.
We first write
\begin{equation*}
\begin{split}
    \pi_1(y)&=\frac{(q,\tA\tB,\tA\tC,\tA\tD,\tB\tC,\tB\tD,\tC\tD;q)_{\infty}}{2\pi(\tA\tB\tC\tD;q)_{\infty}\sqrt{1-y^2}}\left\vert\frac{(e^{2i\th_y};q)_{\infty}}{(\tA e^{i\th_y},\tB e^{i\th_y},\tC e^{i\th_y},\tD e^{i\th_y};q)_{\infty}}\right\vert^2\\
    &=\sqrt{1-y^2}\frac{2(q,\tA\tB,\tA\tC,\tA\tD,\tB\tC,\tB\tD,\tC\tD;q)_{\infty}}{\pi(\tA\tB\tC\tD;q)_{\infty}\vert(\tA e^{i\th_y},\tB e^{i\th_y},\tC e^{i\th_y},\tD e^{i\th_y};q)_{\infty}\vert^2}|(qe^{2i\th_y};q)_\infty|^2.
\end{split}
\end{equation*}
Set $y=1-\frac{u}{2N}$. Fix $u>0$, as $N\rightarrow\infty$ we have $e^{i\th_y}\rightarrow 1$ and $(qe^{2i\th_y};q)_\infty\rightarrow (q;q)_\infty$. Hence
$$\pi_1(y)\sim 2\mathfrak{c}\sqrt{\frac{u}{N}}, \quad\text{and}\quad\pi_1(y)\leq M\sqrt{\frac{u}{N}},$$
where 
$$\mathfrak{c}=\frac{(q;q)_\infty^3(\tA\tB,\tA\tC,\tA\tD,\tB\tC,\tB\tD,\tC\tD;q)_\infty}{\pi(\tA\tB\tC\tD;q)_{\infty}(\tA,\tB,\tC,\tD;q)_\infty^2},$$
and $M$ is a large constant (may be different from $M$ used before). We write:
\begin{equation*}
    \begin{split}
        &\int_{-1}^1(1+r+2\sqrt{r}y)^N\pi_1(y)dy\\&=\int_0^\infty 1_{u\leq 4N}\lb 1+r+2\sr-\frac{u\sr}{N}\rb^N\pi_1\lb 1-\frac{u}{2N}\rb\frac{du}{2N}\\
        &=\frac{(1+\sr)^{2N}}{2N^{\frac{3}{2}}}\int_0^\infty 1_{u\leq 4N}\lb 1-\frac{u\sr}{N(1+\sr)^2}\rb^N\pi_1\lb 1-\frac{u}{2N}\rb \sqrt{N} du.
    \end{split}
\end{equation*}
As $N\rightarrow\infty$ the integrand is bounded above by a constant times $\exp\lb-\frac{u\sr}{(1+\sr)^2}\rb\sqrt{u}$, which is integrable on $u\in(0,\infty)$. We can use dominated convergence theorem to take $N\rightarrow\infty$:
\begin{equation*}
    \begin{split}
        \int_{-1}^1(1+r+2\sqrt{r}y)^N\pi_1(y)dy&\sim\frac{(1+\sr)^{2N}}{N^{\frac{3}{2}}}\mathfrak{c}\int_0^\infty \exp\lb-\frac{u\sr}{(1+\sr)^2}\rb\sqrt{u} du\\
        &=2\mathfrak{c}\frac{(1+\sr)^{2N}}{N^{\frac{3}{2}}}\int_0^\infty \exp\lb-\frac{\sr}{(1+\sr)^2}t^2\rb t^2dt\\
        &=\frac{\sqrt{\pi}}{2}\mathfrak{c}\frac{(1+\sr)^{2N+3}}{N^{\frac{3}{2}}r^{\frac{3}{4}}},
    \end{split}
\end{equation*}
where we have used $\int_0^\infty e^{-st^2}t^2dt=\frac{1}{4}\sqrt{\frac{\pi}{s^3}}$ for $s>0$ and took $s=\frac{\sr}{(1+\sr)^2}$. Hence:
\begin{equation*}
  \begin{split}
      \mathbb{E}_\mu\rho=\frac{\pt\zt\po}{N\zn} &\sim\frac{\int_{-1}^1(1+r+2\sqrt{r}y)^{N-1}(r+\sr)\pi_1(y)dy}{\int_{-1}^1(1+r+2\sqrt{r}y)^N\pi_1(y)dy}\\
      &=\frac{1}{2}+\frac{r-1}{2}\frac{\int_{-1}^1(1+r+2\sqrt{r}y)^{N-1}\pi_1(y)dy}{\int_{-1}^1(1+r+2\sqrt{r}y)^N\pi_1(y)dy}\\
      &\sim\frac{1}{2}+\frac{r-1}{2(1+\sqrt{r})^2}=\frac{\sr}{1+\sr}.
  \end{split}
\end{equation*}

%Similarly we can compute
%\be   \label{eq:second integral maximal current}
%\int_{-1}^1(1+r+2\sqrt{r}y)^{N-1}(r+\sr y)\pi_1(y)dy\sim (r+\sr)\frac{\sqrt{\pi}}{2}\mathfrak{c}\frac{(1+\sr)^{2N+1}}{(N-1)^{\frac{3}{2}}r^{\frac{3}{4}}},
%\ee
%-\frac{3}{8Nr^{\frac{3}{4}}}\mathfrak{c}\frac{(1+\sr)^{2N+3}}{(N-1)^{\frac{3}{2}}}
%the details are provided in Appendix \ref{section:appendix three}.
%Hence
%$$\mathbb{E}_\mu\rho\sim\frac{\int_{-1}^1(1+r+2\sqrt{r}y)^{N-1}(r+\sr)\pi_1(y)dy}{\int_{-1}^1(1+r+2\sqrt{r}y)^N\pi_1(y)dy}\sim\frac{\sr}{1+\sr}.$$
\end{enumerate}

We show that under an extra condition $a+c<1$ we can avoid technical condition \eqref{eq:technical condition}:
$$|A\sqrt{r}|,|B\sqrt{r}|,|C/\sqrt{r}|,|D/\sqrt{r}|\neq q^{-i} \text{ for }i=0,1,2,\dots$$ 
Denote the right hand side of \eqref{eq:limit of mean density S6V} as $\rho(A,C)$, which is a continuous function of $A,C$ and hence a continuous function of $a,b,c,d,\th_1,\th_2$. 
Suppose $(a,b,c,d,\th_1,\th_2)$ satisfy \eqref{eq:condition Thm 1.1}, $AC<1$ and $a+c<1$.
We can choose a sequence $(a_n,b_n,c_n,d_n)\in(0,1)^4$, $n=1,2\dots$ approaching $(a,b,c,d)$, such that 
$$a_n\leq a,\quad b_n\geq b,\quad c_n\geq c,\quad d_n\leq d,\quad a+c_n<1,\quad b_n+d<1,$$
and the corresponding parameters $(A_n,B_n,C_n,D_n)$ (Definition \ref{defn:parameterization open ASEP}) satisfy $A_nC_n<1$ and
$$|A_n\sqrt{r}|,|B_n\sqrt{r}|,|C_n/\sqrt{r}|,|D_n/\sqrt{r}|\neq q^{-i} \text{ for }i=0,1,2,\dots$$ 
Denote by $\Pi_N^n$ the six-vertex model on a strip with width $N$ and with parameters $(a_n,b_n,c_n,d_n,\th_1,\th_2)$, for $n=1,2\dots$ Denote the six-vertex model on a strip with width $N$ with parameters $(a,b,c,d,\th_1,\th_2)$ by $\Pi_N$. Assume these models have empty initial condition. By Proposition \ref{prop:coupling} we have a coupling of $\Pi_N^n$ with $\Pi_N$ such that their occupation variables satisfy $\eta_N^n(\se)\leq\eta_N(\se)$ for every edge $\se$ of the strip, where $\eta_N^n$ is the occupation variable of $\Pi_N^n$ and $\eta_N$ is the occupation variable of $\Pi_N$. We then consider the interacting particle systems on a horizontal path related to those six-vertex models. Since they are finite ergodic Markov chains, as time (vertical coordinate) goes to infinity they converge to their stationary measures $\mu_N^n$ and $\mu_N$. 
%Consider the six-vertex model on a strip $\Pi_N^n$ with width $N$ and with parameters $(a_n,b_n,c_n,d_n,\th_1,\th_2)$, 
%and denote its stationary measure on a horizontal path by $\mu_N^n$
%for $n=1,2\dots$. The model with width $N$ with parameters $(a,b,c,d,\th_1,\th_2)$ is denoted by $\Pi_N$. 
%and stationary measure is denoted by $\mu_N$. 
%Suppose these models have empty initial condition.
%By Proposition \ref{prop:coupling} we have a coupling of $\Pi_N^n$ with $\Pi_N$ such that their occupation variables satisfy $\eta_N^n(\se)\leq\eta_N(\se)$ for every edge $\se$ of the strip. 
%We then consider the interacting particle systems on a horizontal path related to those six-vertex models.
%We can see that they are finite ergodic Markov chains, hence as time (vertical coordinate) goes to infinity they converge to their  stationary measures $\mu_N^n$ and $\mu_N$. 
In particular 
$\mathbb{E}_{\mu_N^n}\rho\leq\mathbb{E}_{\mu_N}\rho$ for every $n,N\in\mathbb{Z}_+$. 
Since the parameters $(a_n,b_n,c_n,d_n,\th_1,\th_2)$ satisfy technical condition \eqref{eq:technical condition},
we have shown that $\lim_{N\rightarrow\infty}\mathbb{E}_{\mu_N^n}\rho=\rho(A_n,C_n)$.
Hence $\varliminf_{N\rightarrow\infty}\mathbb{E}_{\mu_N}\rho\geq\lim_{N\rightarrow\infty}\mathbb{E}_{\mu_N^n}\rho=\rho(A_n,C_n)$.
We then take $n\rightarrow\infty$ and get $\varliminf_{N\rightarrow\infty}\mathbb{E}_{\mu_N}\rho\geq\lim_{N\rightarrow\infty}\rho(A_n,C_n)=\rho(A,C)$. 
By the same argument we also have 
$\varlimsup_{N\rightarrow\infty}\mathbb{E}_{\mu_N}\rho\leq\rho(A,C)$.
Hence $\lim_{N\rightarrow\infty}\mathbb{E}_{\mu_N}\rho=\rho(A,C)$.
\end{proof}

\begin{proposition}[Two-species attractive coupling] \label{prop:coupling}
    Suppose  $a,b,c,d,a',b',c',d',\th_1,\th_2\in(0,1)$ such that
    $$a\leq a',\quad b\geq b',\quad c\geq c',\quad d\leq d',\quad a'+c<1,\quad b+d'<1.$$
    Consider two six-vertex models $\Pi_1,\Pi_2$ on a strip with the same width $N$, with parameters respectively $(a,b,c,d,\th_1,\th_2)$ and $(a',b',c',d',\th_1,\th_2)$. Denote the occupation variables of $\Pi_1$ and $\Pi_2$ by $\eta_{1}(\se)\in\left\{0,1\right\}$ and $\eta_{2}(\se)\in\left\{0,1\right\}$, where $\se$ runs through edges of the strip. 
    
    Suppose that $\eta_{1}(\se)\leq\eta_{2}(\se)$ for any edge $\se$ in the initial condition (outgoing edge of a down-right path $\hP$). Then there exists a coupling of $\Pi_1$ and $\Pi_2$ such that $\eta_{1}(\se)\leq\eta_{2}(\se)$ for any edge $\se$.
\end{proposition}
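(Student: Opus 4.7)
My plan is to build the coupling by running the sequential vertex-sampling procedure of both $\Pi_1$ and $\Pi_2$ simultaneously, in lockstep, using a shared source of randomness at each vertex. By induction on the vertex-sampling order of subsection \ref{subsec:definition of the model}, I would maintain the invariant that $\eta_1(\se) \leq \eta_2(\se)$ on every edge already determined. The invariant holds initially by hypothesis on the outgoing configuration of $\hP$, and to make the inductive step work it suffices, at every vertex type, to exhibit a coupling of the single-vertex sampling kernels of $\Pi_1$ and $\Pi_2$ whose outputs are componentwise ordered whenever the inputs are.

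For bulk vertices, the two models share the same parameters $(\th_1,\th_2)$, so the required attractive coupling is the standard `basic coupling' of the stochastic six-vertex model: drawing a single $U\sim\mathrm{Unif}[0,1]$ and using the same threshold for both vertices preserves monotonicity in the inputs whenever $\th_1\leq\th_2$, which holds here. Concretely, one checks the $\binom{4}{2}$ comparable input pairs against the vertex weights \eqref{bulk_vertex_weights} and verifies that the thresholded outputs are ordered; this is well-known from attractive couplings of the stochastic six-vertex model on full or half space (see for example \cite{Amol_Convergence}).

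For the left boundary vertex, the conditional distribution of the outgoing edges given the already-sampled incoming edge uses parameters $(b,d)$ in $\Pi_1$ and $(b',d')$ in $\Pi_2$, and I would check for each pair of comparable inputs $(\iota_1,\iota_2)$ with $\iota_1\leq\iota_2$ that the two output distributions defined by \eqref{left_vertex_weights} can be jointly realized with componentwise-ordered samples. This reduces to a small system of linear inequalities on the entries of the coupling matrix, and direct enumeration using $b\geq b'$ and $d\leq d'$, together with $b+d'<1$ (which guarantees positivity of certain conditional probabilities appearing as coupling weights), produces the coupling. The right boundary is handled symmetrically from \eqref{right_vertex_weights} using $a\leq a'$, $c\geq c'$, and $a'+c<1$.

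The main obstacle is the boundary case analysis: one must verify that for every pair of ordered inputs the two single-vertex kernels can be simultaneously realized with ordered outputs, and the less obvious assumptions $a'+c<1$ and $b+d'<1$ enter exactly here to rule out boundary configurations that would force an order inversion (intuitively, they prevent a simultaneous `creation' in $\Pi_1$ and `annihilation' in $\Pi_2$ at the same boundary vertex that would swap the order). Once these finite case-checks are complete, the global coupling is assembled by sequential composition over the vertices of $\U(\hP,\hQ)$, the invariant propagates to every edge of the strip, and the proposition follows.
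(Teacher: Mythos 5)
Your proposal is correct and is essentially the paper's proof in different packaging: the paper realizes your vertex-by-vertex monotone coupling as an explicit two-species (colored) six-vertex model on $\left\{0,1,2\right\}$ whose local weights are exactly the coupling matrices you describe, with $a'+c<1$ and $b+d'<1$ guaranteeing nonnegativity of the boundary weights $1-c'-a$ and $1-b'-d$ (equivalently, the stochastic dominances $a\le 1-c'$ and $d\le 1-b'$ in your cross-input cases). One small correction: no condition $\th_1\le\th_2$ is needed for the bulk step, since the two models share bulk parameters --- for ordered and distinct inputs at a bulk vertex one of the two outputs is deterministic (either $(1,1)$ above or $(0,0)$ below), and for equal inputs shared randomness gives equal outputs, so attractivity in the bulk is automatic.
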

\begin{proof}
    We construct a six-vertex model on the strip $\Pi$ with arrows in two colors $1,2$, and we use $0$ to denote the absence of an arrow. We then show that two marginals of this model equal respectively $\Pi_1$ and $\Pi_2$. Denote the occupation variable of $\Pi$ by $\eta(\se)\in\left\{0,1,2\right\}$, where $\se$ runs through edges of the strip. The initial condition of $\Pi$ is defined by 
    $\eta(\se)=\eta_1(\se)+\eta_2(\se)$
    for all initial edges $\se$.
    
    The sampling dynamics of the model $\Pi$ is defined by the following vertex weights:
    \begin{enumerate}
        \item [$\bullet$] Bulk weights: For every $0\leq i\leq 2$ we have
        $$\PP\lb\raisebox{-14pt}{\begin{tikzpicture}[scale=0.3]
		\draw[thick] (-1,0) -- (1,0);
		\draw[thick] (0,-1) -- (0,1);
            \node[above] at (0,1) {\tiny $i$};
            \node[below] at (0,-1) {\tiny $i$};
            \node[left] at (-1,0) {\tiny $i$};
            \node[right] at (1,0) {\tiny $i$};
		\end{tikzpicture}}\rb=1.$$
  For every pair of $0\leq i<j\leq 2$ we have
  $$\PP\lb\raisebox{-14pt}{\begin{tikzpicture}[scale=0.3]
		\draw[thick] (-1,0) -- (1,0);
		\draw[thick] (0,-1) -- (0,1);
            \node[above] at (0,1) {\tiny $i$};
            \node[below] at (0,-1) {\tiny $i$};
            \node[left] at (-1,0) {\tiny $j$};
            \node[right] at (1,0) {\tiny $j$};
		\end{tikzpicture}}\rb=\th_2,   \quad
  \PP\lb\raisebox{-14pt}{\begin{tikzpicture}[scale=0.3]
		\draw[thick] (-1,0) -- (1,0);
		\draw[thick] (0,-1) -- (0,1);
            \node[above] at (0,1) {\tiny $j$};
            \node[below] at (0,-1) {\tiny $i$};
            \node[left] at (-1,0) {\tiny $j$};
            \node[right] at (1,0) {\tiny $i$};
		\end{tikzpicture}}\rb=1-\th_2,  $$$$
  \PP\lb\raisebox{-14pt}{\begin{tikzpicture}[scale=0.3]
		\draw[thick] (-1,0) -- (1,0);
		\draw[thick] (0,-1) -- (0,1);
            \node[above] at (0,1) {\tiny $j$};
            \node[below] at (0,-1) {\tiny $j$};
            \node[left] at (-1,0) {\tiny $i$};
            \node[right] at (1,0) {\tiny $i$};
		\end{tikzpicture}}\rb=\th_1,  \quad
  \PP\lb\raisebox{-14pt}{\begin{tikzpicture}[scale=0.3]
		\draw[thick] (-1,0) -- (1,0);
		\draw[thick] (0,-1) -- (0,1);
            \node[above] at (0,1) {\tiny $i$};
            \node[below] at (0,-1) {\tiny $j$};
            \node[left] at (-1,0) {\tiny $i$};
            \node[right] at (1,0) {\tiny $j$};
		\end{tikzpicture}}\rb=1-\th_1.    $$
  \item[$\bullet$] Right boundary weights:
  $$\PP\lb\raisebox{-14pt}{\begin{tikzpicture}[scale=0.5]
		\draw[thick] (0,0) -- (0,1);
		\draw[thick] (-1,0) -- (0,0);
            \node[left] at (-1,0) {\tiny$0$};
            \node[above] at (0,1) {\tiny$0$};
		\end{tikzpicture}}\rb=1-d',\quad
  \PP\lb\raisebox{-14pt}{\begin{tikzpicture}[scale=0.5]
		\draw[thick] (0,0) -- (0,1);
		\draw[thick] (-1,0) -- (0,0);
            \node[left] at (-1,0) {\tiny$0$};
            \node[above] at (0,1) {\tiny$1$};
		\end{tikzpicture}}\rb=d'-d,\quad
  \PP\lb\raisebox{-14pt}{\begin{tikzpicture}[scale=0.5]
		\draw[thick] (0,0) -- (0,1);
		\draw[thick] (-1,0) -- (0,0);
            \node[left] at (-1,0) {\tiny$0$};
            \node[above] at (0,1) {\tiny$2$};
		\end{tikzpicture}}\rb=d,$$

    $$\PP\lb\raisebox{-14pt}{\begin{tikzpicture}[scale=0.5]
		\draw[thick] (0,0) -- (0,1);
		\draw[thick] (-1,0) -- (0,0);
            \node[left] at (-1,0) {\tiny$1$};
            \node[above] at (0,1) {\tiny$0$};
		\end{tikzpicture}}\rb=b',\quad
  \PP\lb\raisebox{-14pt}{\begin{tikzpicture}[scale=0.5]
		\draw[thick] (0,0) -- (0,1);
		\draw[thick] (-1,0) -- (0,0);
            \node[left] at (-1,0) {\tiny$1$};
            \node[above] at (0,1) {\tiny$1$};
		\end{tikzpicture}}\rb=1-b'-d,\quad
  \PP\lb\raisebox{-14pt}{\begin{tikzpicture}[scale=0.5]
		\draw[thick] (0,0) -- (0,1);
		\draw[thick] (-1,0) -- (0,0);
            \node[left] at (-1,0) {\tiny$1$};
            \node[above] at (0,1) {\tiny$2$};
		\end{tikzpicture}}\rb=d,$$

    $$\PP\lb\raisebox{-14pt}{\begin{tikzpicture}[scale=0.5]
		\draw[thick] (0,0) -- (0,1);
		\draw[thick] (-1,0) -- (0,0);
            \node[left] at (-1,0) {\tiny$2$};
            \node[above] at (0,1) {\tiny$0$};
		\end{tikzpicture}}\rb=b',\quad
  \PP\lb\raisebox{-14pt}{\begin{tikzpicture}[scale=0.5]
		\draw[thick] (0,0) -- (0,1);
		\draw[thick] (-1,0) -- (0,0);
            \node[left] at (-1,0) {\tiny$2$};
            \node[above] at (0,1) {\tiny$1$};
		\end{tikzpicture}}\rb=b-b',\quad
  \PP\lb\raisebox{-14pt}{\begin{tikzpicture}[scale=0.5]
		\draw[thick] (0,0) -- (0,1);
		\draw[thick] (-1,0) -- (0,0);
            \node[left] at (-1,0) {\tiny$2$};
            \node[above] at (0,1) {\tiny$2$};
		\end{tikzpicture}}\rb=1-b.$$
  \item[$\bullet$] Left boundary weights:
    $$\PP\lb\raisebox{-14pt}{\begin{tikzpicture}[scale=0.5]
		\draw[thick] (0,-1) -- (0,0);
		\draw[thick] (0,0) -- (1,0);
            \node[below] at (0,-1) {\tiny$0$};
            \node[right] at (1,0) {\tiny$0$};
		\end{tikzpicture}}\rb=1-a',\quad
  \PP\lb\raisebox{-14pt}{\begin{tikzpicture}[scale=0.5]
		\draw[thick] (0,-1) -- (0,0);
		\draw[thick] (0,0) -- (1,0);
            \node[below] at (0,-1) {\tiny$0$};
            \node[right] at (1,0) {\tiny$1$};
		\end{tikzpicture}}\rb=a'-a,\quad
  \PP\lb\raisebox{-14pt}{\begin{tikzpicture}[scale=0.5]
		\draw[thick] (0,-1) -- (0,0);
		\draw[thick] (0,0) -- (1,0);
            \node[below] at (0,-1) {\tiny$0$};
            \node[right] at (1,0) {\tiny$2$};
		\end{tikzpicture}}\rb=a,$$
  $$\PP\lb\raisebox{-14pt}{\begin{tikzpicture}[scale=0.5]
		\draw[thick] (0,-1) -- (0,0);
		\draw[thick] (0,0) -- (1,0);
            \node[below] at (0,-1) {\tiny$1$};
            \node[right] at (1,0) {\tiny$0$};
		\end{tikzpicture}}\rb=c',\quad
  \PP\lb\raisebox{-14pt}{\begin{tikzpicture}[scale=0.5]
		\draw[thick] (0,-1) -- (0,0);
		\draw[thick] (0,0) -- (1,0);
            \node[below] at (0,-1) {\tiny$1$};
            \node[right] at (1,0) {\tiny$1$};
		\end{tikzpicture}}\rb=1-c'-a,\quad
  \PP\lb\raisebox{-14pt}{\begin{tikzpicture}[scale=0.5]
		\draw[thick] (0,-1) -- (0,0);
		\draw[thick] (0,0) -- (1,0);
            \node[below] at (0,-1) {\tiny$1$};
            \node[right] at (1,0) {\tiny$2$};
		\end{tikzpicture}}\rb=a,$$
  $$\PP\lb\raisebox{-14pt}{\begin{tikzpicture}[scale=0.5]
		\draw[thick] (0,-1) -- (0,0);
		\draw[thick] (0,0) -- (1,0);
            \node[below] at (0,-1) {\tiny$2$};
            \node[right] at (1,0) {\tiny$0$};
		\end{tikzpicture}}\rb=c',\quad
  \PP\lb\raisebox{-14pt}{\begin{tikzpicture}[scale=0.5]
		\draw[thick] (0,-1) -- (0,0);
		\draw[thick] (0,0) -- (1,0);
            \node[below] at (0,-1) {\tiny$2$};
            \node[right] at (1,0) {\tiny$1$};
		\end{tikzpicture}}\rb=c-c',\quad
  \PP\lb\raisebox{-14pt}{\begin{tikzpicture}[scale=0.5]
		\draw[thick] (0,-1) -- (0,0);
		\draw[thick] (0,0) -- (1,0);
            \node[below] at (0,-1) {\tiny$2$};
            \node[right] at (1,0) {\tiny$2$};
		\end{tikzpicture}}\rb=1-c.$$
    \end{enumerate}
    
We can check that two marginals 
$$\eta_1(\se):=1_{\eta(\se)\geq 2},\quad\eta_2(\se):=1_{\eta(\se)\geq 1},\quad\forall\text{ edge }\se \text{ on the strip }$$ 
of the above model $\Pi$ coincide respectively with $\Pi_1$ and $\Pi_2$. It is immediate that $\eta_1(\se)\leq\eta_2(\se)$.
\end{proof}
\appendix\section{Details in the proof of Theorem \ref{thm:main theorem section 3}}
%In this appendix we provide additional details in the proof of Theorem \ref{thm:main theorem section 3}. 
\subsection{Proofs of \texorpdfstring{\eqref{eq:partition function high density} and \eqref{eq:partition function derivative high density}}{} in the high density phase}\label{section:appendix one}
 On high density phase $\tA>1$, when $t$ is in a small interval of $1$, the measure $\nu(dy,\ttA,\ttB,\ttC,\ttD,q)$ has atoms 
$y_0(t)>\dots>y_{\ell-1}(t)>1$ generated by $\ttA$ and also possibly some atoms $-1>y_{\ell}(t)>\dots>y_{k}(t)$ generated by $\ttD$, where $k\geq \ell-1\geq 0$.
%$$y_0(t)>\dots>y_{\ell-1}(t)>1>-1>y_{\ell}(t)>\dots>y_{k}(t)$$
%where $k\geq \ell-1\geq 0$,
%and $y_i(t)=\frac{1}{2}\lb q^i\tA\sqrt{t}+\frac{1}{q^i\tA\sqrt{t}}\rb$ for $i=0,\dots,\ell-1$ are atoms generated by $\ttA$ and $y_{\ell+i}(t)=\frac{1}{2}\lb\frac{q^{i}\tD}{\sqrt{t}}+\frac{\sqrt{t}}{q^i\tD}\rb$ for $i=0,\dots,k-\ell$ are possible atoms generated by $\ttD$. 
 We take $t=1$ in $\zt$ given by \eqref{eq:partition function} and get:

\begin{equation*} 
\begin{split}
        \zn=&(1+r+2\sqrt{r}y_0(1))^Np(y_0(1);\tA,\tB,\tC,\tD,q)\\
        &+\sum_{j=1}^{k}(1+r+2\sqrt{r}y_j(1))^Np(y_j(1);\tA,\tB,\tC,\tD,q)+\int_{-1}^1(1+r+2\sqrt{r}y)^N\pi_1(y)dy,
\end{split}
\end{equation*}
where $\pi_t(y)$ is defined in \eqref{eq:definition of pi t y}.
%Note that atoms $y_0(1)>y_1(1)>\dots$ lie in the strictly positive and strictly increasing interval $(-\frac{1+r}{2\sqrt{r}},\infty)$ of $(1+r+2\sqrt{r}y)^N$.
The first term divided by $(1+r+2\sqrt{r}y_0(1))^N$ is a positive constant independent of $N$. By Fact \ref{fact in proof discrete}, every other terms divided by $(1+r+2\sqrt{r}y_0(1))^N$ goes to $0$ as $N\rightarrow\infty$.
 Hence we obtain \eqref{eq:partition function high density}:
$$\zn\sim (1+r+2\sqrt{r}y_0(1))^Np(y_0(1),\tA,\tB,\tC,\tD,q).$$

Next we take the derivative of $\zt$ at $t=1$.  
We have:
\begin{equation*} 
\begin{split}
        &\pt\zt\po\\=& N\pt(1+rt+2\sqrt{rt}y_0(t))\po(1+r+2\sqrt{r}y_0(1))^{N-1} p(y_0(1);\tA,\tB,\tC,\tD,q)\\
        &+(1+r+2\sqrt{r}y_0(1))^N \pt p(y_0(t);\ttA,\ttB,\ttC,\ttD,q)\po\\
        &+\sum_{j=1}^{k}N\pt(1+rt+2\sqrt{rt}y_j(t))\po(1+r+2\sqrt{r}y_j(1))^{N-1} p(y_j(1);\tA,\tB,\tC,\tD,q)\\
        &+\sum_{j=1}^{k}(1+r+2\sqrt{r}y_j(1))^N \pt p(y_j(t);\ttA,\ttB,\ttC,\ttD,q)\po\\
        &+\int_{-1}^1 N\pt(1+rt+2\sqrt{rt}y)\po (1+r+2\sqrt{r}y)^{N-1}\pi_1(y)dy\\
        &+\int_{-1}^1(1+r+2\sqrt{r}y)^N\pt\pi_t(y)\po dy.
\end{split}
\end{equation*}
The first term divided by $N(1+r+2\sqrt{r}y_0(1))^{N}$ is a positive constant independent of $N$, and every other terms divided by $N(1+r+2\sqrt{r}y_0(1))^{N}$ goes to $0$ as $N\rightarrow\infty$.
Hence we obtain \eqref{eq:partition function derivative high density}:
$$\pt\zt\po\sim N\pt(1+rt+2\sqrt{rt}y_0(t))\po(1+r+2\sqrt{r}y_0(1))^{N-1}p(y_0(1),\tA,\tB,\tC,\tD,q).$$
 \subsection{Proof of \texorpdfstring{\eqref{eq:partition function maximal current} and \eqref{eq:partition function derivative maximal current}}{} in the maximal current phase}\label{section:appendix two}
On the maximal current phase $\tA<1$, $\tC<1$, when $t$ is close to $1$ the measure $\nu(dy,\ttA,\ttB,\ttC,\ttD,q)$ can only have possible atoms $-1>y_0(t)>\dots>y_{k}(t)$ generated by $\ttD$,
where $k\geq -1$. 
%and $y_i(t)=\frac{1}{2}\lb\frac{q^{i}\tD}{\sqrt{t}}+\frac{\sqrt{t}}{q^i\tD}\rb$ for $i=0,\dots,k$.
 We have:
$$\zn=\int_{-1}^1(1+r+2\sqrt{r}y)^N\pi_1(y)dy+\sum_{j=0}^k(1+r+2\sqrt{r}y_j(1))^Np(y_j(1);\tA,\tB,\tC,\tD,q)=C_1+C_2.$$
Observe that the density $\pi_1(y)$ can only take zero at $y=-1,1$.
When $N\rightarrow\infty$, the integral $C_1$ divided by $(1+r-2\sqrt{r})^N$ is bounded below by $\int_{-1}^1\pi_1(y)dy>0$,
%goes to a number in $(0,\infty]$, 
and each summand in  $C_2$ divided by $(1+r-2\sqrt{r})^N$
 goes to $0$. We obtain \eqref{eq:partition function maximal current}:
$$\zn\sim\int_{-1}^1(1+r+2\sqrt{r}y)^N\pi_1(y)dy.$$
Next we evaluate $\pt\zt\po$ and look at $\frac{\pt\zt\po}{N\zn}$ as $N\rightarrow\infty$:
\begin{equation*}
    \begin{split}
        &\pt\zt\po\\=&\int_{-1}^1 N(r+\sqrt{r}y) (1+r+2\sqrt{r}y)^{N-1}\pi_1(y)dy
        +\int_{-1}^1(1+r+2\sqrt{r}y)^N\pt\pi_t(y)\po dy\\
        &+\sum_{j=0}^{k}N\pt(1+rt+2\sqrt{rt}y_j(t))\po(1+r+2\sqrt{r}y_j(1))^{N-1} p(y_j(1);\tA,\tB,\tC,\tD,q)\\
        &+\sum_{j=0}^{k}(1+r+2\sqrt{r}y_j(1))^N \pt p(y_j(t);\ttA,\ttB,\ttC,\ttD,q)\po\\
        =&D_1+D_2+D_3+D_4.
    \end{split}
\end{equation*}
Note that $\zn\geq\int_{-1}^1(1+r+2\sqrt{r}y)^N\pi_1(y)dy.$
When $N\rightarrow\infty$, $N\zn$ divided by $N(1+r-2\sqrt{r})^N$ is bounded below by $\int_{-1}^1\pi_1(y)dy>0$, and each summand in $D_3$ and $D_4$ divided by $N(1+r-2\sqrt{r})^N$ goes to $0$.
%each summand in $D_3$ and $D_4$ quotients $(1+r-2\sqrt{r})^N$ goes to $0$, 
%and $N\zn$ divided by $N(1+r-2\sqrt{r})^N$ is bounded below by $\int_{-1}^1\pi_1(y)dy>0$. 
For the integral $D_2$, observe that 
by Fact \ref{fact in the proof continuous} we have $|\pt \pi_t(y)\po|\leq M\sqrt{1-y^2}$ and $\pi_1(y)>\ep\sqrt{1-y^2}$ for some $M,\ep>0$. Hence on $y\in[-1,1]$ we have
$$\left\vert(1+r+2\sqrt{r}y)^N\pt\pi_t(y)\po\right\vert\leq \frac{M}{\ep}(1+r+2\sqrt{r}y)^N\pi_1(y).$$
 Integrate over $[-1,1]$, we get
$$\int_{-1}^1|(1+r+2\sqrt{r}y)^N\pt\pi_t(y)\po| dy\leq\frac{M}{\ep}\int_{-1}^1(1+r+2\sqrt{r}y)^N\pi_1(y)dy.$$
Hence
$$\left\vert\frac{D_2}{N\zn}\right\vert\leq\left\vert\frac{\int_{-1}^1(1+r+2\sqrt{r}y)^N\pt\pi_t(y)\po dy}{N\int_{-1}^1(1+r+2\sqrt{r}y)^N\pi_1(y)dy}\right\vert\leq \frac{M/\ep}{N}\rightarrow 0.$$
Hence we obtain \eqref{eq:partition function derivative maximal current}:
$$\frac{\pt\zt\po}{N\zn}\sim\frac{D_1}{N\zn}=\frac{\int_{-1}^1(1+r+2\sqrt{r}y)^{N-1}(r+\sr y)\pi_1(y)dy}{\int_{-1}^1(1+r+2\sqrt{r}y)^N\pi_1(y)dy}.$$

\end{document}